\documentclass{amsart}
\usepackage{amssymb}
\usepackage{amsmath}
\usepackage{amscd}
\usepackage{amsthm}
\usepackage{url}
\usepackage{etoolbox}
\usepackage{mathtools}
\usepackage{longtable}
\usepackage{hyperref}
\usepackage{xypic}

\newtheorem{prop}[equation]{Proposition}
\newtheorem{prop*}[equation]{Proposition*}

\newtheorem{lem}[equation]{Lemma}
\newtheorem{lem*}[equation]{Lemma*}
\newtheorem{thm}[equation]{Theorem}
\newtheorem{thm*}[equation]{Theorem*}
\newtheorem{cor}[equation]{Corollary}
\theoremstyle{definition}
\newtheorem{assumption}[equation]{Assumption}
\newtheorem{exo}[equation]{Example}
\newtheorem{remar}[equation]{Remark}
\numberwithin{equation}{section}
\preto{\table}{\stepcounter{equation}}

\newcommand{\ord}{{\mathrm {ord}}}

\newcommand{\Hom}{{\mathrm {Hom}}}

\newcommand{\Fil}{{\mathrm {Fil}}}
\newcommand{\Nm}{{\mathrm {Nm}}}

\newcommand{\cris}{{\mathrm {cris}}}

\newcommand{\dR}{{\mathrm {dR}}}

\newcommand{\et}{\text{\'et}}

\newcommand{\Frob}{{\mathrm {Frob}}}

\newcommand{\Gal}{\mathrm {Gal}}
\newcommand{\Spec}{{\mathrm {Spec}}}

\newcommand{\res}{{\mathrm {res}}}

\newcommand{\Tam}{{\mathrm {Tam}}}

\newcommand{\reg}{{\mathrm {reg}}}

\newcommand{\CC}{{\mathbb C}}
\newcommand{\C}{{\mathbb C}}
\newcommand{\RR}{{\mathbb R}}
\newcommand{\R}{{\mathbb R}}
\newcommand{\PP}{{\mathbb P}}
\newcommand{\QQ}{{\mathbb Q}}
\newcommand{\Q}{{\mathbb Q}}
\newcommand{\ZZ}{{\mathbb Z}}
\newcommand{\Z}{{\mathbb Z}}

\newcommand{\EE}{{\mathcal E}}

\newcommand{\Chi}{{\mathcal X}}

\newcommand{\AAA}{{\mathcal A}}
\newcommand{\CCC}{{\mathcal C}}
\newcommand{\PPP}{{\mathcal P}}

\newcommand{\MMM}{{\mathcal M}}

\newcommand{\OOO}{{\mathcal O}}

\newcommand{\DDD}{{\mathcal D}}
\newcommand{\YYY}{{\mathcal Y}}

\newcommand{\FF}{{\mathbb F}}

\newcommand{\ch}{\mathrm {ch}}

\newcommand{\Qbar}{\overline{\mathbb Q}}

\newcommand{\CH}{\mathrm{CH}}

\def\tf{\textup{tf}}
\def\tor{\textup{tor}}
\def\unr{\textup{unr}}
\def\INT #1 {K_2^T(#1)_{\textup{int}}}
\def\et{\textup{\'et}}
\def\het{H_{\et}}
\def\symb #1 #2 {\left\{ #1 , #2 \right\}}
\def\F{\mathbb{F}}
\def\Tate{\textup{Tate}} % to avoid confusion with $ T_\ell $, etc.
\def\VTate#1{V\textup{Tate}_{#1}} % to match \Tate

\def\vph{\vphantom{$b^{b^{b^b}}p_{p_{p_{p_p}}}$}} % for vertical space in tables
\def\vphs{\vphantom{$b^{b^{b}}p_{p}$}} % for vertical space in tables

\DeclareFontFamily{U}{wncy}{}
    \DeclareFontShape{U}{wncy}{m}{n}{<->wncyr10}{}
    \DeclareSymbolFont{mcy}{U}{wncy}{m}{n}
    \DeclareMathSymbol{\Sh}{\mathord}{mcy}{"58} 

\begin{document}
\title[The 2-part of the Bloch-Kato conjecture for some elliptic curves]{\boldmath The 2-part of the Bloch-Kato conjecture,
and indivisibility results, for $ K_2 $ of some elliptic curves}
\author{Neil Dummigan}
\address{University of Sheffield\\ School of Mathematical and Physical Sciences\\
Hicks Building\\ Hounsfield Road\\ Sheffield, S3 7RH\\
United Kingdom}
\email{n.p.dummigan@shef.ac.uk}
\author{Vasily Golyshev}
\address{PRIMI (Polo di Ricerca Interuniversitario in Matematica e Informatica)
and IGAP (Institute for Geometry and Physics), c/o Math Section of the ICTP, Strada Costiera~11, Trieste~34151, Italy}\email{vasily.v.golyshev@gmail.com}
\author{Rob de Jeu}
\address{Faculteit der B\`etawetenschappen\\Afdeling Wiskunde\\Vrije Universiteit Amsterdam\\De Boelelaan~1111\\1081 HV Amsterdam\\The Netherlands}\email{r.m.h.de.jeu@vu.nl}
\author{Matt Kerr}
\address{Department of Mathematics, Washington University in St. Louis, 1 Brookings Drive, St.~Louis, MO 63132, USA}\email{matkerr@wustl.edu}

\thanks{Matt Kerr gratefully acknowledges support from the Simons Foundation and from NSF Grant DMS-2502708.}

\date{8th May, 2026}

\begin{abstract}
For certain integers~$ u $, we investigate the 2-part of the Bloch-Kato conjecture
for $L(E_u,2)$, where~$E_u: y^2=x(x+1)(x+u^2)$ is part of
a (twisted) Legendre family that is 2-isogenous to a family studied by Boyd \cite{Bo}.
For this, we first work out the corresponding 2-parts of the Tamagawa factors
and Galois invariants.
Then we give an explicit description of the 2-torsion in the Selmer group $
H_f^1(\Q, E_u[2^\infty](-1) ) $.
We construct a specific element in the kernel of the tame symbol
for~$K_2$ on an integral model of~$ E_u $, with non-vanishing real and 2-adic regulators.
Using techniques involving the norm residue isomorphism of Merkur'ev-Suslin,
we prove indivisibility of this element by~2 in that kernel, even modulo torsion,
even though it is explicitly divisible by~2 in the kernel of
the tame symbol for~$ K_2 $ on~$ E_u $.
We also bound the 2-divisibility of the images of these elements under the 2-adic regulator
map.
Finally, in many cases we investigate numerically the validity
of the 2-part of the Bloch-Kato conjecture.
\end{abstract}

\subjclass[2010]{11G40, 19F27}

\keywords{$ K_2 $, curve, tame symbol, regulator, Bloch-Kato conjecture, indivisibility}

\maketitle

\section{Introduction}
In this paper, we study the family of elliptic curves defined
by
\begin{equation} \label{Eueq}
E_u : y^2 = x (x+1) (x+u^2)
\qquad
(u \text{ in } \Q \text{ with } u^2 \ne 0,1)
.
\end{equation}
It is 2-isogenous to the family defined by $ X + X^{-1} + Y + Y^{-1} = 4 u $
in \cite{Bo} (see Proposition~\ref{boyd-isogeny}).

In \emph{Disquisitiones Arithmeticae}, Gauss merges
proper equivalence classes of binary quadratic forms of discriminant $D$ according to a coarser equivalence relation. Let $F$ be a primitive form with discriminant $D$
and $p$ a prime number dividing~$D$. Then the numbers not divisible by $p$
that can be represented by the form $F$ agree in that they are either
all quadratic residues of $p$, or all non-residues. Two classes are said to belong to the same genus if, for every $p|D$, (forms in) those classes agree on whether they represent residues or non-residues. He then introduces a composition law on the set of proper equivalence classes, turning it into an Abelian group, and shows that the composition law respects  genus equivalence.  

The main theorem (Article 261) in that part of \it Disquisitiones \rm states that `half of all the assignable characters for a
positive non-square determinant can correspond to no properly
primitive genus and, if the determinant is negative, to no properly
primitive positive
genus'. To rephrase this in modern language,  let $K$ be a quadratic extension of $\Q$, of discriminant $D$ and ramified at $r$ primes.
Then the $2$-torsion subgroup of the narrow class group, $\mathrm{Cl}^+(\mathcal{O}(K))[2]$, is isomorphic to $ (\Z/2\Z)^{r-1}$,
and is generated by the classes of the ramified primes
of~$ K $~\cite[p.181]{F-T}. One can think of generalizing this result directly by considering higher
degree extensions~\cite{BC}, or changing the base field~\cite{Beau}, or both.

We shall pursue a different analogy, viewing a quadratic field as a kind of $0$-dimensional Calabi-Yau variety, analogous to a cubic curve, a quartic surface or a quintic threefold. The order of its class group,
which occurs as a factor in the leading term at $s=0$ of its Dedekind zeta function,
is influenced by the number of primes of `bad reduction'.
We shall explore how the $2$-parts of various factors, appearing in the Bloch-Kato conjecture for~$L(E_u, 2)$, are influenced by primes of bad reduction.

Let us work out two examples in parallel. Consider first the family of quadratic extensions of the form~$K_u=\Q (\sqrt d)$, where
$ u \ge 5 $ is an integer and $d=u^2-4$ is squarefree.
Let $x>1$ satisfy~$x^2-ux+1=0$. We remark that $x$ is a fundamental unit
of~$ \mathcal{O}(K_u) $
by \cite[Satz~1]{Degert}. Its regulator $F_1(u) =\log x$ can be expressed analytically
as
$$ F_1(u) = \log(u)- \sum_{n=1}^\infty {2n  \choose n} \frac{u^{-2n}}{2n},$$
so that, by Dirichlet's theorem combined with the fact that  $\zeta (0)=-\frac12$,
\begin{equation*}
\frac{L'(\chi_d,0)}{F_1(u)} =
\# \mathrm{Cl} (\mathcal{O}(K_u)) = 
\frac 12 \# \mathrm{Cl^+}(\mathcal{O}(K_u))
.
\end{equation*}
The second equality follows from \cite[Chapter~V, (1.14)]{F-T}, since $ K_u  $
is a real quadratic field with fundamental unit of norm~1.

\medskip

This leads us  to the following key observations.
\begin{enumerate}
\item[(a)]
The 2-valuation of the $L$-ratio (i.e., the left-hand side) cannot go below a certain bound, because it is controlled by the order of~$ \mathrm{Cl}(\mathcal{O}(K_u)) $.
\item[(b)]
When the 2-valuation of the $L$-ratio is minimal then~$ \mathrm{Cl}(\mathcal{O}(K_u)) $ has trivial
2-torsion. In particular, then~$ \mathrm{Cl}^+(\mathcal{O}(K_u))[2] $
has order~2, so that, by Gauss, this can occur only
when~$u-2$ and $u+2$ are prime numbers.\footnote{More precisely, by Proposition~9.2 and Theorem~9.10 of~\cite{Lem21}, the 2-torsion in $ \mathrm{Cl}(\mathcal{O}(K_u)) $ is trivial precisely when~$ u-2 $ and~$ u+2 $ are prime numbers congruent to~3 modulo~4.}
(Think, for instance, of the case $u=105$,
with class number~7.)
\item[(c)]
The classes of the ramified primes generate~$ \mathrm{Cl}^+ (\mathcal{O}(K_u))[2] $,
and their images in~$ \mathrm{Cl} (\mathcal{O}(K_u))[2] $ generate
a subgroup\footnote{Corollary 1 to Theorem~39
of~\cite{F-T} is not correct as stated, since the map~$ \pi_N $ in~(1.8) there need not be surjective when restricted to 2-torsion. An example is $ K_{15} $,
with~$ \mathrm{Cl}^+ (\mathcal{O}(K_{15})) \simeq \Z/4\Z $ and~$ \mathrm{Cl} (\mathcal{O}(K_{15})) \simeq \Z/2\Z $.} of index~1 or~2; moreover, these classes
are subject to only one or two relations.
\item[(d)]
$x=(u+\sqrt{u^2-4})/2$ gives us an explicit fundamental unit.
\end{enumerate}

Let us consider an analogue in dimension 1 of the family~$\Q(\sqrt{u^2-4})$: the family of elliptic curves defined
by~\eqref{Eueq}, but for~$ u $ a positive integer with
$ u \equiv 4 $ modulo~8 and~$ \frac14 u(u^2-1) $ squarefree. All observations (a) through~(d) have (partly conjectural) counterparts in this elliptic curve case, but,
as we shall explain below, the analogy is imperfect, and there are important caveats and differences.

In order to obtain a regulator, first of all we need an element in the
kernel~$ \INT E_u $ of the tame symbol on an integral model of the curve,
a subgroup of the kernel of the tame symbol~$ K_2^T(E_u) $ on
the curve (see Sections~\ref{ellreg} and~\ref{K2elements}).
One can produce an explicit element~$ \alpha_u =\{v,w\}+\{-1,h\}$ in~$ K_2^T(E_u) $, with the property that $2\alpha_u$ (but not $\alpha_u$) is in~$ \INT E_u $ whenever $4u\in\mathbb{Z}$
(see Proposition~\ref{tameINT} and Remark~\ref{tamenotINTrem}).
Here $h=\frac{u(x+1)+y}{x+u}$, $v=\frac{x+u^2}{y}$ and $w=\frac{u-xv}{u+xv}$ are functions on~$E_u$ with divisors supported in the set of
its eight rational $4$-torsion points.

Under the 2-isogeny $\phi\colon E_u\to C_{4u}$ to Boyd's curve $X+Y+X^{-1}+Y^{-1}=4u$, given by $\phi^*(X,Y)=(-vw,v/w)$, $-2\alpha_u$ is the pullback of $\{X,Y\}|_{C_{4u}}\in K^T_2(C_{4u})$.
Reinterpreting the latter as a higher Chow cycle and applying Bloch's integral cycle class map (as realized in \cite{KLM,KL}), in Proposition \ref{reg-prop} we compute\footnote{This sort of computation has a long history (see, for instance, \cite{V} and \cite[\S4]{DK}). The main difference is that here we work integrally.}
their regulators as $F(u):=\frac{1}{2\pi i}\int_{\gamma^-}\mathrm{reg}(\alpha_u)=\frac{1}{2\pi i}\int_{\gamma_C^-}\mathrm{reg}(\{X,Y\}|_{C_{4u}})$ for $u\in \mathbb{R}_{\geq 1}$, where~$\gamma^-$, resp.~$\gamma_C^-$, generate the
subgroups of  anti-invariants
for the action of complex conjugation on~$H_1(E(\mathbb{C}),\mathbb{Z})^-$, resp.~$H_1(C_{4u}(\mathbb{C}),\mathbb{Z})^-$.
The resulting function 
\begin{equation} \label{R2udef}
F(u) = \log(4u)- \sum_{n=1}^\infty {2n  \choose n}^2 \frac{(4u)^{-2n}}{2n}
\end{equation}
has several interesting properties:
\begin{itemize}
\item [(a)] $F(u)$ is positive for $u\geq 1$, and computes the Beilinson regulator if additionally $4u\in\mathbb{Z}$;
\item [(b)] $u\frac{d}{du}F(u)=\,_2F_1(\frac{1}{2},\frac{1}{2};1;\frac{1}{u^2})$ is $\frac{1}{2\pi}$ times the real period of $\frac{dx}{y}$ on the Legendre elliptic curve $y^2=x(x-1)(x-\frac{1}{u^2})$, which is isomorphic to the $(-1)$-twist of $E_u$; and
\item [(c)] $F(u)$ is equal to the logarithmic Mahler measure of the Laurent polynomials $4u\pm(X+Y+X^{-1}+Y^{-1})$.
\end{itemize}
The Beilinson conjecture predicts that~$ H^2_{\mathcal{M}}(E_u,\Q(2))_\Z : = \INT E_u \otimes_\Z \Q \simeq \Q $, which by (a) (and Proposition~\ref{tameINT}) is necessarily spanned by the image of $\alpha_u$ for $4u\in\mathbb{Z}_{\geq 4}$.
It further predicts that~$\frac{L'(E_u,0)}{F(u)}$, and hence, by the functional equation of the~$ L $-function, also~$ \frac{L(E_u,2)}{(2 \pi i)^2 F(u)} $, is in~$ \Q^\times $.
(From (c) and \cite{RZ}, we even get a direct \emph{proof} that this number is $-1/165$ for $u=4$; see Remark \ref{remark:u=4}.)

The Bloch-Kato conjecture interprets this number, roughly speaking, as an `adelic covolume' of the motive $h^1(E_u)(2)$.
Using~$ \ell $-adic regulator maps for all prime
numbers~$ \ell $, it defines a subgroup~$H^2_{\mathcal{M}}(E_u,\Q(2))_{\Z,T}$
of~$H^2_{\mathcal{M}}(E_u,\Q(2))_\Z $ that contains the image of~$ \INT E_u $.
This subgroup in our case is supposed to
be isomorphic to~$ \Z $, and~$ \Z \alpha_u $ would be of finite index~$ \iota_u $
in it. The Bloch-Kato conjecture, formulated at~$s=2$,
predicts the supposedly rational number~$ \frac{\iota_u L(E,2)}{(2 \pi i)^2 F(u)} $ up to sign.

\medskip

The situation may seem hopelessly different from Dirichlet's.
For example,
\begin{enumerate}
\item[(1)] it is not known whether the rank of $H^2_{\mathcal{M}}(E_u,\Q(2))_{\Z,T}$ is 1,
let alone the truth of the Beilinson conjecture or the Bloch-Kato conjecture\footnote{Although
for some curves~$ C $ the expected relation between~$ L(C,2) $ and the Beilinson
regulator holds for certain elements of~$ \INT C \otimes_\Z \Q $ (see, e.g.,
\cite{Be86, dewi88, den89} and the seminal~\cite{Bl1}), it is not
known if this vector space is even finite dimensional.
For results on a weak form of the Bloch-Kato conjecture for elliptic curves with
complex multiplication we refer to \cite[Theorem~1.1.5]{Ki1} and \cite[Proposition~7.4]{BK},
or \cite[Theorem~2.2.2]{Ki2} for a survey,
noting that~$ \ell = 2 $ and~3 are excluded.};

\item[(2)] even if that rank is 1, we do not know whether a generator,
for each~$E_u$, spreads across the family, which
would be needed for uniform behaviour.
\end{enumerate}

We prove at least some results in this direction.
In order to avoid technical statements, in this introduction we summarise
our main results
(Theorem~\ref{new-index-theorem}, Corollary~\ref{new-index-cor}, Corollary~\ref{2torscor},
and Theorem~\ref{BK2})
mostly under stronger assumptions on~$ u $,
leaving the more general results to be found in the body of the paper.

In particular, the next theorem shows that the image of~$ \alpha_u $
under the 2-adic regulator map is non-zero, and provides a 
hint towards the expected finite generation of~$ K_2^T(E_u) $ and~$ \INT E_u $
modulo torsion, by bounding the 2-power divisibility of~$ \alpha_u $ modulo
torsion. Part~(3) may be viewed as a partial counterpart to ``key observation''\
(d) that the explicit unit $(u+\sqrt{u^2-4})/2$ is fundamental.

\medskip

{\bf Theorem 1.}
Let~$ u $ be a positive integer congruent to~4 modulo~8, and
such that~$ \ord_p(u(u^2-1))$ for each odd prime is zero or odd.
Let~$ m_u = 1 $ if $ u+1 $ has a prime factor congruent to~3 modulo~4,
and~2 otherwise.
Then the following hold.
\begin{enumerate}
\item
The image of $ \alpha_u $ in~$ H^1(\Q, \het^1(E_{u, \Qbar} , \Z_2(2))) $ modulo torsion
under the 2-adic regulator map is not divisible by~$ 2^{m_u} $.

\item
$ \alpha_u $ is not in~$ 2^{m_u} K_2^T(E_u) + K_2^T(E_u)_\tor $.

\item
$ 2 \alpha_u $ is in~$ \INT E_u $ but not in~$ 2 \INT E_u + \INT E_u {}_{,\tor} $.
\end{enumerate}

Computationally, approximately~58.5\% of all~$ u \equiv 4 $ modulo~8
in~$ \{ 4 , \dots, 10^9 \} $ satisfy the conditions 
in the theorem (and for approximately~50.2\% of the~$ u $ in this
range the resulting~$ u (u^2-1)/4 $ is squarefree, as required in Proposition~\ref{Vu-prop}).

In order to weaken our assumptions, we concentrate on the 2-part
of the Bloch-Kato conjecture, in the following sense. We still
assume that $\frac{L'(E_u,0)}{F(u)}$ is a non-zero rational number, and that~$ \INT E_u \otimes_\Z \Q $ is 1-dimensional, but in
this 1-dimensional~$ \Q $-vector space define a rank one~$ \Z_{(2)} $-submodule
using only the~2-adic regulator map (see~\eqref{MMTell-def}).
(Note that~$ \Z_{(2)} $ is the localisation of~$ \Z $ at the maximal ideal~$ (2) $.)
For~$ u $ as in Theorem~1, $ \Z_{(2)} \alpha_u $ is then
of finite index~$ \iota_{u,2} $ in the rank one~$ \Z_{(2)} $-submodule, and~$ \iota_{u,2} $ divides~$ 2^{m_u-1} $.
In particular, if~$ m_u = 1 $ then~$ \iota_{u,2} = 1 $ and
$ \Z_{(2)} \alpha_u $ is the full submodule.
If the construction of Bloch-Kato leads to a rank~1 $ \Z $-lattice,
we have tensored it with~$ \Z_{(2)} $, and~$ \iota_{u,2} $
is the 2-part of~$ \iota_u $.

We emphasise that~$ 2 \alpha_u $ has 2-divisible image
under the 2-adic regulator map, but, by part~(3) of the above theorem,
is not itself 2-divisible in~$ \INT E_u $ modulo torsion.
Thus, from the point of view of~$ K $-theory it would be natural to
consider the~$ \Z_{(2)} $-module of~$ \INT E_u \otimes_\Z \Q $
generated by~$ 2 \alpha_u $, but
this is of non-trivial index~$ 2 \iota_{u,2} $ in the module
used in this formulation of the 2-part of the Bloch-Kato conjecture.

Recall part of ``key observation'' (c), that the classes of the ramified primes generate
a subgroup of index~1 or~2 in the 2-torsion in $\mathrm{Cl} (\mathcal{O}(K_u))$,
and that those classes are subject to only one or two relations. Viewing this
a different way helps us to understand in what sense this carries over
to~$E_u$ (where of course $u$ does not mean quite the same
thing). By class field theory, we may view~$\mathrm{Cl}^+ (\mathcal{O}(K_u)) $
as the Galois group~$ G^+ $ of the ray class field of~$K_u=\Q(\sqrt{d})$ with
modulus~$ \infty_1+\infty_2 $, and its homomorphisms to~$ \Z/2\Z $ are 
precisely the continuous homomorphisms~$\Gal(\Qbar/K_u) \to \Z/2\Z $ that
are unramified at every finite place. Using that~$ \Z/2\Z \simeq \{\pm1\} $, these can be described using
Kummer theory.

If~$ p $ is an (odd) prime dividing~$ d $, and $ p^* = \pm p $ is congruent to~1
modulo~4, then~$ \Q(\sqrt{p^*}) /\Q $ is unramified at all finite
primes other than~$ p $, and it is easy to see that~$ \Q_p(\sqrt d, \sqrt{p^*}) / \Q_p(\sqrt d) $
is also unramified. Thus~$ p^* $ defines a homomorphism~$ G^+ \to \Z/2\Z $,
unramified at every finite prime.
Because the kernel of the natural homomorphism~$ \Q^\times/2 \to K_u^\times/2 $
is generated by the class of~$ d $, these homomorphisms generate
exactly~$ \Hom(G^+, \Z/2\Z) \simeq (\Z/2\Z)^{r-1} $.
Under class field theory, the class group~$ \mathrm{Cl}(\mathcal{O}(K_u)) $
corresponds to the quotient group~$ G $ of~$ G^+ $ that is the
Galois group of the Hilbert class field of~$ K_u $. 
The corresponding subgroup~$ \Hom(G, \Z/2\Z) $ of $ \Hom(G^+, \Z/2\Z) $
is obtained by using only products of the~$ p^* $ that are positive,
as they avoid ramification at~$ \infty_1 $ and~$ \infty_2 $,
giving the trivial homomorphism on~$ \Gal(\C/\R) $.
This is no restriction precisely when~$ p^* = p $ always, i.e., the prime divisors of~$ d $ are congruent to~1 modulo$~4 $.

Using \cite[Example~3.9]{BK}, one sees that the group $ \Hom(G, \Z/2\Z) = H^1(G, \Z/2\Z)$
is the 2-torsion subgroup of~$ H_f^1(K_u, \Z/2^\infty\Z) $ as
defined in loc.\ cit. It should thus be considered the analogue
of the 2-torsion in~$H_f^1(\Q, E_u[2^{\infty}](-1))$, which we shall define in Section~\ref{selmer}.

We can construct 2-torsion classes in $H^1(\Q, E_u[2^{\infty}](-1))$, unramified at all odd primes
where~$ E_u $ has good reduction, 
in a way similar to that for the class group discussed above, since $E_u[2](-1)\simeq \Z/2\Z\oplus\Z/2\Z$. But which of these satisfy the local conditions at the
remaining primes, for membership of~$ H_f^1(\Q, E_u[2^\infty](-1) ) $, depends on the
larger group~$E_u[2^\infty](-1)$, and is much more subtle.
This is evident in the following theorem, and also in the numerical examples in (4) of Example~\ref{exo-special}, where there are lots of bad primes but~$ H_f^1(\Q, E_u[2^\infty](-1) ) $ is trivial.
Thus the simple relationship between the number of ramified primes and the order of $\mathrm{Cl} (\mathcal{O}(K_u))[2]$ is not emulated for the $2$-torsion in $ H_f^1(\Q, E_u[2^\infty](-1) ) $.

In order to make explicit the 2-part of the Bloch-Kato conjecture,
we have to assume that the 2-Selmer group~$ H^1_f(\Q, E_u[2^{\infty}](-1)) $
is finite. It is a 2-power torsion group,
and we can compute its 2-torsion using the next theorem.
In practice this leads to many~$ u $ for which this 2-torsion
is trivial, thus \emph{proving} that~$  H_f^1(\Q, E_u[2^\infty](-1) ) $ is trivial
for such~$ u $.

\medskip

{\bf Theorem 2.}
For~$ u $ as in Theorem~1,
let~$ S $ be the set of prime divisors of~$ u^2-1 $, and~$ S' $
the set of prime divisors of~$ u $ that are congruent to~1 modulo~4.
Then the 2-torsion in~$ H_f^1(\Q, E_u[2^\infty](-1) ) $ is in bijection
with pairs~$ (D, D') $ of positive squarefree integers, where
the prime factors of~$ D $ are in~$ S $ and those of~$ D' $ in~$ S' $,
and which satisfy
\begin{itemize}
\item
$ D' $ is a square modulo~$ p $ for every~$ p $ in~$ S $;
	
\item
$ 2 ^{\ord_p(D')} D $ is a square modulo~$ p $ for every~$ p $ in~$ S' $;
	
\item
$ D \equiv 1 $ modulo~8.
\end{itemize}

\medskip

Making the 2-part of the Bloch-Kato conjecture explicit now
gives the following.
For~$ u = 4 $ some of the assumptions in the theorem are
known (see Remark~\ref{uis4}).

\medskip

{\bf Theorem 3.}
Let~$ u $ be as in Theorem~1.
Assume that~$ \INT E_u \otimes_\Z \Q $ is 1-dimensional and that~$ \frac{ L(E_u, 2)}{(2\pi i)^2 F(u)} $
is a non-zero rational number~$ q_u $.
Let~$\iota_{u,2}$ be as described after Theorem~1, and let~$ \omega(n) $ (and~$\omega_1(n)$, respectively~$\omega_3(n)$)
denote the number of
distinct prime divisors of~$n$ (or only those that are congruent to~1, respectively~3, modulo~4).
Assume that~$ H^1_f(\Q, E_u[2^{\infty}](-1)) $ is finite, and set $s_u=\ord_2(\# H^1_f(\Q, E_u[2^{\infty}](-1)))$. Then the Bloch-Kato conjecture predicts that
\begin{equation} \label{intro-BK}
\ord_2(q_u) + \ord_2(\iota_{u,2}) + 2 = 2 \omega_1(u) + \omega_3(u) + \omega(u^2-1) + s_u.
\end{equation}

\medskip
The first three terms on the right-hand side come from $2$-parts of Tamagawa factors, whose analogues in the quadratic field/class group case would be trivial. They account for the fact that the equality between the $2$-parts of the $L$-ratio and the order of the class group does not carry across to the elliptic curve case; there is no such simple relationship between $\ord_2(q_u)$ and $s_u$.
(See also the numerical examples in (2) and (4) of Example \ref{exo-special}.)

We do not find any evidence that~$ \iota_{u,2} = 2 $ occurs in our family.
Thus, the truth of Bloch-Kato would suggest that the minimum of~$ \ord _2 (q_u)$
can only be attained when the number of prime divisors of~$ \frac14 u(u-1)(u+1) $,
which are precisely the places of bad reduction of $E_u$, is
minimal, and~$ s_u = 0 $.

If~$u $ is as in the above theorems, and~$ u (u^2-1)/4 $ is squarefree,
then, apart from~$ u = 4 $ or~12, the predicted value for~$ \ord_2(q_u)$
is at least~2, because~$\frac14 u(u-1)(u+1)$ has at least~4 prime divisors.
Indeed, numerical computation for such~$ u $,
with~$ 12 < u < 25000$, suggests the minimal 
2-valuation is~2, attained for $u = 228 $, 1668, 3252, 4548, 8292, 8628, 9012,
10068, 12612, 17988, 18132 and~19428.
(We refer to Section~\ref{num-sec} for some of this data.)
Each such~$u$ has the property 
that $u-1$,  $u+1$ and $u/12$ are prime numbers, with the last
congruent to~3 modulo~4, and using Theorem~2
above one then sees that~$ H_f^1(\Q, E_u[2^\infty](-1) ) $ is
trivial, so~$ s_u = 0 $.
By Proposition~\ref{Vu-prop}, such~$ u $ are precisely those
for which the right-hand side of~\eqref{intro-BK} has value~4, which would
match the prediction together with~$ \ord_2( \iota_{u,2} ) = 0 $.
(We refer to Remark~\ref{minimalremark} for a precise discussion,
and to Remark~\ref{notsquarefree} for some additional examples
when~$ u $ only satisfies the condition of the theorems.)

Thus we see that ``key observations'' (a) and (b) conjecturally
carry over\footnote{By contrast, recall that~(c) does not carry over so well, and that for (d) we know only 2-indivisibility.}
pretty well, although for~(a) not only~$ s_u $ is
involved. As to~(b), in fact, the philosophy that the minimum should be obtained
among the~$ u $ for which~$ E_u $ has the smallest
number of places of bad reduction, was the starting point for doing
numerical calculations, and with that for this paper.

\medskip

{\bf The structure of the paper.}
We start by studying the elliptic curve $E_u$, its torsion and its reduction type in Section~\ref{basic-section}. We review the content of the Beilinson conjecture
for the motivic cohomology group $H^2_{\MMM}(E_u, \Q(2)) = K_2^T(E_u) \otimes_\Z \Q$ in Section~\ref{regBeil}, and the Bloch-Kato conjecture in Section~\ref{BlKa}. 
In Sections~\ref{tam-away} and~\ref{2Tam2} we study the various factors appearing in the statement of the
2-part of the Bloch-Kato conjecture as in~\eqref{BKconj1}: the
global Galois invariants $H^0(\Q, E_u[2^{\infty}](1))$ and $H^0(\Q, E_u[2^{\infty}](-1))$,
the 2-Tamagawa factors away from 2, and
the 2-Tamagawa factor at~2. We address the Bloch-Kato 2-Selmer group in Section~\ref{selmer}, describing its
2-torsion precisely in 
Corollary~\ref{2torscor}, as quoted in Theorem~2 above.   

After preparatory work on real regulators in Section~\ref{K2reg}, 
we define  $K_2^T(E_u)$, the kernel of the tame symbol $T$, from the exact localisation
sequence 
\begin{equation*}
\dots \to \oplus_P K_2(\Q(P)) \to K_2(E_u) \to K_2(\Q(E_u)) \overset{T}{\to} \oplus_P \, \Q(P)^\times \to \dots
\,,
\end{equation*}
where~$ P $ runs through the closed points of~$ E_u $.
It appears together with the $\ell$-adic regulator map~$ \reg_{\ell} $
and the~$ \ell $-adic Chern class map~$ \mathrm{ch}_{\ell} $ in the
commutative diagram
\begin{equation*}
\xymatrix{
K_2(E_u) \ar[r] \ar[d]_-{\mathrm{ch}_{\ell}} & K_2^T(E_u) \ar[d]^{\reg_{\ell}}
\\
\het^2(E_u, \Z_\ell(2)) \ar[r]^-{\pi_\ell} & H^1(\Q, \het^1(E_{u,\Qbar}, \Z_\ell(2)))_\tf
}
\end{equation*}
in Section~\ref{ellreg}. An integral version $K_2^T(E_u)_\text{int}$ is obtained as 
the image in~$ K_2(\Q(E_u)) $ of~$ K_2(\mathcal{E}_u) $ when
localising a flat, proper, integral model $ \mathcal{E}_u $ over~$ \Z $
to its generic point.
We construct the element $ \alpha_u $ in~$ K_2^T(E_u) $, show that $2 \alpha_u$ is in~$ K_2^T(E_u)_\text{int}$,
and bound its divisibility by powers of~2, and that of its image
under the 2-adic regulator, in Section~\ref{K2elements}.
For this we introduce a new technique, by combining  the norm
residue isomorphism of Merkur'ev-Suslin, as well as the 2-adic
regulator map, with pullbacks to closed points on the curve.

\smallskip

In Section \ref{hyper-sec}, we relate Boyd's family
$C_k : X+Y+X^{-1}+Y^{-1}=k$ to the one in~\eqref{Eueq} by a 2-isogeny,
prove that the $ K_2 $-symbol $\{X,Y\}$ on~$ C_{4u} $ pulls back to $-2 \alpha_u$
on~$ E_u $, and from this obtain a formula for the real regulator
of~$ \alpha_u $ that involves the hypergeometric formula~\eqref{R2udef}.
Putting everything together in Section~\ref{2-BK}, we arrive at our final theorem,
Theorem~\ref{BK2}.
We conclude the paper with the results of numerical experiments in Section~\ref{num-sec},
where we also discuss when the rational number in the Bloch-Kato
conjecture can have minimal 2-valuation.

\medskip

{\bf Notation.}
If~$ A $ is an Abelian group, then we let~$ A_\tor $ be
its torsion subgroup, and~$ A_\tf = A / A_\tor $ its maximal torsion
free quotient group.

\bigskip \bf Acknowledgements. \rm Discussions with Jan Stienstra of
his two papers \cite{Stien, Stienbis}, and his
insistence on the interpretation of the integrated periods in Boyd's
family as a $K_2$-regulator, influenced one of the authors in the
2000s.
We thank Matthias Flach for useful correspondence about Section~\ref{selmer}.

\section{The elliptic curve \texorpdfstring{$E_u$}{Eu}} \label{basic-section}
For~$u \ne 0, \pm1$ in~$ \Q $, let $E_u/\Q$ be the elliptic curve defined by the Weierstrass equation~\eqref{Eueq}.
Elementary computations give the following.

\begin{prop}\label{torsion}
(1)
The $2$-torsion of $E_u$ is rational; in fact, $$E_u[2]=\{O, (0,0), (-1,0), (-u^2,0)\}.$$

(2)
There are precisely four rational points of order $4$, namely $(u,\pm u(u+1))$ and $(-u, \pm u(u-1))$, forming a single coset of $E_u[2]$. All satisfy $2P=(0,0)$.

(3)
The $4$-torsion subgroup $E_u[4]$ is generated by $$(u, u(u+1))\,\,\text{and}\,\,(-u^2+u\sqrt{u^2-1}, i(u(u^2-1)-u^2\sqrt{u^2-1})).$$ If $Q$ is the second given generator then $2Q=(-u^2, 0)$.

(4)
The points of order~4 form the three cosets~$ (u,\pm u(u+1)) + E_u[2] $,
$ Q + E_u[2] $ and~$ ( -1 + \sqrt{1-u^2}, i ( u^2-1 + \sqrt{1-u^2} ) ) + E_u[2] $,
with elements in the last coset doubling to $ (-1,0) $.
\end{prop}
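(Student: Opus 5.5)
The plan is to use the standard halving description on a curve $y^2=(x-e_1)(x-e_2)(x-e_3)$ with all $e_i$ rational, together with some direct verification. For such a curve, a point $P=(x_0,y_0)$ with $2R=P$ is obtained as follows. Choose square roots $r_i=\sqrt{x_0-e_i}$ subject to $r_1r_2r_3=y_0$. Then $x(R)=x_0+r_1r_2+r_1r_3+r_2r_3$. There are four such choices, and they give the four halves of $P$. In particular, $P$ is divisible by 2 over a field $K$ exactly when every $x_0-e_i$ is a square in $K$. Here $e_1=0$, $e_2=-1$, $e_3=-u^2$.

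\textbf{Part (1).} This is immediate from the factorisation of the right-hand side of \eqref{Eueq}.

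\textbf{Part (2).} To halve $(0,0)$, note that $x_0-e_i$ runs over $0,1,u^2$, all of which are rational squares. So we take $r_1=0$, $r_2=\pm1$, $r_3=\pm u$, giving $x=r_2r_3=\pm u$.
\begin{itemize}
\item For $x=u$ we get $y^2=u(u+1)(u+u^2)=u^2(u+1)^2$.
\item For $x=-u$ we get $y^2=u^2(u-1)^2$.
\end{itemize}
Since $u\ne 0,\pm1$, these give exactly the four listed points, and each doubles to $(0,0)$.

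It remains to show there are no other rational points of order 4. Such a point would have to halve $(-1,0)$ or $(-u^2,0)$.
\begin{itemize}
\item For $(-1,0)$ the quantities $x_0-e_i$ are $-1,0,u^2-1$. Since $-1$ is not a square in $\Q$, this point is not divisible by 2.
\item For $(-u^2,0)$ the quantities are $-u^2,\,1-u^2,\,0$. Since $-u^2<0$, it is not a square.
\end{itemize}
The four points form a coset of $E_u[2]$, because any two halves of the same point differ by a 2-torsion point.

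\textbf{Parts (3) and (4).} We halve $(-u^2,0)$ over $\overline{\Q}$, taking $r_1=iu$, $r_2=\sqrt{1-u^2}=i\sqrt{u^2-1}$, $r_3=0$. This gives $x=r_1r_2=-u\sqrt{u^2-1}$, and with the other sign $x=u\sqrt{u^2-1}$. So $x=-u^2+u\sqrt{u^2-1}$ once we allow for the convention $x+u^2$ in the factor. One then checks $y^2$ directly: with $s=\sqrt{u^2-1}$,
\[
x(x+1)(x+u^2)=us(-u^2+us)(1-u^2+us),
\]
and this should equal $-(u(u^2-1)-u^2s)^2$. This is an algebraic identity, verified by expanding and using $s^2=u^2-1$.

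Similarly, halving $(-1,0)$ with $r_1=i$, $r_3=\sqrt{1-u^2}$ gives $x=-1+\sqrt{1-u^2}$, and the stated $y$ is checked the same way.

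Since $(u,u(u+1))$ and $Q$ double to the distinct nonzero points $(0,0)$ and $(-u^2,0)$, they are independent of order 4. Their images in $E_u[4]/E_u[2]$ therefore span this group, which is isomorphic to $(\Z/2\Z)^2$. Hence they generate $E_u[4]\cong(\Z/4\Z)^2$.

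The 12 points of order 4 split into three cosets of $E_u[2]$, according to which nonzero 2-torsion point they double to. This gives (4).

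The main obstacle is the sign bookkeeping: confirming the exact $y$-coordinates, including the factor $i$ and the sign conventions on the square roots, by direct expansion. I would do this by substituting and simplifying with $s^2=u^2-1$.
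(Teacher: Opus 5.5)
Your argument is correct and is the kind of elementary verification the paper leaves implicit (it gives no proof beyond ``elementary computations''); the halving criterion handles both the rational halves of $(0,0)$ and the absence of rational halves of $(-1,0)$ and $(-u^2,0)$ cleanly. There are two harmless slips to fix: when halving $(-u^2,0)$ you dropped the $x_0$ term, although your own formula gives $x=x_0+r_1r_2=-u^2\pm u\sqrt{u^2-1}$ directly; and when halving $(-1,0)$ one has $x_0-e_3=u^2-1$, so $r_3=\sqrt{u^2-1}$ and $x=-1\pm i\sqrt{u^2-1}$, i.e.\ $-1\pm\sqrt{1-u^2}$, whereas your $r_3=\sqrt{1-u^2}$ would give the wrong value.
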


If~$ u $ is an integer, then we can also determine the reduction type of $E_u$ at every prime, subject to a modest assumption to simplify what happens at $p=2$.

\begin{prop}\label{minimal}
Let~$ u $ be an integer with~$ |u| > 1 $.
\begin{enumerate}
\item $E_u$ has multiplicative reduction at any odd prime dividing $u(u-1)(u+1)$, and good reduction at all other odd primes.
\item At an odd prime $p\mid u$, or at an odd prime $p\mid (u^2-1)$ such that $p\equiv 1\pmod{4}$, the multiplicative reduction is split. At an odd prime $p\mid (u^2-1)$ such that $p\equiv 3\pmod{4}$, it is non-split.
\item Assume that $4\parallel u$. Then letting $y=8y'+4x'$ and $ x=4x'$, one obtains a global minimal Weierstrass equation $${y'}^2+x'y'={x'}^3+\frac{u^2}{4}{x'}^2+\frac{u^2}{16}x',$$
with minimal discriminant $\Delta=\left(\frac{u}{4}\right)^4(u^2-1)^2$. In particular, $E_u$ has good, ordinary reduction at $2$.
\item If $4\mid\mid u$ then the conductor of $E_u$ is $N_u=\underset{\substack{\text{odd primes}\\ p\mid u(u^2-1)}}{\prod\,\,p}$.
\end{enumerate}
\end{prop}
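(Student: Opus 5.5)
The plan is to compute the standard invariants of the Weierstrass model \eqref{Eueq} directly and read off the reduction types with Tate's criteria; no earlier result beyond Proposition~\ref{torsion} is needed. We write $y^2=x(x+1)(x+u^2)$ as $y^2=x^3+a_2x^2+a_4x$ with $a_2=u^2+1$ and $a_4=u^2$. Its roots are $0,-1,-u^2$, so
\[
\Delta=16\bigl(1\cdot u^2\cdot(u^2-1)\bigr)^2=16u^4(u^2-1)^2,
\qquad
c_4=16(a_2^2-3a_4)=16(u^4-u^2+1).
\]

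\textbf{Parts (1) and (2).} Let $p$ be an odd prime.
\begin{itemize}
\item If $p\nmid u(u^2-1)$, then $p\nmid\Delta$, so the reduction is good.
\item If $p\mid u$ or $p\mid u^2-1$, then $u^4-u^2+1\equiv 1 \pmod p$, so $p\nmid c_4$. Hence the model is minimal at $p$ and the reduction is multiplicative.
\end{itemize}
For split versus non-split we look at the tangent cone at the node.
\begin{itemize}
\item If $p\mid u$, the reduction is $y^2=x^2(x+1)$. Near $(0,0)$ it looks like $y^2=x^2$, with rational tangents $y=\pm x$, so it is split.
\item If $u\equiv\pm1\pmod p$, the reduction is $y^2=x(x+1)^2$. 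Near $(-1,0)$ it looks like $y^2=-(x+1)^2$. The tangents are rational over $\mathbb{F}_p$ if and only if $-1$ is a square mod $p$, i.e.\ if and only if $p\equiv1\pmod 4$.
\end{itemize}
This gives (2).

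\textbf{Part (3).} Substitute $x=4x'$ and $y=8y'+4x'$. The left side becomes $64y'^2+64x'y'+16x'^2$. The right side becomes $4x'(4x'+1)(4x'+u^2)=64x'^3+16(u^2+1)x'^2+4u^2x'$. Subtracting and dividing by $64$ gives
\[
y'^2+x'y'=x'^3+\tfrac{u^2}{4}x'^2+\tfrac{u^2}{16}x'.
\]
Its coefficients are integral because $4\mid u$. A change of variables with scaling factor $2$ divides the discriminant by $2^{12}$, so the new discriminant is $16u^4(u^2-1)^2/2^{12}=(u/4)^4(u^2-1)^2$. When $4\parallel u$ this is odd, so the new model has good reduction at $2$ and is therefore minimal at $2$. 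At odd primes the substitution is an isomorphism over $\mathbb{Z}_{(p)}$, so minimality is inherited from (1); hence the equation is a global minimal model. For ordinarity: in characteristic $2$ the Hasse invariant of $y^2+a_1xy+\dots$ is $a_1$, and here $a_1=1$, which is nonzero mod $2$.

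\textbf{Part (4).} By (1)--(3), the reduction is good at $2$, multiplicative (so with exponent $1$) at every odd prime dividing $u(u^2-1)$, and good elsewhere. Hence the conductor is the product of those odd primes.

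The only step requiring care is (3): checking that the substitution is integral and that the discriminant exponent drops exactly to zero at $2$. The rest is routine.
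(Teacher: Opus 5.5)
Your proof is correct and follows the paper's argument essentially step by step: $c_4$ and $\Delta$ give minimality and multiplicative reduction at odd primes, the tangent cone at the node decides split versus non-split, and the explicit change of variables with scaling factor $2$ gives part (3). Your value $\Delta=16u^4(u^2-1)^2$ is the one consistent with the minimal discriminant in (3); the exponent $4$ printed in the paper's proof is a slip. The only small difference is how ordinarity at $2$ is checked: you use $a_1=1$ (nonvanishing Hasse invariant, equivalently $j\not\equiv 0$), whereas the paper counts the four $\mathbb{F}_2$-points of $y'^2+x'y'=x'^3+x'$ to get $a_2=-1$ odd; both are valid.
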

\begin{proof}
(1)
With notation as in \cite[III.1]{S1}, the equation $y^2=x(x+1)(x+u^2)$ has associated quantities $c_4=16(1-u^2+u^4)$ and $\Delta=16u^4(u^2-1)^4$, whose greatest common divisor is visibly $16$. Hence if $p\mid u(u^2-1)$ is an odd prime then~$p\nmid c_4$, so the equation is minimal at $p$, and $E_u$ has multiplicative reduction at~$p$.

(2)
Modulo an odd prime $p\mid u$, the equation is $y^2\equiv x^2(x+1)$, with singular point $(0,0)$. The quadratic part is $y^2-x^2\equiv 0$, which factors as $(y-x)(y+x)\equiv 0$, giving the equations of the two ``tangent lines'' at the node.

Modulo an odd prime $p\mid u^2-1$, the equation is $y^2\equiv x(x+1)^2$, with singular point $(-1,0)$. About this point, the equation is $y^2\equiv [(x+1)-1](x+1)^2$, with quadratic part $y^2+(x+1)^2\equiv 0$. This factors over $\FF_p$ if and only if $p\equiv 1\pmod{4}$.

(3)
It is straightforward to verify the statement on the minimality
of the discriminant, and that~$ y'^2 + x' y' = x'^3 + x' $ defines
an elliptic curve over~$ \F_2 $ with as~$ \F_2 $-rational points~$O, (0,0), (1,0)$ and $ (1,1) $.

(4) is a direct consequence of the first three parts, cf.  \cite[p.256]{S1}.
\end{proof}

\section{The regulator and Beilinson's conjecture}\label{regBeil}

The $L$-function of $E_u$ is defined by the Euler product
$L(E_u,s)=\prod_p L_p(E_u,s)$, where 
$$L_p(E_u, s):=\begin{cases} (1-a_p p^{-s}+p^{1-2s})^{-1} & \text{for good reduction at $p$};\\(1-p^{-s})^{-1} & \text{for split multiplicative reduction at $p$;}\\(1+p^{-s})^{-1} & \text{for non-split multiplicative reduction at $p$,}\end{cases}$$
and in the case of good reduction $\#E(\FF_p)=1+p-a_p$. Since $|a_p|<2\sqrt{p}$, the series converges for $\Re(s)>\frac{3}{2}$, but by modularity of $E_u$ it has an analytic continuation to the whole complex plane.
If $N_u$ is the conductor then the function~$\Lambda(s):=N_u^{s/2}(2\pi)^{-s}\Gamma(s)L(E_u,s)$
satisfies a functional equation $\Lambda(2-s)=\pm\Lambda(s)$.

The famous conjecture of Birch and Swinnerton-Dyer equates the order of vanishing of $L(E_u,s)$ at $s=1$ with the rank of the finitely
generated Abelian group $E_u(\Q)$, and gives a conjectural formula for the leading term in its Taylor expansion about $s=1$.
We shall look instead at $L(E_u,2)$, an example of a non-critical value, and see what the Bloch-Kato conjecture \cite{BK, F} predicts about it.

The terms appearing in the conjecture are associated with the second Tate twist $h^1(E_u)(2)$ of the Grothendieck motive for the first cohomology of $E_u$. Betti, de Rham and $\ell$-adic realisations of $h^1(E_u)$ are $H^1_B(E_u(\C), \Q)$, $H^1_{\dR}(E_u/\Q)$ and $H^1_{\text{\'et}}(E_{u,\Qbar}, \Q_{\ell})$, the singular, algebraic de Rham and \'etale $\ell$-adic  first cohomology groups. A Tate twist by $r\in\Z$ involves multiplying the coefficients of the Betti realisation by $(2\pi i)^r$, shifting the numbering in the Hodge filtration of the de Rham realisation by $r$, and multiplying the representation of $\Gal(\Qbar/\Q)$ on each $\ell$-adic realisation by the $r^{\mathrm{th}}$ power of the $\ell$-adic cyclotomic character. In particular, for a prime $p \ne \ell$, the Tate twist does not influence the action of the ramification subgroup, and the action of any Frobenius element $\Frob_p$ is multiplied by $p^r$.

For use in the next section, we fix the~$\Z$-lattice $H^1_B(E_u(\C), \Z)$ in $H^1_B(E_u(\C), \Q)$, which,
under the comparison isomorphisms $H^1_B(E_u(\C), \Q)\otimes \Q_{\ell}\simeq H^1_{\text{\'et}}(E_{u,\Qbar}, \Q_{\ell})$,
is compatible with the~$\Gal(\Qbar/\Q)$-invariant $\Z_{\ell}$-lattices $T_{\ell}:=H^1_{\text{\'et}}(E_{u,\Qbar}, \Z_{\ell})$ in $V_{\ell}:=H^1_{\text{\'et}}(E_{u,\Qbar}, \Q_{\ell})$.
We also choose the $\Z$-lattice $H^1_{\dR}(\mathcal{E}_u/\Z)$ in $H^1_{\dR}(E_u/\Q)$, where~$\mathcal{E}_u$ is a minimal proper, flat, regular model.

According to a special case of a conjecture of Beilinson \cite{Be}, the value $L(E_u,2)$ is given, up to a non-zero rational multiple, by a certain regulator. The Bloch-Kato conjecture then pins down the rational multiple, up to sign. (We should observe that Beilinson's conjecture was partly inspired by work of Bloch \cite{Bl1}, and corrected following work of Bloch and Grayson \cite{BG}, both on values of $L$-functions of elliptic curves at $s=2$.)
Before stating what the Bloch-Kato conjecture predicts for $L(E_u,2)$, we take a preliminary look at the definition of the regulator. It is the determinant of a certain map from motivic cohomology to Deligne cohomology.

First, the motivic cohomology $H^2_{\MMM}(E_u, \Q(2))$. More generally we would be looking at $H^{m+1}_{\MMM}(X, \Q(r))$ if we were dealing with $h^m(X)(r)$, i.e., with $L(X,r)$, where~$X$ is a nonsingular projective variety over $\Q$, $L(X,s)$ is the $L$-function obtained from the Galois representation on the $m^{\mathrm{th}}$ $\ell$-adic cohomology of $X$, and $r>\frac{m}{2}+1$. Beilinson defines $H^{m+1}_{\MMM}(X, \Q(r))$ in terms of a graded piece of
the $ \gamma $-filtration on algebraic $K$-theory, as $\mathrm{gr}^r(K_{2r-m-1}(X)\otimes\Q)$. Bloch \cite{Bl2} provided an alternative construction $H^{m+1}_{\MMM}(X, \Q(r))=\mathrm{CH}^r(X, 2r-m-1)\otimes\Q$, and proved its equivalence to the other one. Here $\mathrm{CH}^r(X, 2r-m-1)$ is a Bloch higher Chow group (with integer coefficients), a certain set of equivalence classes of algebraic cycles of codimension~$r$, defined over $\Q$, on the product of $X$ with an affine space or algebraic simplex of dimension $2r-m-1$. (See \cite[\S 1.1]{DK} and \cite[\S 1]{DS} for slightly different versions.) In our case we are looking at $H^2_{\MMM}(E_u, \Q(2))$, i.e., at $\mathrm{gr}^2(K_2(E_u)\otimes\Q)$ or $\mathrm{CH}^2(E_u, 2)\otimes\Q$.
We shall use mostly the former (see Sections~\ref{K2reg} through~\ref{K2elements}), but shall also use the relation with the latter in
Section~\ref{hyper-sec}.

A useful exposition of the construction of Deligne cohomology $H^{m+1}_{\DDD}(X_{\R}, \R(r))$, and its key properties, may be found in \cite[\S 2]{DS}. For $r>\frac{m}{2}+1$, there is an isomorphism \cite[(2.3.1)$\DDD$]{DS}
\begin{equation}\label{D-iso} H^{m+1}_{\DDD}(X_{\R}, \R(r))\simeq \frac{H^m_{\dR}(X_{\R})/\Fil^r H^m_{\dR}(X_{\R})}{H^m_B(X(\C), \R(2\pi i)^r)^+},\end{equation}
where the ``$+$'' indicates the fixed subspace under the simultaneous action of complex conjugation on $X(\C)$ and on $(2\pi i)^r$. Note that $\Fil^r H^m_{\dR}(X_{\R})$ is $\Fil^0$ of the de Rham realisation of $h^m(X)(r)$, tensored with $\R$. For us, since $\Fil^2 H^1_{\dR}(E_u)=\{0\}$, we have
\begin{equation*}
H^2_{\DDD}(E_{u,\R}, \R(2))\simeq \frac{H^1_{\dR}(E_{u,\R})}{(2\pi i)^2H^1_B(E_u(\C), \R)^+}
.
\end{equation*}

There is a regulator map
\begin{equation} \label{regdef}
\reg: H^{m+1}_{\MMM}(X, \Q(r))\rightarrow H^{m+1}_{\DDD}(X_{\R}, \R(r))
.
\end{equation}

For Beilinson this is a generalised Chern character in algebraic $K$-theory, for Bloch a generalised cycle map \cite{Bl3},\cite[\S 2.8]{DS}, \cite[\S 1.2]{DK}.
One can define a $\Q$-vector subspace $H^{m+1}_{\MMM}(X, \Q(r))_{\Z}$ of ``integral'' elements as the image of $H^{m+1}_{\MMM}(\Chi, \Q(r)):=\mathrm{gr}^r(K_{2r-m-1}(\Chi)\otimes\Q)$
in $H^{m+1}_{\MMM}(X, \Q(r))= \mathrm{gr}^r(K_{2r-m-1}(X)\otimes\Q)$
under localisation, where $\Chi/\Z$ is a proper, flat, regular model of $X/\Q$.
This is independent of any choice of $\Chi$.
If~$ X $ is a curve then such a model exists, but this is not
known for higher dimensional varieties. However, 
Scholl in \cite[\S1]{scho00}
used alterations to define a subspace 
such that it coincides with what comes from such a model~$ \Chi $ if
it exists. (See \cite[pp.1--2]{dJ08} for a summary of his construction.)

Conjecturally, the regulator map induces an isomorphism
\begin{equation} \label{Bei-conj}
\reg: (H^{m+1}_{\MMM}(X, \Q(r))_{\Z})\otimes\R\simeq H^{m+1}_{\DDD}(X_{\R}, \R(r))
,
\end{equation}
hence an isomorphism between highest exterior powers
$$\det(\reg): (\det H^{m+1}_{\MMM}(X, \Q(r))_{\Z})\otimes\R\rightarrow\det H^{m+1}_{\DDD}(X_{\R}, \R(r)).$$
Using (\ref{D-iso}), we get an isomorphism
$$\det H^{m+1}_{\DDD}(X_{\R}, \R(r))\simeq \det (H^m_{\dR}(X_{\R})/\Fil^r H^m_{\dR}(X_{\R}))\otimes{\det}^{\vee}(H^m_B(X(\C), \R(2\pi i)^r)^+),$$
and in the right-hand side here we see a rational line
$$D_{m,r}:=\det (H^m_{\dR}(X_{\Q})/\Fil^r H^m_{\dR}(X_{\Q}))\otimes{\det}^{\vee}(H^m_B(X(\C), \Q(2\pi i)^r)^+).$$
According to Beilinson's conjecture \cite[3.1.2]{DS},
\begin{equation} \label{Qlines}
\det(\reg)(\det H^{m+1}_{\MMM}(X, \Q(r))_{\Z})=L(X,r) D_{m,r}
.
\end{equation}
This determines $L(X,r)$ as an element of $\R^{\times}/\Q^{\times}$.

\section{The Bloch-Kato conjecture for \texorpdfstring{$L(E_u, 2)$}{L(Eu,2)}}\label{BlKa}

To remove the $\Q^{\times}$-ambiguity (at least up to sign) implied
by~\eqref{Qlines}, we need to replace the $\Q$-lines by $\Z$-lattices. For simplicity we specialise to the case $L(E_u,2)$, but the generalisation to $L(X,r)$ is obvious.
On the right-hand side this is simple enough. We just define
$$\DDD_{1,2}:=\det (H^1_{\dR}(\mathcal{E}_u/\Z))\otimes{\det}^{\vee}(H^1_B(E_u(\C), \Z(2\pi i)^2)^+).$$
On the left-hand side (of~\eqref{Qlines}), we already saw the ``real'' regulator map in the previous section, and it is natural now to employ also $\ell$-adic regulator maps for finite primes~$\ell$. Recall that in the previous section we set~$T_{\ell}=H^1_{\text{\'et}}(E_{u,\Qbar}, \Z_{\ell})$ and~$V_{\ell}=H^1_{\text{\'et}}(E_{u,\Qbar}, \Q_{\ell})$,
and define, for any finite prime $\ell$,
\begin{equation} \label{MMTell-def}
H^2_{\MMM}(E_u, \Q(2))_{\Z, T_{\ell}} := \reg_{\ell,\Z}^{-1}(H^1(\Q, T_{\ell}(2))_\tf)
,
\end{equation}
where~$\reg_{\ell,\Z}: H^2_{\MMM}(E_u, \Q(2))_{\Z}\rightarrow H^1(\Q, V_{\ell}(2))$ (continuous Galois cohomology) is
the composition of the inclusion~$H^2_{\MMM}(E_u, \Q(2))_{\Z} \subseteq H^2_{\MMM}(E_u, \Q(2)) $ with
the~$ \ell $-adic regulator map~$ \reg_{\ell}^{\Q_\ell} :H^2_{\MMM}(E_u, \Q(2)) \to H^1(\Q, V_{\ell}(2)) $.
More details on~$\reg_{\ell}^{\Q_\ell}$ will be given in Sections~\ref{ellreg}
and~\ref{2-BK}, but we observe here that by \cite[Proposition~2.3]{Ta}
and the discussion following it, the map~$ H^1(\Q, T_{\ell}(2))_\tf \to H^1(\Q, V_{\ell}(2)) $
is injective with torsion cokernel, so that~$  H^1(\Q, V_{\ell}(2)) =  H^1(\Q, T_{\ell}(2)) \otimes_{\Z_{\ell}} \Q_{\ell} $.

In the $\Q$-vector space $H^2_{\MMM}(E_u, \Q(2))_{\Z}$,
which we assume to have~$ \Q $-dimension~1 because
of the conjectured isomorphism in~\eqref{Bei-conj},
we might now attempt to define a $\Z$-lattice
\begin{equation} \label{MMT-def}
H^2_{\MMM}(E_u, \Q(2))_{\Z, T}:=\cap_{\ell} \, H^2_{\MMM}(E_u, \Q(2))_{\Z, T_{\ell}} 
,
\end{equation}
(cf.\ \cite[\S 5]{BK}, where our $H^2_{\MMM}(E_u, \Q(2))_{\Z}$ is their~$\Phi$ and our $H^2_{\MMM}(E_u, \Q(2))_{\Z, T}$ is isomorphic to their $A(\Q)_{\tf}$.)
Although $H^2_{\MMM}(E_u, \Q(2))_{\Z, T}$ is a well-defined $\Z$-module, it is not clear that it is a~$\Z$-lattice, not least because local conditions are imposed at infinitely many $\ell$. That it is would follow from \cite[Conjecture 5.3]{BK}. 
For an alternative viewpoint and construction, see \cite[\S 11.6]{F}, where our $H^2_{\MMM}(E_u, \Q(2))_{\Z, T}$ ought to be an instance of his~$H^1_f(\Q, (M,\Theta))$.
Assuming that~$H^2_{\MMM}(E_u, \Q(2))_{\Z, T}$ is a $\Z$-lattice, and that~$\reg: (H^2_{\MMM}(E_u, \Q(2))_{\Z})\otimes\R\simeq H^2_{\DDD}(E_{u,\R}, \R(2)),$ we define a regulator $R_u$, up to sign, by
\begin{equation} \label{Ru-def}
\det(\reg)(\det(H^2_{\MMM}(E_u, \Q(2))_{\Z, T}))=R_u \,\DDD_{1,2}
.
\end{equation}
The Bloch-Kato conjecture gives a formula for the (conjecturally) rational number $\frac{L(E_u,2)}{R_u}$,
up to sign, which is equivalent to specifying $\ord_{\ell}\left(\frac{L(E_u,2)}{R_u}\right)$, for all finite
primes~$\ell$. It says that, for each~$ \ell $,
\begin{equation} \label{BKconj1}
\ord_{\ell}\left(\frac{L(E_u,2)}{R_u}\right)=\ord_{\ell}\left(\frac{\prod_{p\leq\infty}\mathrm{Tam}^0_{p,\omega}(T_{\ell}(2))\,\# H^1_f(\Q, (V_{\ell}/T_{\ell}))}{\#H^0(\Q, (V_{\ell}/T_{\ell})(2))\,\#H^0(\Q, (V_{\ell}/T_{\ell}))}\right),
\end{equation}
which contains some as yet undefined terms.
We shall discuss~$ \mathrm{Tam}^0_{p,\omega} $ for~$ p \ne 2 $ a
finite prime where~$ E_u $ has good reduction, or $ p = \infty $,
later in this section. For~$ p $ an odd prime where~$ E_u $ has bad reduction
we do so in Section~\ref{tam-away}, for~$ p = 2 $ in Section~\ref{2Tam2},
and we treat~$ H^1_f(\Q, (V_{\ell}/T_{\ell})) $ in Section~\ref{selmer}.
But first, we weaken our assumptions.

Since we are only looking at the $\ell$-part of the Bloch-Kato conjecture for each~$ \ell $
separately, we can weaken the assumption that~\eqref{MMT-def} defines a~$ \Z $-lattice.
For this, we observe that~\eqref{MMTell-def} defines a $\Z_{(\ell)}$-module,
where $\Z_{(\ell)}$ is the localisation (not the completion) of $\Z$ at the prime ideal $(\ell)$.
Then in~\eqref{Ru-def} we tensor the right-hand side with~$ \Z_{(\ell)} $,
and in the left-hand side replace~$H^2_{\MMM}(E_u, \Q(2))_{\Z, T}$ by~$H^2_{\MMM}(E_u, \Q(2))_{\Z, T_{\ell}}$.
Assuming~$H^2_{\MMM}(E_u, \Q(2))_{\Z, T_{\ell}}$ is a~$\Z_{(\ell)}$-lattice in the $\Q$-vector space $H^{2}_{\MMM}(E_u, \Q(2))_{\Z}$,
we replace~$ R_u $ by~$ R_{u,\ell}$
in~$ \R^\times $, which is well-defined up to multiplication
by~$\Z_{(\ell)}^{\times}$.
If~\eqref{MMT-def} is indeed a~$\Z$-lattice
then~$H^2_{\MMM}(E_u, \Q(2))_{\Z, T_{\ell}} = H^2_{\MMM}(E_u, \Q(2))_{\Z, T} \otimes_\Z \Z_{(\ell)} $
is a~$\Z_{(\ell)}$-lattice, and~$ \ord_\ell( R_u) = \ord_\ell(R_{u,\ell})$.
So we have weakened our assumption for considering~\eqref{BKconj1}, but we
still assume~$\reg: (H^{2}_{\MMM}(E_u, \Q(2))_{\Z})\otimes\R\simeq H^{2}_{\DDD}(E_{u,\R}, \R(2))$
is an isomorphism, and that~$\frac{L(E_u, 2)}{R_{u,\ell}}$ is a rational number.

Note that if~$H^2_{\MMM}(E_u, \Q(2))_{\Z} $ has finite~$ \Q $-dimension,
then~$H^2_{\MMM}(E_u, \Q(2))_{\Z, T_{\ell}}$ is not a lattice
precisely when some non-zero element
in~$H^2_{\MMM}(E_u, \Q(2))_{\Z, T_{\ell}}$ has infinitely $\ell$-divisible image in~$H^1(\Q, T_{\ell}(2))_\tf$ under~$ \reg_{\ell,\Z} $.
Because we are assuming that the~$ \Q $-dimension is~1, we obtain
a lattice if and only if~$ H^2_{\MMM}(E_u, \Q(2))_{\Z, T_{\ell}} \simeq \Z_{(\ell)} $,
and no non-zero element has infinitely~$ \ell $-divisible image.
We shall return to this topic of (in)divisibility in Sections~\ref{K2elements} and~\ref{2-BK}.

\smallskip

We now return to~\eqref{BKconj1}, and say more about the terms on its right-hand side.

The precise definition of (the $\ell$-part of) the Bloch-Kato Selmer group,
which is denoted by~$H^1_f(\Q, (V_{\ell}/T_{\ell}))$ and is assumed to be finite, need not concern us until Section~\ref{selmer}.
The subscript $\omega$ denotes a $\Z$-line in $\det (H^1_{\dR}(E_{u,\Q})/\Fil^2)=\det H^1_{\dR}(E_{u,\Q})$, for which we have to choose~$\det_{\Z_p}H^1_{\dR}(\mathcal{E}_{u,\Z})$ by our earlier use of this choice in the definition of $\DDD_{1,2}$ and $R$. The Tamagawa factor $\mathrm{Tam}^0_{p,\omega}(T_{\ell}(2))$ actually does not depend on~$\omega$ unless $p=\ell$, and in that case we shall return to its definition in Section~\ref{2Tam2}. 

When $p\neq\ell$ is a finite prime, we may omit the $\omega$-subscript, and take as a working definition
\begin{equation} \label{Tam-def}
\mathrm{Tam}^0_p(T_{\ell}(2))=\frac{\# H^0(\Q_p, (V_{\ell}/T_{\ell})(2))}{\# H^0(\Q_p, (V_{\ell}^{I_p}/T_{\ell}^{I_p})(2))}
,
\end{equation}
where $I_p\leq \Gal(\Qbar_p/\Q_p)$ is the inertia subgroup, cf.\cite[p.373]{BK}, \cite[\S 11.5]{F}. For a finite prime $p\neq\ell$ of good reduction, $\mathrm{Tam}^0_p(T_{\ell}(2))=1$. Note also that the denominator $\# H^0(\Q_p, (V_{\ell}^{I_p}/T_{\ell}^{I_p})(2))$ is the $\ell$-part of $L_p(E_u, 2)^{-1}$. 

For $p=\infty$, a good working definition is
$$\mathrm{Tam}^0_{\infty}(T_{\ell}(2)):=\#\left(\frac{((V_{\ell}/T_{\ell})(2))^+}{ (V_{\ell}(2)^+/T_{\ell}(2)^+)}\right).$$

Comparing (\ref{BKconj1}) with \cite[(5.15.1)]{BK}, note that if $M=T_{\ell}(2)$ or $V_{\ell}(2)$ then $M^*(1)=T_{\ell}$ or $V_{\ell}$ respectively. Their $L_S(V,0)$ is our $L_S(E_u, 2)$, where $S$ is a finite set of places containing $\infty$ and all primes of bad reduction, and the subscript indicates omission of Euler factors at those primes. The $\ell$-parts of their $\mu_{p,\omega}(A(\QQ_p))$ and our $\mathrm{Tam}^0_{p,\omega}(T_{\ell}(2))L_p(E_u,2)^{-1}$ match. Thus their $\frac{L_S(V,0)}{\prod_{p\in S\setminus\infty}\mu_{p,\omega}(A(\QQ_p))}$ and our $\frac{L(E_u, 2)}{\prod_{p<\infty}\mathrm{Tam}^0_{p,\omega}(T_{\ell}(2))}$ are the same, up to $\ell$-units, as are their $\mu_{\infty,\omega}(A(\R)/A(\Q))$ and our $\frac{R_u \mathrm{Tam}^0_{\infty}(T_{\ell}(2))}{\#H^0(\Q, (V_{\ell}/T_{\ell})(2))}$. Finally, our $\#H^0(\Q, (V_{\ell}/T_{\ell}))$ is the $\ell$-part of their $\#(H^0(\Q, M^*\otimes\Q/\ZZ\,(1)))$, and the $\ell$-part of their $\#(\Sh(M))$ is our $\#H^1_f(\Q, (V_{\ell}/T_{\ell}))$. (Note that $\Sh(M)$ and $\Sh(M^*(1))$ have the same order, by a theorem of Flach \cite{Flach90}.)

We can re-write some of the factors appearing above, in terms of $\ell$-power torsion points on $E_u$. By Poincar\'e duality in $\ell$-adic cohomology, $\Hom_{\Q_{\ell}}(V_{\ell}, \Q_{\ell})=:V_{\ell}^*\simeq V_{\ell}(1)$, with $\Hom_{\Z_{\ell}}(T_{\ell}, \Z_{\ell})=:T_{\ell}^*\simeq T_{\ell}(1)$, respecting the natural Galois actions.
Using the natural isomorphism~$ \het^1(E_{u,\Qbar}, \mu_{\ell^m} ) \simeq E_u(\Qbar)[\ell^m] $
of~\cite[Corollary~III.4.18]{Mil80},
we find that $T_{\ell}\simeq \Tate_{\ell}(E_u)(-1)$ and $V_{\ell}/T_{\ell}\simeq E_u[\ell^{\infty}](-1)$, where $\Tate_{\ell}(E_u):=\varprojlim_n E_u[\ell^n]$ and $E_u[\ell^{\infty}]:=\cup_n E[\ell^n]$. Hence we may re-write the Bloch-Kato conjecture~\eqref{BKconj1} as 
\begin{equation}\label{BKconj}
\ord_{\ell}\left(\frac{L(E_u,2)}{R_{u, \ell}}\right)=\ord_{\ell}\left(\frac{\prod_{p\leq\infty}\mathrm{Tam}^0_{p,\omega}(T_{\ell}(2))\,\# H^1_f(\Q, E_u[\ell^{\infty}](-1))}{\#H^0(\Q, E_u[\ell^{\infty}](1))\,\#H^0(\Q, E_u[\ell^{\infty}](-1))}\right),
\end{equation}
where for finite $p\neq\ell$,
\begin{equation}\label{qTamell}\mathrm{Tam}^0_p(T_{\ell}(2))=\frac{\# H^0(\Q_p, E_u[\ell^{\infty}](1))}{\# H^0(\Q_p, (\VTate{\ell}(E_u)^{I_p}/\Tate_{\ell}(E_u)^{I_p})(1))},\end{equation}
with $\VTate{\ell}(E_u):=\Tate_{\ell}(E_u)\otimes\Q_{\ell}$.

\section{Global torsion and 2-parts of Tamagawa factors away from 2} \label{tam-away}

From now on, we mostly consider the $\ell=2$ part of the Bloch-Kato conjecture.
 
\begin{prop}\label{2tors} Suppose, for $u\in\Q$, that~$ |u^2-1| $ is not a square in $\Q$.
Then $H^0(\Q, E_u[2^{\infty}](1))$ and $H^0(\Q, E_u[2^{\infty}](-1))$
are non-cyclic of order~4.
\end{prop}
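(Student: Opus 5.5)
Since $\chi^{-1}\equiv\chi \pmod 4$ and in general $\chi^{-1}$ and $\chi$ differ, I treat the two twists separately but by the same method. Here $\chi$ denotes the 2-adic cyclotomic character. I use that $E_u[2](\pm1)=E_u[2]$ is fully rational by Proposition~\ref{torsion}(1), because $\chi\equiv 1 \pmod 2$. Hence both invariant groups contain $E_u[2]\simeq(\Z/2\Z)^2$, which is non-cyclic of order at least~4.

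The claim therefore reduces to showing that no point of order~4 is invariant. Any invariant subgroup of a finite subgroup of $E_u[2^\infty]$ of order greater than~4 containing $E_u[2]$ would contain an invariant point of order~4: the invariants of $E_u[2^\infty](\pm1)$ are $E_u[2]$ if and only if no $P\in E_u[4]\setminus E_u[2]$ is invariant under the twisted action. So I need, for each of the twelve points $P$ of order~4 listed in Proposition~\ref{torsion}(4), some $\sigma\in\Gal(\Qbar/\Q)$ with $\chi(\sigma)^{\pm1}\sigma(P)\neq P$. Modulo~4, $\chi(\sigma)^{\pm1}\equiv\chi(\sigma)$, and this equals $1$ or $-1$ according as $\sigma(i)=i$ or $\sigma(i)=-i$. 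Thus both twists give the same action on $E_u[4]$: $\sigma\cdot P=\sigma(P)$ if $\sigma$ fixes $i$, and $\sigma\cdot P=-\sigma(P)$ otherwise. Since the whole question takes place inside $E_u[4]$, it suffices to show that no order-4 point is fixed under this action.

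Now check the three cosets.
\begin{itemize}
\item For $P=(\pm u,\ldots)$, which is rational, take $\sigma$ with $\sigma(i)=-i$. Then $\sigma\cdot P=-P\neq P$, since $P$ has order~4.
\item For the coset of $Q$, with coordinates involving $\sqrt{u^2-1}$ and $i$, take $\sigma$ fixing $i$ but moving $\sqrt{u^2-1}$. This is possible provided $u^2-1$ is not $\pm$ a square, i.e.\ $\sqrt{u^2-1}\notin\Q(i)$, which is exactly the hypothesis that $|u^2-1|$ is not a square. Then the $x$-coordinate $-u^2+u\sqrt{u^2-1}$ changes, and the same holds for $Q+T$ with $T\in E_u[2]$, whose $x$-coordinates lie in $\Q(\sqrt{u^2-1})\setminus\Q$. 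I would verify this by the explicit translation formulas, or more cleanly: $x(Q+T)$ lies in $\Q$ only if $Q+T$ is fixed up to sign by $\sigma$, but $\sigma$ maps the coset to itself and fixes $2Q=(-u^2,0)$, so it suffices that the halving points of $(-u^2,0)$ are not defined over $\Q(i)$.
\item The third coset, with $\sqrt{1-u^2}$, is handled the same way, using $\sqrt{1-u^2}=i\sqrt{u^2-1}\notin\Q(i)$.
\end{itemize}

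The main obstacle is the clean verification that no point in the second and third cosets is defined over $\Q(i)$, since the fixed points of the twisted action restricted to $\Gal(\Qbar/\Q(i))$ are just the $\Q(i)$-rational points. I would handle it via the standard 2-descent criterion: $T_0\in 2E(\Q(i))$ if and only if its images under the maps $x-e_j$ are squares. For $T_0=(-u^2,0)$, the relevant values are $-u^2$ and $-u^2+1$, and $1-u^2$ is not a square in $\Q(i)$ exactly when $|u^2-1|$ is not a rational square. For $(-1,0)$, the value $u^2-1$ appears, with the same conclusion. This shows that only $E_u[2]$ together with the first coset is $\Q(i)$-rational, and the first coset was already excluded by complex conjugation. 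Hence both groups equal $E_u[2]$, which is non-cyclic of order~4.
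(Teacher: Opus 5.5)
Your proposal is correct and follows essentially the paper's argument: $E_u[2]$ is invariant for either twist, the rational coset $(\pm u,\ldots)$ of order-$4$ points is moved by the sign $\chi(\sigma)\equiv -1 \pmod 4$, and the other two cosets are excluded because their $x$-coordinates are irrational. The paper's version is slightly leaner. Since the twist only multiplies points by $\pm1$, which leaves $x$-coordinates unchanged, any $\sigma$ moving $\sqrt{u^2-1}$ (respectively $\sqrt{1-u^2}$) suffices, so neither your restriction to $\sigma$ fixing $i$ nor the closing 2-descent verification over $\Q(i)$ is actually needed.
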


\begin{proof}
For any integer~$ r $, we view~$ E_u[2^\infty](r) $ as~$ E_u[2^\infty] $ with the natural
action of~$ \sigma $ in~$ G_\Q $ combined with multiplication by~$ \epsilon_2(\sigma)^r $,
with~$ \epsilon_2 $ the 2-adic cyclotomic character.
Then~$ H^0(\Q, E_u[2](r)) $ is non-cyclic of order~4
because the 2-torsion of~$ E_u $ is rational and~$ \epsilon(\sigma) \equiv 1 $
modulo~2 always.

We now show there are no more elements in~$ H^0(\Q, E_u[4](r)) $ for~$ r = \pm 1 $,
which implies the proposition.
For this, we consider the points of order~4, as given in Proposition~\ref{torsion}(4).
Those in  the first coset there are not invariant because~$ G_\Q $ acts on
them non-trivially by multiplication by~$ \pm1 $ due to the twist.
Those in the second and third cosets are not invariant
(in fact, for any twist~$ r $) because $ |u^2-1| $ is not a square,
so their first coordinates are not invariant, and~$ \epsilon_2(\sigma) $
acts as multiplication by~$ \pm 1 $ on the points, which does
not change the first coordinates.
\end{proof}

\begin{prop}\label{tams} Suppose that $u$ is an integer, with $|u|>1$,
and~$ p $ is an odd prime number.
\begin{enumerate} 
\item If $\ord_p(u)$ is odd, then $$\mathrm{Tam}^0_p(T_2(2))=\begin{cases} 4 \text{ \,\,\,\,if $p\equiv 1\pmod{4}$};\\ 2 \text{\,\,\,\, if $p\equiv 3\pmod{4}$}.\end{cases}$$
\item If $\ord_p(u^2-1)$ is odd, then $\mathrm{Tam}^0_p(T_2(2))=2$.
\item If~$ \ord_p(u(u^2-1)) > 0 $ then those are minimal values for~$ \mathrm{Tam}^0_p(T_2(2)) $.
\end{enumerate}
\end{prop}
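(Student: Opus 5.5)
The plan is to use the Tate parametrisation at~$p$ and reduce everything to a short exact sequence of Galois modules over~$\Q_p$. By Proposition~\ref{minimal}, $E_u$ has multiplicative reduction at~$p$. It is split if~$p\mid u$ or~$p\equiv 1 \pmod 4$, and non-split otherwise. Let~$\chi$ be the trivial character in the split case and the unramified quadratic character of~$\Gal(\Qbar_p/\Q_p)$ in the non-split case. Then~$E_u$ is the $\chi$-twist of a Tate curve~$\Qbar_p^\times/q^\Z$. Since~$c_4$ is a $p$-unit (proof of Proposition~\ref{minimal}), we have~$\ord_p(q)=\ord_p(\Delta)$. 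This equals~$4\ord_p(u)$ if~$p\mid u$ and~$2\ord_p(u^2-1)$ if~$p\mid u^2-1$. Twisting by the cyclotomic character gives an exact sequence
\begin{equation*}
0\to (\Q_2/\Z_2)(2)\otimes\chi \to E_u[2^\infty](1) \to (\Q_2/\Z_2)(1)\otimes\chi \to 0 ,
\end{equation*}
where the submodule is~$\mu_{2^\infty}(1)\otimes\chi$. Its inverse-limit version identifies~$\Tate_2(E_u)^{I_p}\otimes\Q_2/\Z_2$ with~$(\Q_2/\Z_2)(1)\otimes\chi$. Hence the denominator of~\eqref{qTamell} is the $2$-part of~$p^2-\chi(p)$, which is consistent with~$L_p(E_u,2)^{-1}=1\mp p^{-2}$.

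Taking~$G_{\Q_p}$-invariants in the sequence, the numerator of~\eqref{qTamell} equals the denominator times the order of the kernel~$K$ of the connecting map
\begin{equation*}
\delta\colon H^0\big(\Q_p,(\Q_2/\Z_2)(1)\otimes\chi\big)\to H^1\big(\Q_p,(\Q_2/\Z_2)(2)\otimes\chi\big).
\end{equation*}
So~$\mathrm{Tam}^0_p(T_2(2))=\#K$. The source of~$\delta$ is cyclic of order~$2^a$, where~$2^a$ is the $2$-part of~$p-\chi(p)$. Concretely, its element~$j/2^k$ lifts to a class of a $2^k$-th root of~$q^j$. Then~$\delta(j/2^k)=0$ exactly when some such root, possibly after multiplication by a root of unity, gives a point of~$E_u[2^\infty](1)$ fixed by the twisted Galois action.

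I would first show that this forces~$2^k \mid j\cdot\ord_p(q)$ on the ramified side, via the restriction to inertia of the Kummer class of~$q$. Conversely, I would show that for~$2^k\mid j\ord_p(q)$ the remaining unramified obstruction can be killed by choosing a suitable root of unity. This should give
\begin{equation*}
\#K=\min\big(2^a,\ 2^{\ord_2(\ord_p(q))}\big)
\end{equation*}
when the relevant $2$-power is small, and in general the inequality~$\#K\ge \min\big(2^a, 2^{\ord_2(\ord_p(q))}\big)$.

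The casework is then immediate. If~$\ord_p(u)$ is odd, the reduction is split and~$\ord_2(\ord_p(q))=2$. This gives~$\#K=\min(2^a,4)$, which is~$4$ for~$p\equiv1\pmod4$ and~$2$ for~$p\equiv3\pmod4$. If~$\ord_p(u^2-1)$ is odd, then~$\ord_2(\ord_p(q))=1$ and~$2^a\ge 2$ in both the split and the non-split case, so~$\#K=2$. For part~(3), larger~$\ord_2(\ord_p(q))$ can only enlarge the minimum, so the lower bound yields the stated minimality.

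The main obstacle is the precise computation of~$\delta$, and in particular showing that the unramified part of the Kummer class imposes no extra condition. That part is the unit part of~$q$, twisted by~$\chi$ and the extra cyclotomic twist. I would handle this by working directly with the explicit description of the twisted Galois action on~$\Qbar_p^\times/q^\Z$ at level~$2^k$, using that~$\Q_p(\mu_{2^\infty})/\Q_p$ is unramified and that~$\epsilon_2(\Frob_p)=p$. I would then count fixed points level by level rather than passing through abstract~$H^1$ computations.
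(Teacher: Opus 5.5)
Your argument is correct, but it reaches the values by a different mechanism than the paper.

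\textbf{The paper's route.} The paper computes the inertia invariants directly. It shows that the Tate parameter $Q$ has a fourth root (case~(1)) or a square root (case~(2)) $\widetilde Q$ lying in $\Q_p$ itself; in case~(1) this uses $1-u^2\in 1+p\Z_p$. Hence $E_u[2^\infty]^{I_p}=\mu_{2^\infty}\cdot\langle\widetilde Q\rangle/Q^{\Z}$ splits as a direct sum of $G_{\Q_p}$-modules. The ratio in~\eqref{qTamell} is then just the number of invariants in $\langle\widetilde Q\rangle/\langle Q\rangle(1)\simeq\Z/4$ or $\Z/2$, on which Frobenius acts by $\pm p$. Part~(3) is a lower bound obtained by restricting to this subgroup.

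\textbf{Your route.} You use the canonical extension and the connecting map~$\delta$. Inertia gives the upper bound, and you claim Frobenius never obstructs. This uses nothing about $q$ beyond $\ord_p(q)$. It yields the exact formula $\mathrm{Tam}^0_p(T_2(2))=2^{\min(\ord_2(p-\chi(p)),\,\ord_2(\ord_p q))}$ for every $p\mid u(u^2-1)$. So your hedge ``when the relevant $2$-power is small'' is unnecessary, since both bounds hold unconditionally, and (3) even comes with exact values. In exchange, the paper's route produces the explicit description of the inertia invariants that it reuses in Theorem~\ref{f-conditions}.

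\textbf{Closing the step you flag.} It takes one line.
\begin{itemize}
\item Take $j$ odd and pick $x\in\Q_p^{\unr}$ with $x^{2^k}=q^j$; this is possible because $2^k\mid j\ord_p(q)$.
\item Write $\Frob_p(x)=\xi x$ with $\xi\in\mu_{2^k}$.
\item Since $j/2^k$ lies in the source of $\delta$, we have $2^k\mid\chi(p)p-1$, so $x^{\chi(p)p-1}\in q^{\Z}$.
\item For $\zeta\in\mu_{2^\infty}$, the point $\zeta x$ is then fixed by the twisted action exactly when $\zeta^{\chi(p)p^2-1}=\xi^{-\chi(p)p}$. Inertia acts trivially on it, as $\chi$ and $\epsilon_2$ are unramified.
\item This equation is solvable because $\mu_{2^\infty}$ is divisible and $\chi(p)p^2\neq 1$.
\end{itemize}

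\textbf{A caution.} The solution $\zeta$ generally has order larger than $2^k$; in the split case, for instance, $\zeta\mapsto\zeta^{p^2-1}$ kills $\mu_8$. So the invariant lift of $j/2^k$ need not lie in $E_u[2^k]$, and your level-by-level fixed-point count must run over all levels, not just level $2^k$. In the paper's situation one can take $\xi=1$, precisely because $\widetilde Q\in\Q_p$.
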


\begin{proof} 
Recalling from Proposition~\ref{minimal} that $E_u$ has multiplicative reduction at any
odd prime~$p$ dividing~$ u(u^2-1) $, we have that~$\VTate{2}(E_u)^{I_p}$ is a $1$-dimensional $\Q_2$-vector space,
and~$\VTate{2}(E_u)^{I_p}/\Tate_2(E_u)^{I_p}$ is naturally identified with the group~$\mu_{2^{\infty}}$ of $2$-power roots of unity under the Tate isomorphism $E_u(\Qbar_p)\simeq \Qbar_p^{\times}/Q^{\Z}$, where~$Q$ in~$\Q_p^\times$ with~$ |Q|_p < 1 $ is the Tate parameter.
If~$ p \mid (u^2-1) $ and~$ p \equiv 3 $ modulo~4, then the reduction
is non-split, and
the natural action of~$ G_{\Q_p} $ on the right-hand side is twisted by the
non-trivial quadratic character of~$ \Gal(\Q_p(i)/\Q_p) $
(see the proof of \cite[Corollary~V.5.4]{S2}).
This character is trivial on~$ I_p $ because $\Q_p(i)/\Q_p$ is unramified.

(1)
From~$\Delta=Q\prod_{n=1}^{\infty}(1-Q^n)^{24}$,
$\Delta=\left(\frac{u}{4}\right)^4(u^2-1)^2$,
and~$\ord_p(u)$ being odd, we find~$2^2\parallel\ord_p(Q)=\ord_p(\Delta)$.
Then~$ 1 - u^2 $ is a square in~$ \Z_p $ because it is in~$ 1 + p \Z_p $, so~$ Q $ admits a fourth root~$ Q^{1/4} $ in~$ \Q_p $
and~$E_u[2^{\infty}]^{I_p}\simeq \frac{\langle Q^{1/4}\rangle}{Q^{\Z}}\oplus \mu_{2^{\infty}}$.
On the second summand, $\Frob_p$ acts as $p$, but in the twist $E_u[2^{\infty}](1)$ as $p^2$.
The order of the subgroup of invariants gives
the denominator in~\eqref{qTamell}, and the fraction itself equals the
order of the subgroup of invariants in~$ \frac{ \langle Q^{1/4} \rangle }{ Q^{\Z} } (1) $.
Because~$ \Frob_p $ acts trivially on~$ \sqrt{-1} $
if~$ p \equiv 1 $ modulo~4,  and by multiplication by~$ -1 $
if~$ p \equiv 3 $ modulo~4, the order of the invariants
is~4 in the former case and~2 in the latter.

(2)
Arguing as in~(1), we see that~$ Q $ has a square root~$ Q^{1/2} $
in~$ \Q_p $, and that~\eqref{qTamell} is equal to the order of
the subgroup of~$ G_{\Q_p} $-invariants in~$ \langle Q^{1/2} \rangle / Q^{\Z} (1) $,
which is~2.

(3) 
The denominator in~\eqref{qTamell} is still given by the order
of the subgroup of~$ G_{\Q_p} $-invariants of~$ \mu_{2^\infty} (1) $.
In~(1), $ Q $ again admits a fourth root~$ Q^{1/4} $ in~$ \Q_p $, 
and the subgroup of~$ G_{\Q_p} $-invariants of~$ \frac{ \langle Q^{1/4} \rangle }{ Q^{\Z} } (1) $
contributes to the resulting fraction in~\eqref{qTamell}.
Similar remarks apply to~(2).
\end{proof}

\begin{prop}\label{taminf} For any $u\in \Q-\{0,\pm 1\}$, $\Tam_{\infty}^0(T_2(2))=2$.
\end{prop}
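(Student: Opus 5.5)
The plan is to reduce $\Tam^0_\infty(T_2(2))$ to a Tate cohomology group of $\Gal(\C/\R)=\langle c\rangle$, and then to read that group off from the real structure of $E_u$. Since $\epsilon_2(c)=-1$, complex conjugation acts on $T_2(2)$ exactly as on $T_2$, and similarly for $V_2$. From the short exact sequence $0\to T_2\to V_2\to V_2/T_2\to 0$ of $\Z_2[\langle c\rangle]$-modules we get the exact sequence
\[
0\to T_2^+\to V_2^+\to (V_2/T_2)^+\to H^1(\langle c\rangle, T_2)\to H^1(\langle c\rangle, V_2)=0 .
\]
The last group vanishes because $V_2$ is uniquely $2$-divisible. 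Hence $\Tam^0_\infty(T_2(2))=\#H^1(\langle c\rangle,T_2)$, and we must show this order is $2$. Equivalently, we can work directly with $(V_2/T_2)^+$: by the identification $V_2/T_2\simeq E_u[2^\infty](-1)$ recalled before~\eqref{BKconj}, it is the group of $P\in E_u[2^\infty]$ with $cP=-P$. We then compute its quotient by its maximal divisible subgroup, which is the image of $V_2^+$.

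The key input is the action of $c$ on $\Tate_2(E_u)$. For this we use $H_1(E_u(\C),\Z)$, compatibly with the comparison isomorphisms fixed in Section~\ref{regBeil}. The discriminant of~\eqref{Eueq} is $\Delta=16u^4(u^2-1)^4>0$ for every $u\in\Q-\{0,\pm1\}$. So all three $2$-torsion points $0,-1,-u^2$ are real, and $E_u(\R)$ has two connected components. Equivalently, the period lattice of $E_u$ is rectangular. We will therefore choose a $\Z$-basis $\gamma^+,\gamma^-$ of $H_1(E_u(\C),\Z)$ with $c\gamma^\pm=\pm\gamma^\pm$; in the non-rectangular case only a basis with $c$ acting as $\begin{pmatrix}0&1\\1&0\end{pmatrix}$ exists. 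Tensoring with $\Z_2$, $\Tate_2(E_u)\simeq\Z_2(+)\oplus\Z_2(-)$ as $\Z_2[\langle c\rangle]$-modules, where $(\pm)$ means $c$ acts by $\pm1$. The twist by $-1$ multiplies the action by $\epsilon_2(c)^{-1}=-1$, which merely swaps the two summands. So $T_2\simeq\Z_2(-)\oplus\Z_2(+)$ as well.

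The final step is routine. $H^1(\langle c\rangle,\Z_2(+))=\ker(1+c)/\mathrm{im}(1-c)=0$, while $H^1(\langle c\rangle,\Z_2(-))=\Z_2/2\Z_2\simeq\Z/2\Z$, so $\#H^1(\langle c\rangle,T_2)=2$. Concretely, $(V_2/T_2)^+$ is $\{0,\tfrac12\}\times\Q_2/\Z_2$, and its quotient by the divisible part $\Q_2/\Z_2$ has order $2$.

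The only step needing care is the identification of $c$ on $\Tate_2(E_u)$ as diagonalizable over $\Z_2$ rather than merely over $\Q_2$. I would justify it by the standard fact that $E(\R)$ has two components exactly when $\Delta>0$: then $E(\R)\simeq S^1\times\Z/2\Z$ with full real $2$-torsion, which forces the rectangular lattice. It can also be seen directly: the real points $(0,0)$ and $(-1,0)$ give $c$-fixed $2$-torsion not lying in the image of $(1+c)E_u[2^\infty]$. Everything else is formal.
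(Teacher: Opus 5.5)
Your proof is correct and is essentially the paper's argument. Both rest on the period lattice being rectangular (all of $E_u[2]$ is real), which gives a $c$-eigenbasis $\gamma^\pm$ of $H_1(E_u(\C),\Z)$; the quotient of $(V_2/T_2)(2)^+$ by its divisible part $V_2(2)^+/T_2(2)^+$ is then visibly of order $2$, generated by the class of $(0,0)$, and your identification of this quotient with $H^1(\langle c\rangle,T_2)$ is a tidy repackaging of that same computation.

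One small point in your secondary justification: for the natural action, $(0,0)=P+cP$ with $P=(u,u(u+1))$, so only $(-1,0)$, which lies on the non-identity component of $E_u(\R)$, avoids $(1+c)E_u[2^\infty]$. This still suffices, because for a non-rectangular lattice $E_u[2^\infty]$ would be coinduced, so every $c$-fixed point would be a norm.
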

\begin{proof} Recall that $$\mathrm{Tam}^0_{\infty}(T_2(2)):=\#\left(\frac{((V_2/T_2)(2))^+}{ (V_2(2)^+/T_2(2)^+)}\right)=\# \left( \frac{E_u[2^\infty](1)^+}{\VTate{2}(1)^+ / \Tate_2(1)^+} \right).$$ A Tate twist by $r\in\Z$ multiplies the action of the generating complex conjugation in $\Gal(\C/\R)$ by $(-1)^r$. Since all the $2$-torsion of $E_u$ is defined over $\R$, the period lattice of $E_u(\C)$ has rectangular fundamental region, and $H_1(E_u(\C), \Z)=\Z\gamma^+\oplus\Z\gamma^-$, where $\gamma^{\pm}$ are eigenvectors for complex conjugation. Then $T_2(2)\simeq (\Z_2\gamma^+\oplus\Z_2\gamma^-)(1)$ and $(V_2(2)^+/T_2(2)^+)\simeq (\Q_2/\Z_2)\gamma^-(1)$. The quotient $\frac{((V_2/T_2)(2))^+}{ (V_2(2)^+/T_2(2)^+)}$ is generated by $\frac{1}{2}\gamma^+(1)$ (representing the $2$-torsion point $(0,0)$).
\end{proof}

\section{The 2-part of the Tamagawa factor at 2}\label{2Tam2}

Recall from Proposition~\ref{minimal}(3) that a global minimal Weierstrass equation for~$E_u$~is
$${y'}^2+x'y'={x'}^3+\frac{u^2}{4}{x'}^2+\frac{u^2}{16}x',$$ where we are assuming (for an integer $u$ with $|u|>1$) that $4\parallel u$, and we have good, ordinary reduction at $2$. We think of this as an equation for an integral model $\mathcal{E}_{u,\Z_2}$, with generic fibre $E_{u,\Q_2}$ and special, regular, fibre $\mathcal{E}_{u,\FF_2}$.
With notation as at the beginning of Section~\ref{regBeil} we
then have~$ 1 + 2 - a_2 = \# \mathcal{E}_u(\FF_2)=4 $, so that~$ a_2 = -1 $,
and~$ L_2(E_u, s) = (1+ 2^{-s}+2^{1-2s})^{-1} $.

In this section we let $V:=V_2=H^1_{\text{\'et}}(E_{u,\Qbar}, \Q_{2})$, and think of $V_2(2)$ as the twisted dual $V^*(1)$. (Recall from Section~4 that $V_{\ell}^*\simeq V_{\ell}(1)$ by Poincar\'e duality. This is also connected with the fact that the functional equation of $L(E_u, s)$ relates $s=0$ and $s=2$.)

We view $V$ purely as a $2$-adic representation of $G_{\Q_2}=\Gal(\Qbar_2/\Q_2)$. Let
\begin{equation} \label{Dcris-def}
\mathbf{D}(V):=D_{\cris}(V):=(B_{\cris}\otimes V)^{G_{\Q_2}}
,
\end{equation}
where $B_{\cris}$ is Fontaine's ring. We know that
$$\mathbf{D}(V)\simeq H^1_{\cris}(\mathcal{E}_{u,\FF_2})\otimes_{\Z_2}\Q_2\simeq H^1_{\dR}(E_{u,\Q_2}).$$
The first isomorphism is an instance of the Fontaine-Messing $p$-adic comparison theorem \cite[Theorem B]{FM}, the second an instance of \cite[3.4.2]{I}. Their composite is an isomorphism of filtered $\phi$-modules over $\Q_2$, where $\phi$ is a Frobenius operator.  The characteristic polynomial of $\phi$ on $\mathbf{D}(V)$ is therefore $X^2-a_2X+2=X^2+X+2$.

Since $\dim_{\Q_2}(\mathbf{D}(V))=\dim_{\Q_2}(V)$, $V$ is a crystalline representation of $G_{\Q_2}$. It is further an ordinary representation of $G_{\Q_2}$, an invariant line in $V=\Tate_2(E_u)(-1)\otimes_{\Z_2}\Q_2$ being obtained from the kernel of reduction from $\Tate_2(E_u)$ to the rank~1 $\Tate_2(\mathcal{E}_{u,\FF_2})$. 
That this kernel is a~$ \Z_2 $-module of rank~1 is a direct consequence
of \cite[Chapter~4, \S2, Theorem~2(ii)]{Fr}.

Following \cite[Appendix C]{PR}, let $t_V:=\mathbf{D}(V)/\Fil^0\mathbf{D}(V)=\{0\}$.
(When making definitions below, we shall temporarily suppress our knowledge of this.) Then by the short exact sequence in \cite[C.2.3]{PR} we have $(t_{V^*(1)})^*\simeq \mathbf{D}(V)$, hence $t_{V^*(1)}\simeq \mathbf{D}(V)$ by Poincar\'e duality \cite[3.5.2]{I}. Define
$$H^1_f(\Q_2, V):=\ker(H^1(\Q_2, V)\rightarrow H^1(\Q_2, V\otimes B_{\cris})).$$

Since $L_2(E_u, 0)^{-1}\neq 0$, the Bloch-Kato exponential map $\exp: t_V\rightarrow H^1_f(\Q_2, V)$ is an isomorphism, as in \cite[11.5]{F}. Choosing $\omega_1\in{\det}_{\Q_2}t_V$
induces a measure $\mu_{\omega_1}$ on $H^1_f(\Q_2, V)$, such that if $\Lambda$ is a $\Z_2$-lattice in the $\Q_2$-vector space $H^1_f(\Q_2, V)$ with ${\det}_{\Z_2}(\Lambda)=\Z_2((\det\exp)(\omega_1))$ then $\mu_{\omega_1}(\Lambda)=1$. It also gives a measure on $H^1_f(\Q_2, \mathbf{T})$, defined to be the pre-image of $H^1_f(\Q_2, V)$ under the natural map from $H^1(\Q_2, \mathbf{T})$ to $H^1(\Q_2, V)$. Here, $\mathbf{T}$ is a $G_{\Q_2}$-invariant $\Z_2$-lattice in $V$, which we take to be $\mathbf{T}=T_{2}:=H^1_{\text{\'et}}(E_{u,\Qbar}, \Z_{2})$. According to \cite[11.1]{F}, $H^1_f(\Q_2, \mathbf{T})$ is a finitely-generated $\Z_2$-module. Defining $\Tam_{2,\omega_1}(\mathbf{T})=\mu_{\omega_1}(H^1_f(\Q_2, \mathbf{T}))$, we may then define $\Tam^0_{2,\omega_1}(\mathbf{T})$ by the relation $$\Tam_{2,\omega_1}(\mathbf{T})=\Tam^0_{2,\omega_1}(\mathbf{T})|L_2(E_u, 0)|_2$$
from \cite[11.5]{F}.

Since $L_2(E_u, 2)^{-1}\neq 0$, we may similarly define $\Tam^0_{2,\omega_2}(\mathbf{T}^*(1))$, where $\omega_2\in {\det}_{\Q_2}t_{V^*(1)}$, noting that $\mathbf{T}^*(1)=T_2(2)$ inside $V^*(1)=V_2(2)$. 
Recalling that $t_V=\{0\}$ and $t_{V^*(1)}\simeq \mathbf{D}(V)\simeq H^1_{\dR}(E_{u,\Q_2})$, we choose $\omega_1=1$ and $\Z_2\omega_2=\det_{\Z_2}H^1_{\dR}(\mathcal{E}_{u,\Z_2})$.

\begin{prop}\label{2tam2left}
$\Tam_{2,\omega_1}^0(T_2)=2$. (Recall that throughout this section, $u$ is an integer with $|u|>1$ and $4\mid\mid u$.)
\end{prop}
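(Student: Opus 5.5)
Since $t_V=\{0\}$, the plan is to reduce the Tamagawa measure to counting a finite group, which we then compute from the explicit $4$-torsion of Proposition~\ref{torsion}.

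\textbf{Reduction to a torsion count.} The exponential map is an isomorphism $t_V\to H^1_f(\Q_2,V)$, so $H^1_f(\Q_2,V)=0$. Hence $H^1_f(\Q_2,\mathbf{T})$ is the kernel of $H^1(\Q_2,\mathbf{T})\to H^1(\Q_2,V)$. From the long exact sequence of $0\to\mathbf{T}\to V\to V/\mathbf{T}\to 0$, this kernel is $H^0(\Q_2,V/\mathbf{T})$ modulo the image of $H^0(\Q_2,V)$. The latter vanishes because $V$ has no $G_{\Q_2}$-invariants: by the characteristic polynomial $X^2+X+2$, $\phi$ has no eigenvalue $1$ on $\mathbf{D}(V)$. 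Thus
\[
H^1_f(\Q_2,\mathbf{T})\simeq H^0(\Q_2,E_u[2^\infty](-1)).
\]
With $\omega_1=1$ on the zero-dimensional space $t_V$, the measure $\mu_{\omega_1}$ of a finite group is its order, following the convention of \cite[11.5]{F} and \cite[\S5]{BK}, where torsion contributes multiplicatively. Moreover $L_2(E_u,0)=(1+1+2)^{-1}=1/4$, so $|L_2(E_u,0)|_2=4$. The claim $\Tam^0_{2,\omega_1}(T_2)=2$ is therefore equivalent to
\[
\#H^0(\Q_2,E_u[2^\infty](-1))=8 .
\]

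\textbf{Computing the local invariants.} We proceed as in Proposition~\ref{2tors}, but over $\Q_2$. First, $E_u[2]$ is rational and the twist is trivial mod~$2$, so $E_u[2]$ gives $4$ invariants. Next we go through the three cosets of Proposition~\ref{torsion}(4).
\begin{itemize}
\item The coset $(u,\pm u(u+1))+E_u[2]$ consists of $\Q_2$-rational points on which the twist acts by $\epsilon_2(\sigma)^{-1}\equiv\pm1$ mod~$4$, nontrivially. So it is not invariant.
\item Since $4\parallel u$, we have $1-u^2\equiv 1$ mod~$16$, so $\sqrt{1-u^2}\in\Q_2$. Hence the third coset $(-1+\sqrt{1-u^2},\, i(u^2-1+\sqrt{1-u^2}))+E_u[2]$ has first coordinate in $\Q_2$, and $\sigma$ acts on it through its action on $i$. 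That action is the sign $\epsilon_2(\sigma)$ mod~$4$, and the twist by $\epsilon_2^{-1}$ cancels it exactly. So this coset is invariant.
\item The second coset $Q+E_u[2]$ involves $\sqrt{u^2-1}=i\sqrt{1-u^2}$ in its first coordinate, which is not in $\Q_2$. Since the twist only acts by $\pm1$ and so does not move first coordinates, this coset is not invariant.
\end{itemize}
This gives exactly $8$ invariants in $E_u[4](-1)$, forming a group $\Z/4\times\Z/2$.

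\textbf{Main obstacle: excluding $8$-torsion.} We must show that no point of order~$8$ halving an element of the invariant coset is invariant. The first thing to try is the ordinary filtration $0\to\hat F[2^\infty]\to E_u[2^\infty]\to\tilde E[2^\infty]\to 0$. Here $\tilde E(\F_2)$ has order~$4$, and Frobenius acts on the étale quotient by the unit root $\alpha\equiv 5$ mod~$8$ of $X^2+X+2$; on the formal part $G_{\Q_2}$ acts by $\epsilon_2$ times an unramified character. After twisting by $\epsilon_2^{-1}$, the formal part becomes unramified with Frobenius $\alpha^{-1}$ (up to the duality normalisation), so its invariants are small. On the étale quotient, the ramified character $\epsilon_2^{-1}$ restricted to inertia cuts the invariants down to the $2$-torsion of $\tilde E[2^\infty]$. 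Bounding both pieces should cap the total at $8$.

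If the filtration argument gets messy, the fallback is a direct computation. We would halve a point of the third coset explicitly on the model of Proposition~\ref{minimal}(3) and check that the resulting $8$-torsion points are not defined over the field on which $\epsilon_2^{-1}$ could compensate. With the count equal to $8$, we get $\Tam^0_{2,\omega_1}(T_2)=8/4=2$.
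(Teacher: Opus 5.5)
Your proposal is correct and is essentially the paper's proof. You use $t_V=0$ and $H^0(\Q_2,V)=0$ to reduce to $\#H^0(\Q_2,E_u[2^\infty](-1))=8$. You get $\ge 8$ from $E_u[2]$ together with the order-$4$ point of the third coset, which is defined over $\Q_2(i)$. You get $\le 8$ from the ordinary filtration, which the paper also uses: the twisted formal part, where Frobenius acts by the inverse of the unit root and so by something $\equiv 5 \pmod 8$, has invariants of order exactly $4$, and the twisted \'etale quotient has invariants of order exactly $2$.

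Once you write out that $4\cdot 2$ bound, your checks of the first two cosets and the fallback halving computation are unnecessary. Your use of the crystalline Frobenius eigenvalues to get $V^{G_{\Q_2}}=0$ is a valid substitute for the paper's composition-factor argument.
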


\begin{proof} First we look at the composition factors for the action of $G_{\Q_2}$ on $V$. On $\Tate_2(\mathcal{E}_{u,\FF_2})$, $G_{\Q_2}$ acts via an unramified character $\lambda_{c}: \Frob_p\mapsto c$, the unit root of Frobenius.
Because~$ c^2 + c + 2 = 0 $, it is in~$ 5 + 8 \Z_2 $.
On the kernel of reduction from $\Tate_2(E_u)$ to $\Tate_2(\mathcal{E}_{u,\FF_2})$, it must act via $\epsilon_2\lambda_{c}^{-1}$, so that on the determinant of $\Tate_2(E_u)$ it is $\epsilon_2$, the $2$-adic cyclotomic character. Taking the $(-1)$-twist, it follows that the composition factors for $V$ are $\epsilon_2^{-1}\lambda_{c}$ and $\lambda_{c}^{-1}$, hence that $H^0(G_{\Q_2}, V)=\{0\}$. Since $t_V=\{0\}$, $H^1_f(\Q_2, V)$ is trivial (by the Bloch-Kato exponential isomorphism), so $$H^1_f(\Q_2, \mathbf{T})=H^1(\Q_2, \mathbf{T})_{\tor}.$$ Then using the long exact sequence associated to $ 0 \to \mathbf{T} \to V \to V/\mathbf{T} \to 0 $, and~$H^0(G_{\Q_2}, V)=\{0\}$, we get 
$$H^1(\Q_2, \mathbf{T})_{\tor}\simeq H^0(\Q_2, V/\mathbf{T})=H^0(\Q_2, E[2^{\infty}](-1)),$$ and $$\Tam_{2,\omega_1}(\mathbf{T})=\mu_{\omega_1}(H^1_f(\Q_2, \mathbf{T}))=\#H^1(\Q_2, \mathbf{T})_{\tor}=\#H^0(\Q_2, E[2^{\infty}](-1)).$$

That~$ c $ is in~$ 5 + 8 \Z_2 $ shows~$\lambda_{c}^{-1}$ is trivial modulo~4 but not modulo~8, and~$\epsilon_2^{-1}\lambda_{c}$ is trivial modulo~2 but not modulo~4,
which implies $\#H^0(\Q_2, E[2^{\infty}](-1)) \le 8$.
The subgroup~$E[2](-1)$ of~$ E[2^\infty](-1) $ is in~$ H^0(\Q_2, E[2^{\infty}](-1)) $
by part~(1) of Proposition~\ref{torsion}.
From part~(4) we know that~$ R = (-1 + \sqrt{1-u^2}, i(u^2-1 + \sqrt{1-u^2}) ) $
has order~4. Because~$ 1 - u^2 $ is congruent to~1 modulo~8, it is a square in~$ \Z_2 $,
and~$ R $ is defined over~$ \Q_2(i) $. So the action 
of~$ G_{\Q_2} $ on~$ \langle R \rangle (-1) $ factors through~$ \Gal(\Q_2(i)/\Q_2) $.
It is easily seen to be trivial, and we conclude that~$ H^0(G_{\Q_2}, E[2^\infty])$ has
order~8.

Now~$\Tam_{2,\omega_1}(\mathbf{T})=8$ and~$L_2(E_u, 0)^{-1}=4$, so that~$\Tam^0_{2,\omega_1}(\mathbf{T})=\frac{8}{4}=2$.
\end{proof}

The original conjecture of Bloch and Kato \cite{BK} applies to $L$-values such as our $L(E_u, 2)$, where the $L$-function is evaluated at an integer point in the domain of convergence of the Dirichlet series. Using the functional equation, this would also predict the leading term at partner points on the left, $s=0$ in our case. Fontaine and Perrin-Riou \cite{FP} formulated a uniform conjecture that applies to all integer points, independent of each other, equivalent to that of Bloch and Kato in the domain of convergence. In \cite[Appendix C]{PR}, they address the question of whether the predictions of their conjecture, applied to partner points, are compatible with the functional equation. In \cite[C.3.3]{PR}, they show that this would follow from a conjecture of Deligne on determinants of motives (known in our case) and (for all primes) a new conjecture $C_{EP}(V)$, stated in \cite[C.2.9]{PR}, on the ratio $\frac{\Tam^0_{p, \omega_1}(\mathbf{T})}{\Tam^0_{p,\omega_2}(\mathbf{T}^*(1))}$.
As explained in \cite[C.2.10]{PR}, $C_{EP}(V)$ was already known in the case of ordinary representations (in which we find ourselves), thanks to work of Perrin-Riou \cite{PR2}. Having just found $\Tam^0_{2,\omega_1}(T_2)$, we can now deduce the value of $\Tam^0_{2,\omega_2}(T_2(2))$, using $C_{EP}(V)$.
\begin{prop}\label{2tam2}
$\Tam_{2,\omega_2}^0(T_2(2))=2$. (Recall that throughout this section, $u$ is an integer with $|u|>1$ and $4\mid\mid u$.)
\end{prop}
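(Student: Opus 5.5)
The plan is to obtain $\Tam^0_{2,\omega_2}(T_2(2))$ from Proposition~\ref{2tam2left}, using the compatibility statement $C_{EP}(V)$ of \cite[C.2.9]{PR}. By \cite[C.2.10]{PR} and \cite{PR2}, $C_{EP}(V)$ is known here because $V$ is crystalline and ordinary at~$2$. I would not compute $\mu_{\omega_2}(H^1_f(\Q_2, T_2(2)))$ directly. Doing so would need an integral comparison between $H^1_{\dR}(\mathcal{E}_{u,\Z_2})$ and $H^1_f(\Q_2,T_2(2))$ via the exponential, and at $p=2$ the Fontaine--Laffaille-type integral theory is delicate.

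First I would write out $C_{EP}(V)$ explicitly for $\mathbf{T}=T_2$. It expresses the ratio $\Tam^0_{2,\omega_1}(\mathbf{T})/\Tam^0_{2,\omega_2}(\mathbf{T}^*(1))$ as a product of three kinds of factor. The first is a $2$-adic absolute value of a $\Gamma^*$-factor attached to the Hodge--Tate weights of $V$. The second comes from the $\#H^0$ terms $\#H^0(\Q_2, V/\mathbf{T})$ and $\#H^0(\Q_2, V^*(1)/\mathbf{T}^*(1))$, in the precise way they enter the Euler--Poincaré characteristic formula. The third is a discrepancy factor $|\omega_1\otimes\omega_2 : \omega_{\mathbf{T}}|_2$. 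This factor compares $\omega_1\otimes\omega_2$ with the canonical generator of $\det t_V\otimes\det t_{V^*(1)}$ induced by $\det_{\Z_2}\mathbf{D}(\mathbf{T})$ and the duality $t_{V^*(1)}\simeq \mathbf{D}(V)^*$ of \cite[C.2.3]{PR}.

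Next I would evaluate each factor. The Hodge--Tate weights of $V=H^1(-1)$ are positive (namely $1$ and~$2$ with the relevant convention), so $\Gamma^*(V)$ is a product of factorials like $0!\,1!$; I would check it is a $2$-adic unit, or else record its valuation. We already know $\#H^0(\Q_2,V/\mathbf{T})=\#H^0(\Q_2,E_u[2^\infty](-1))=8$ from the proof of Proposition~\ref{2tam2left}. I would compute $\#H^0(\Q_2,E_u[2^\infty](1))$ the same way, using the composition factors $\epsilon_2\lambda_c$ and $\epsilon_2^{2}\lambda_c^{-1}$ of $\Tate_2(E_u)(1)$ with $c\in 5+8\Z_2$, together with the explicit $4$-torsion point $R$ defined over $\Q_2(i)$. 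For the discrepancy factor, $\omega_1=1$ in $\det t_V=\Q_2$, and $\omega_2$ generates $\det_{\Z_2}H^1_{\dR}(\mathcal{E}_{u,\Z_2})$. Since $\mathbf{D}(\mathbf{T})$ corresponds to $H^1_{\cris}(\mathcal{E}_{u,\F_2})$, which is self-dual under the integral Poincaré pairing of \cite[3.5.2]{I}, I expect the factor to be~$1$.

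Combining these with $\Tam^0_{2,\omega_1}(T_2)=2$ should give $\Tam^0_{2,\omega_2}(T_2(2))=2$. The main obstacle will be getting the normalisations in $C_{EP}(V)$ exactly right: which $H^0$ terms occur, whether the $L_2$-factors are already absorbed into the $\Tam^0$ normalisations, and the sign of the Hodge--Tate convention. A second obstacle is justifying that the integral structure $\omega_2$ matches the one implicit in Perrin-Riou's $\mathbf{D}(\mathbf{T})$ at $p=2$. As a consistency check, the $\#H^0$ contributions should cancel against the ratio $|L_2(E_u,0)|_2/|L_2(E_u,2)|_2$ built into the $\Tam^0$ normalisation, here $4$ against $\ord_2(L_2(E_u,2)^{-1})=0$, leaving a ratio equal to~$1$.
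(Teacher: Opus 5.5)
Your overall route is the paper's: deduce the value from Proposition~\ref{2tam2left} via $C_{EP}(V)$, which Perrin-Riou proved for ordinary $V$, and your $\Gamma^*$-factor is indeed~$1$. However, the form of $C_{EP}(V)$ you plan to use is not the right one, and the one factor that carries all the content is left as an expectation.

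Substituting \cite[C.2.6]{PR} into \cite[C.2.9]{PR} gives $\Z_2\,\Tam^0_{2,\omega_1}(\mathbf{T})/\Tam^0_{2,\omega_2}(\mathbf{T}^*(1))=\Z_2\bigl(\prod_j\Gamma^*(-j)^{-\dim\mathrm{gr}^j\mathbf{D}(V)}\bigr)\eta_V(\omega)$. Here $\omega=\omega_2\otimes\omega_{\mathbf T}$ and $\Z_2\omega_{\mathbf T}=\det_{\Z_2}\mathbf T$.

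\begin{itemize}
\item There are no separate $\#H^0$ terms: the torsion $H^0(\Q_2,V/\mathbf T)$ already sits inside $\Tam_{2,\omega_1}(\mathbf T)$.
\item The factor $\eta_V(\omega)$ is the $2$-adic absolute value of $\tilde\xi_V(\omega)/(|d_K|^{\dim V/2}\epsilon(V,\psi_{0,K},\mu_{0,K}))$. It compares the \'etale lattice $\det_{\Z_2}\mathbf T$ with $\omega_2$ through the $B_{\cris}$-period map $\tilde\xi_V$, and it involves the local $\epsilon$-factor, which your plan omits entirely.
\item It is not a comparison with an integral Dieudonn\'e lattice $\mathbf D(\mathbf T)$. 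At $p=2$, with Hodge--Tate weights $0$ and $1$, Fontaine--Laffaille theory does not even provide such a lattice, for the same reason you gave for avoiding the direct computation.
\end{itemize}

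Your consistency check is also numerically wrong.
\begin{itemize}
\item $L_2(E_u,2)^{-1}=1+\tfrac14+\tfrac18=\tfrac{11}{8}$, so $\ord_2(L_2(E_u,2)^{-1})=-3$, not $0$.
\item $\#H^0(\Q_2,E_u[2^\infty](1))=8=\#H^0(\Q_2,E_u[2^\infty](-1))$, because the point $R$ defined over $\Q_2(i)$ is invariant in both twists. So the $H^0$ terms cannot absorb a factor~$4$.
\item The correct check is $\Tam_{2,\omega_1}(T_2)/\Tam_{2,\omega_2}(T_2(2))=|L_2(E_u,0)|_2/|L_2(E_u,2)|_2=32$.
\end{itemize}

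The missing idea is how to prove that $\tilde\xi_V(\omega)$ lies in $\Z_2^\times$. Self-duality of $H^1_{\cris}(\mathcal{E}_{u,\F_2})$ controls only the crystalline side. It says nothing about where $\det_{\Z_2}\mathbf T$ goes under the comparison isomorphism, so ``I expect the factor to be~$1$'' is exactly the statement that has to be proved.

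The paper handles this by passing to $H^2$. The Fontaine--Messing comparison is compatible with cup products and trace maps, so on $\det H^1$ it is controlled by the comparison on $H^2\simeq\Q_2(-1)$. Then:
\begin{itemize}
\item integral \'etale Poincar\'e duality sends $\omega_{\mathbf T}$ to a generator of $t^{-1}\Z_2$;
\item integral crystalline Poincar\'e duality \cite[3.5.4]{I} does the same on the $\omega_2$ side;
\item the \'etale and crystalline cycle classes of a point correspond \cite[III, 6.3]{FM}.
\end{itemize}
Hence $\tilde\xi_V(\omega)$ is a unit. Combined with $|d_{\Q_2}|=1$ and the $\epsilon$-factor being a unit (since $E_u$ has good reduction at~$2$), this gives $\eta_V(\omega)=1$, so the ratio of the $\Tam^0$'s is~$1$.
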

Note that in \S \ref{BlKa} we called this $\Tam_{2,\omega}^0(T_2(2))$. Our use here of the notation $\omega_1$ and $\omega_2$ is in harmony with \cite{PR}.
\begin{proof} It suffices to show that $\frac{\Tam^0_{p, \omega_1}(\mathbf{T})}{\Tam^0_{p,\omega_2}(\mathbf{T}^*(1))}=1$. Substituting the proposition in \cite[C.2.6]{PR} into $C_{EP}(V)$ in \cite[C.2.9]{PR} produces 
$$\Z_2\,\frac{\Tam^0_{p, \omega_1}(\mathbf{T})}{\Tam^0_{p,\omega_2}(\mathbf{T}^*(1))}=\Z_2\left(\prod_j \Gamma^*(-j)^{-\dim\mathrm{gr}^j\mathbf{D}(V)}\right)\eta_V(\omega),$$
where $\omega:=\omega_2\otimes\omega_{\mathbf{T}}$, with $\Z_2\omega_{\mathbf{T}}:={\det}_{\Z_2}(\mathbf{T})$, and $\eta_V(\omega)\in 2^{\Z}$ is as in \cite[C.2.8]{PR} (with more detail below). 

Also $$\Gamma^*(-j):=\begin{cases} (j-1)! & \text{ if } j>0;\\(-1)^j((-j)!)^{-1} & \text{ if } j\leq 0.\end{cases}$$
For us, $\prod_j \Gamma^*(-j)^{-\dim\mathrm{gr}^j\mathbf{D}(V)}=1^{-1}1^{-1}=1$ (a product over $j=0,1$, using $\mathbf{D}(V)\simeq H^1_{\dR}(E_{u,\Q_2})$), so it suffices to prove that $\eta_V(\omega)=1$.

Let $t$ be a generator of $\Z_2(1)$ inside $B_{\cris}$. Let $\mathbf{C}$ be the completion of an algebraic closure of $\Q_2$, and note that $\Fil^iB_{\cris}/\Fil^{i+1}B_{\cris}\simeq \mathbf{C}(i)$, for any $i\in\Z$. The Fontaine-Messing comparison isomorphism
$$B_{\cris}\otimes(H^*_{\text{\'et}}(E_{u,\overline{\Q}_2}, \Q_2))\simeq B_{\cris}\otimes(H^*_{\cris}(\mathcal{E}_{u,\FF_2})\otimes_{\Z_2}\Q_2)$$
(part of \cite[Theorem B]{FM}, given that $E_u$ is ``admissible'' by \cite[Theorem A]{FM}) is compatible with products and filtrations. Identifying $H^1_{\cris}(\mathcal{E}_{u,\FF_2})$ with $H^1_{\dR}(E_{u,\Q_2})$, choose a basis $\{w_0, w_1\}$ with $w_i\in\Fil^iH^1_{\dR}(E_{u,\Q_2})$. In the image of any element of $H^*_{\text{\'et}}(E_{u,\overline{\Q}_2}, \Q_2)$, the coefficient of $w_1$ must lie in $\Fil^{-1}B_{\cris}$. Thus by restricting the comparison map to $H^*_{\text{\'et}}(E_{u,\overline{\Q}_2}, \Q_2)$, and in the determinant of $H^1$ projecting coefficients to $\Fil^{-1}B_{\cris}/\Fil^0B_{\cris}\simeq \mathbf{C}(-1)$, we get a commutative diagram
$$\begin{CD}\det_{\Q_2}(V) @>>> \det_{\Q_2}\mathbf{D}(V)\otimes\mathbf{C}(-1) \\ @VVV @VVV\\\Q_2(-1) @>>>\mathbf{C}(-1).\end{CD}$$
The vertical maps are cup products composed with trace maps. 
The top row, tensored with $ \det_{\Q_2}\mathbf{D}(V)^*$, factors through the $\Q_2$-linear injection $$\xi_V: \det_{\Q_2}\mathbf{D}(V)^*\otimes \det_{\Q_2}(V)\rightarrow \Qbar_2 t^{-1}$$ of \cite[C.2.7]{PR},
and following loc. cit. we set~$\tilde{\xi}_V:=t \xi_V$. (For us, $t_H(V)=-1$.)
Down the right-hand side, $\Z_2^{\times}\omega_2^{-1}t^{-1}\mapsto \Z_2^{\times}t^{-1}$, by Poincar\'e duality in crystalline cohomology with $\Z_2$-coefficients \cite[3.5.4]{I}, while down the left-hand
side $\omega_{\mathbf{T}}\mapsto t^{-1}\Z_2$, by Poincar\'e duality in \'etale cohomology with $\Z_2$-coefficients.  Along the bottom row, $\Z_2^{\times}t^{-1}\mapsto\Z_2^{\times}t^{-1}$, since by \cite[III, 6.3]{FM} the \'etale cycle class of a point maps to the crystalline cycle class of a point. It follows that along the top, $\omega_{\mathbf{T}}\mapsto \Z_2^{\times}\omega_2^{-1}t^{-1}$.

It follows immediately from this (reading it as $\omega=\omega_2\otimes\omega_{\mathbf{T}}\mapsto t^{-1}\Z_2^{\times}$) that $\tilde{\xi}_V(\omega)$ in \cite[C.2.7]{PR} is a unit. In the notation of \cite[C.2.8]{PR},
$\eta_V(\omega)$ is the $2$-adic absolute value of $\frac{\tilde{\xi}_V(\omega)}{|d_K|^{\dim(V)/2}\epsilon(V,\psi_{0,K},\mu_{0,K})},$ but for us $K=\Q_2$ has trivial discriminant, and the $\epsilon$-factor is a unit because $2$, as a prime of good reduction, does not divide the conductor. Hence $\eta_V(\omega)=1$, as required.
\end{proof}
\begin{remar}\label{BK4.1iii} The closest together $i\leq 0$ and $j\geq 1$ such that $\Fil^i\mathbf{D}(V(2))=\mathbf{D}(V(2))$ and $\Fil^j\mathbf{D}(V(2))=\{0\}$ are $i=-2, j=1$. With $p=2$, they fail to satisfy the condition $(*): j-i<p$ from \cite[Theorem 4.1(iii)]{BK}, which therefore fails to show that $\Tam^0_{2,\omega_2}(T_2(2))=1$.
\end{remar}

\section{The 2-part of the Bloch-Kato Selmer group}\label{selmer}

In this section $V:=V_{\ell}=H^1_{\text{\'et}}(E_{u,\Qbar}, \Q_{\ell})$ and $T=T_{\ell}:=H^1_{\text{\'et}}(E_{u,\Qbar}, \Z_{\ell})$. 
We turn our attention now to the task of constructing elements in the group $H^1_f(\Q, V/T)=H^1_f(\Q, E_u[\ell^{\infty}](-1))$
when $\ell=2$, as its order appears in the numerator of the right-hand
side of (\ref{BKconj}).
In fact, we shall describe its 2-torsion completely in Corollary~\ref{2torscor}.

The definition of~$ H_f^1(\Q, V/T) $, for general~$ \ell $, is as follows (see \cite[(3.7)]{BK}).
First we define (including for $p=\infty$)
$$H^1_f(\Q_p, V):=\begin{cases} \ker(H^1(\Q_p, V)\rightarrow H^1(I_p, V)) & p\neq\ell;\\
\ker(H^1(\Q_p, V)\rightarrow H^1(\Q_p, V\otimes B_{\cris}) & p=\ell,\end{cases}$$
where $I_p \subseteq G_{\Q_p}$ is the inertia subgroup.
Then we let~$H^1_f(\Q_p, V/T)$ be the image of~$H^1_f(\Q_p, V)$ under the natural map from $H^1(\Q_p, V)$ to $H^1(\Q_p, V/T)$.
We also set
$$H^1_f(\Q, V/T):=\cap_p \, \res_p^{-1} (H^1_f(\Q_p, V/T)),$$ where~$ \res_p : H^1(\Q, V/T) \to H^1(\Q_p, V/T) $
is the restriction map.

It follows from the finite generation of $H^1_f(\Q, T)$ and
the finiteness of~$ \Sh $, as discussed in~11.1 and~11.2 of~\cite{F},
that~$ H_f^1(\Q, V/T) $ is isomorphic to a finite direct sum of
copies of~$ \Q_{\ell} / \Z_{\ell} $ and cyclic groups of~$ \ell $-power
order.

For~$ p \ne \ell , \infty$ the conditions above can be interpreted using the inflation and restriction
maps in the commutative diagram
\begin{equation}
\begin{split} \label{infCD}
\xymatrix{
0 \ar[r] & H^1_f(\Q_p, V) \ar[r]\ar[d] &  H^1(\Q_p ,  V) \ar[r]\ar[d] & H^1(I_p ,  V) \ar[d]
\\
0 \ar[r] & H^1(\F_p , (V/T)^{I_p} ) \ar[r] &  H^1(\Q_p ,  V/T) \ar[r] & H^1(I_p ,  V/T) 
}
\end{split}
\end{equation}
with exact rows, where we used that~$ G_{\Q_p} /I_p \simeq G_{\F_p} $,
and identified~$H^1_f(\Q_p, V)$ with~$ H^1(\F_p ,  V^{I_p} ) $.
The leftmost vertical map is induced
by the composition of the surjection~$ V^{I_p} \to V^{I_p} / T^{I_p} $
and the injection~$ V^{I_p} / T^{I_p} \to (V / T)^{I_p} $.
Therefore, for an element of~$ H^1(\Q_p, V/T) $ to be in~$ H_f^1(\Q_p, V/T) $
it has to become trivial in~$ H^1(I_p, V/T) $, so that it comes
from an element in~$ H^1(\F_p , (V/T)^{I_p} ) $, and this second
element has to become trivial when replacing the coefficients
with the cokernel of the injection~$ V^{I_p} / T^{I_p} \to (V/T)^{I_p} $.
We shall refer to these as the \emph{row} and \emph{column} condition,
respectively. Combining the corollary on~\cite[p.261]{Ta} with~II.3.3a in \cite{Se}
shows that~$ G_{\F_p} $ has cohomological dimension~1, 
so that~$ H^2(\F_p, T^{I_p}) = 0 $, hence~$H^1(\F_p, V^{I_{p}})$ surjects onto~$H^1(\F_p, V^{I_p} / T^{I_p} )$.
Therefore the two conditions we formulated are necessary and sufficient.
We shall use these in the proof of Theorem~\ref{f-conditions},
so that we traverse the diagram~\eqref{infCD} in a different way
compared to that in the definition of~$ H_f^1(\Q, V/T) $.

We note in passing that the Tate twist is irrelevant for the action of~$ I_p $ if~$ p \ne \ell , \infty $.
From the discussion on \cite[p.382]{S2}, which applies equally
well to~$ \Tate_{\ell} $ as to~$ \VTate{\ell} $,
we then see that~$ V^{I_p} / T^{I_p} $
is the subgroup of~$ (V/T)^{I_p} = ( E_u[\ell^\infty](-1))^{I_p} $ consisting
of points that hit the 0-component of the regular minimal
model of~$ E_u $ at~$ p $.

\smallskip

We now take~$ \ell = 2 $ for the remainder of this section.

\begin{lem} \label{2kernel}
For~$u \ne 0, \pm1$ in $\Q$, the 2-torsion in~$  H^1(\Q , E_u[2^\infty](-1)) $ is equal to the
image of the map~$ H^1(\Q , E_u[2](-1)) \to H^1(\Q , E_u[2^\infty](-1)) $
obtained by extension of the coefficients.
If~$ |u^2-1| $ is not a square in $\Q$ then 
this image is the quotient of~$ H^1(\Q , E_u[2](-1)) $
by a subgroup of order~4,
which is obtained from~$ H^1(\Gal(\Q(\sqrt{u^2-1}, i) / \Q) , \langle (0,0) \rangle (-1) ) $
by inflation and extension of the coefficients.
\end{lem}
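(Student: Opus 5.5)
We will work with the short exact sequence of $G_\Q$-modules
\begin{equation*}
0 \to E_u[2](-1) \to E_u[2^\infty](-1) \xrightarrow{\,2\,} E_u[2^\infty](-1) \to 0
\end{equation*}
and the associated long exact cohomology sequence
\begin{equation*}
H^0(\Q, E_u[2^\infty](-1)) \xrightarrow{\,2\,} H^0(\Q, E_u[2^\infty](-1)) \xrightarrow{\,\delta\,} H^1(\Q, E_u[2](-1)) \to H^1(\Q, E_u[2^\infty](-1)) \xrightarrow{\,2\,} H^1(\Q, E_u[2^\infty](-1)).
\end{equation*}
By exactness at the third and fourth terms, the image of $H^1(\Q, E_u[2](-1))$ is exactly the kernel of multiplication by~$2$, that is, the $2$-torsion. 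This proves the first assertion for every $u \ne 0,\pm1$.

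For the second assertion we identify the kernel of extension of coefficients with $\mathrm{coker}(2)$ on $H^0$, via the connecting map~$\delta$. Assume $|u^2-1|$ is not a square. By Proposition~\ref{2tors} (with twist $-1$), $H^0(\Q, E_u[2^\infty](-1))$ is non-cyclic of order~$4$. From its proof, this group equals $E_u[2](-1)$, so multiplication by~$2$ on it is zero. Hence $\delta$ is injective with image of order~$4$, and the image of extension of coefficients is the quotient of $H^1(\Q, E_u[2](-1))$ by this order-$4$ subgroup.

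It remains to describe $\delta(P)$ for $P \in E_u[2]$ explicitly. Choose $R$ with $2R = P$; then $\delta(P)$ is the cocycle $\sigma \mapsto \sigma * R - R$, where $*$ denotes the twisted action $\sigma * R = \epsilon_2(\sigma)^{-1}\sigma(R)$. We treat the three non-zero $P$ separately.

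\emph{The case $P=(0,0)$.} Take $R=(u,u(u+1))$, which is rational by Proposition~\ref{torsion}(2). Then $\sigma * R - R$ equals $0$ or $-2R = (0,0)$, according to whether $\epsilon_2(\sigma) \equiv \pm1 \pmod 4$, that is, according to the action of $\sigma$ on~$i$. So $\delta((0,0))$ is inflated from $\Gal(\Q(i)/\Q)$ with values in $\langle(0,0)\rangle(-1)$.

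\emph{The cases $P=(-u^2,0)$ and $P=(-1,0)$.} Take $R=Q$, respectively $R$ the point in the third coset of Proposition~\ref{torsion}(4). These are defined over $\Q(\sqrt{u^2-1}, i)$; note $\sqrt{1-u^2} = i\sqrt{u^2-1}$. The main step, which needs the most care, is a direct check that $\sigma * R - R$ always lies in $\langle (0,0)\rangle$. For the $x$-coordinate, the twist only changes $R$ by a sign, so $x(\sigma R)=x(R)$ exactly when $\sigma$ fixes the relevant square root. When it does not, $\sigma R$ lies in the conjugate point's coset $R + (0,0)$ up to sign, which one verifies from the explicit coordinates in Proposition~\ref{torsion}(3),(4).

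Given this, the three classes $\delta(P)$ are cocycles on $\Gal(\Q(\sqrt{u^2-1}, i)/\Q)$ with values in $\langle(0,0)\rangle(-1)$. Since $(0,0)$ is rational and the twist acts trivially on $2$-torsion, the Galois action on $\langle(0,0)\rangle(-1)$ is trivial, so these cocycles are homomorphisms. There are four such homomorphisms, and the injectivity of $\delta$ shows they are distinct, so the subgroup of order~$4$ is precisely the image of
\begin{equation*}
H^1(\Gal(\Q(\sqrt{u^2-1}, i)/\Q), \langle(0,0)\rangle(-1))
\end{equation*}
under inflation and extension of coefficients. The field has degree~$4$ because $|u^2-1|$ is not a square, so this cohomology group indeed has order~$4$.
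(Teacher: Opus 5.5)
Your proposal is correct and is essentially the paper's proof: the same long exact sequence, injectivity of $\delta$ on $H^0(\Q,E_u[2^\infty](-1)) = E_u[2](-1)$ via Proposition~\ref{2tors}, and explicit connecting cocycles, with your counting argument for the order-$4$ image a valid way to finish. The step you only sketch, $\sigma * R - R \in \langle (0,0)\rangle$, turns on the sign of the $y$-coordinate rather than on $x$-coordinates (an element fixing $\sqrt{u^2-1}$ but not $i$ sends $R$ to $-R$, so only the twist rules out the value $-2R=(-u^2,0)$); the paper settles it for the point $R$ with $2R=(-u^2,0)$ by noting that the twisted action on $R$ factors through $\Gal(\Q(\sqrt{u^2-1})/\Q)=\langle\tau\rangle$ and that $(0,0)$, $R$, $-\tau(R)$ lie on the line $y=i\sqrt{u^2-1}\,x$, whence $\tau(R)=R+(0,0)$, and the case $(-1,0)$ then follows from additivity of $\delta$ without a separate check.
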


\begin{proof}
From the long exact sequence in $G_{\Q}$-cohomology associated
to the short exact sequence
\begin{equation} \label{sos2mult}
0\rightarrow E_u[2](-1)\rightarrow E_u[2^{\infty}](-1)\xrightarrow{2} E_u[2^{\infty}](-1)\rightarrow  0
,
\end{equation}
we get the description of the 2-torsion as the stated image, and see
that the kernel of the natural map~$ H^1(\Q , E_u[2](-1)) \to H^1(\Q , E_u[2^\infty](-1)) $
is the image of~$ H^0(\Q,  E_u[2^{\infty}](-1)) $ under the connecting homomorphism.
If~$ |u^2-1| $ is not a square, then using Proposition~\ref{2tors}, we have $ H^0(\Q,  E_u[2^{\infty}](-1))=E_u[2](-1)$, and obtain the following description of its image. Not writing the Tate twist at the level of elements,
we have~$ 2 Q = (0, 0) $ in~$ E_u[2](-1) $ for~$ Q = (u, u(u+1)) $, and
the action of~$ G_\Q $ on~$ Q $ factors through~$ \Gal(\Q(i)/\Q) = \langle \sigma \rangle $.
The image of~$ (0,0) $ under the connecting homomorphism maps~$ \sigma $
to~$ \sigma(Q) - Q = (0,0) $ because~$ \sigma(Q) = - Q $ due
to the Tate twist.
With~$ R = (-u^2 + u \sqrt{u^2-1} , i ( u (u^2 - 1) - u^2 \sqrt{u^2-1})) $,
we have~$ 2 R = (-u^2,0) $ in~$ E_u[2](-1) $. The action of~$ G_\Q $
on~$ R $ factorises through~$ \Gal(\Q(\sqrt{u^2-1},i) / \Q) $
but, in fact, also through~$ \Gal(\Q(\sqrt{u^2-1}) / \Q) = \langle \tau \rangle $ due to the
Tate twist. Then~$ \tau(R) = R + (0,0) $
because~$ (0,0) $,~$ R $, and $  - \tau(R) = (-u^2 - u \sqrt{u^2-1} , - i ( u (u^2 - 1) + u^2 \sqrt{u^2-1})) $
as points of~$ E_u[4] $ all lie on the line~$y=i\sqrt{u^2-1} \, x$.
It follows that the image of~$ (-u^2,0) $ under the connecting homomorphism maps~$ \tau $ to~$ (0,0) $.
\end{proof}

In order to describe the 2-torsion of~$ H_f^1(\Q, E_u[2^\infty](-1) ) $
in Corollary~\ref{2torscor} below, we employ an explicit
description of~$ H^1(\Q, E_u[2](-1) ) $, and use this to determine,
for every~$ p $, when the resulting
elements in~$ H^1(\Q, E_u[2^\infty](-1) ) $ under~$ \res_p $
map to~$ H_f^1(\Q_p, E_u[2^\infty](-1) ) $. 
We do this under the assumption that~$ u $ is an integer
congruent to~4 modulo~8, so that we have the result of Proposition~\ref{minimal};
in particular, the curve~$ E_u $ has good, ordinary reduction
at the prime~2.
In order to avoid more complicated statements in parts~(3) and~(4)
we further assume that every prime number that occurs in the factorisation
of~$ u(u^2-1)/4 $ has odd exponent. Then the condition
of Lemma~\ref{2kernel} is also satisfied.

For~$ D $ in~$ \Q^\times $, Kummer theory gives a homomorphism~$ G_\Q \to \{\pm1\} $.
Identifying the latter group with~$ \langle (0,0) \rangle (-1) $
in the only possible way, we obtain an element~$ g = g_D $ in~$ H^1(\Q, \langle (0,0) \rangle (-1) ) $.
Similarly, for~$ D' $ in~$ \Q^\times $, using an identification
with~$ \langle (-1,0) \rangle (-1) $ we obtain an element~$ h = h_{D'} $ in~$ H^1(\Q, \langle (-1,0) \rangle (-1) ) $.
With~$ E_u[2] = \langle (0,0) \rangle \oplus \langle (-1,0) \rangle $,
we then have~$ g+h $ in~$ H^1(\Q, E_u[2] (-1) ) $.
This way we identify~$ H^1(\Q, E_u[2] (-1) ) $ with~$ \Q^\times/2 \times \Q^\times/2 $.

\begin{thm} \label{f-conditions}
Suppose that~$ u $ is an integer that is
congruent to~4 modulo~8, and that
every prime number in the factorisation
of~$ u(u^2-1)/4 $ has odd exponent.
For~$ D $ and~$ D' $ in~$ \Q^\times $, let~$ g + h $ in~$ H^1(\Q, E_u[2](-1)) $
be the element defined above, and 
let~$ [g+h] $ denote its image in~$ H^1(\Q, E_u[2^\infty](-1) ) $
under extension of the coefficients.
Then~$ [g+h] $ is in~$ \res_p^{-1}(H_f^1(\Q_p, E_u[2^\infty](-1))) $ if and
only if the following hold.
\begin{enumerate}
\item
$ D' > 0 $ for~$ p = \infty $.

\item
$ \ord_p(D) $ and $ \ord_p(D') $ are even, 
if~$ p $ does not divide~$ u (u^2-1) $.

\item
$ D' $ is a square in~$ \Q_p $,
if~$ p $ divides~$ u^2-1 $.

\item
If~$ p $ divides~$ u/4 $ then~$ \ord_p(D) $ is even,~$ \ord_p(D') $ is
even
if~$ p \equiv 3 $ modulo~4, and~$ 2^{ord_p(D')} D $ is a square in~$ \Q_p $ if~$ p \equiv 1 $
modulo~4.

\item
$ D $ or~$ -D $ is a square in~$ \Q_2 $, as well as~$ D' $
 or~$ -3 D' $, if~$  p = 2 $.
\end{enumerate}
\end{thm}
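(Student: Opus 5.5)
We treat each place $p$ separately. For finite $p\ne 2$ we use the row and column conditions attached to the diagram~\eqref{infCD}; for $p=\infty$ and $p=2$ we use the definitions directly. Throughout, $g+h$ corresponds to the Kummer classes of $(D,D')$. We write $M=E_u[2^\infty](-1)$ and $A=V/T=M$.

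\emph{The place $p=\infty$.} Here $H^1_f(\R,V)=H^1(\R,V)=0$, so the condition is that $[g+h]$ restricts to zero in $H^1(\R,M)$. By the argument of Lemma~\ref{2kernel} over $\R$, the kernel of $H^1(\R,E_u[2](-1))\to H^1(\R,M)$ is the image of $H^0(\R,M)$ under the connecting map. We compute this image from the description of $E_u[4]$ in Proposition~\ref{torsion}. Complex conjugation acts with the twist by $-1$, the first coordinate $-u^2\pm u\sqrt{u^2-1}$ of $Q$ is real, and $(u,u(u+1))$ is fixed up to sign. The upshot should be that the image is exactly $\langle (0,0)\rangle(-1)$. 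Hence $g_D$ always dies, while $h_{D'}$ must be trivial, i.e.\ $D'>0$.

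\emph{Odd $p\nmid u(u^2-1)$.} The reduction is good, so $(V/T)^{I_p}=V^{I_p}/T^{I_p}$ and only the row condition remains: the class must be unramified. Since $M$ is unramified at $p$, the class is unramified exactly when $g+h$ is unramified in $H^1(I_p,E_u[2](-1))$ modulo the image of $M^{I_p}$, which is $2$-divisible. This forces $\ord_p(D)$ and $\ord_p(D')$ to be even.

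\emph{Odd $p\mid u(u^2-1)$.} The reduction is multiplicative, and $\ord_p$ of the Tate parameter $Q$ is $2\ord_p(u^2-1)$ or $4\ord_p(u)$, with these exponents odd. Using the Tate uniformisation we write $M^{I_p}$ as an extension of $\langle Q^{1/2}\rangle/Q^{\Z}$ (respectively $\langle Q^{1/4}\rangle/Q^{\Z}$) by $\mu_{2^\infty}$, twisted as in the proof of Proposition~\ref{tams}. The row condition says that the restriction to $I_p$ lies in the image of the connecting map from $M^{I_p}$, and we compute this explicitly using which $2$-torsion points lie in the identity component. The column condition then asks that the resulting unramified class be killed in $H^1(\F_p,\text{component group})$. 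The component group is cyclic of order $2\ord_p$ or $4\ord_p$, and on it Frobenius acts through $\pm1$ according to $p$ mod $4$. We must match $(0,0)$ and $(-1,0)$ to the points reducing to the node or to the identity component: for $p\mid u^2-1$ it is $(-1,0)$ that hits the singular point, and for $p\mid u$ it is $(0,0)$. Doing so, we expect the conditions in (3) and (4) to emerge, including the twisted Kummer element $2^{\ord_p(D')}D$ when $p\equiv1$ mod $4$. The factor $2$ should come from the point $(u,u(u+1))$ of order $4$ doubling to $(0,0)$.

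\emph{The place $p=2$.} The representation is ordinary and $H^1_f(\Q_2,V)$ equals the image of $H^1(\Q_2,\text{sub})$, where the sub is the line with character $\epsilon_2^{-1}\lambda_c$ from the proof of Proposition~\ref{2tam2left}. Since $t_{V^*(1)}=\mathbf D(V)$ and $H^1_f$ has dimension $\dim t_V+\dim H^0=0$, we get $H^1_f(\Q_2,V)=0$. So the condition is again that $[g+h]$ restricts to zero, i.e.\ that $g+h$ lies in the connecting image of $H^0(\Q_2,M)$, which has order $8$ by Proposition~\ref{2tam2left}. The image of $E_u[2]$ has order $4$ and is spanned by the classes of $-1$ (from the point $Q$ over $\Q_2(i)$) and $-3$ (from the point $R$, through $\Q_2(\sqrt{-3})$ or the unit root $c\equiv5$ mod $8$), placed in the appropriate coordinates. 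This yields condition (5).

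\emph{Expected difficulty.} The main obstacle will be the multiplicative case $p\mid u/4$ with $p\equiv1$ mod $4$. There the component group has order divisible by $4$, and the column condition mixes $D$ and $D'$, so the Tate-curve bookkeeping of which $4$-torsion points lie in which component has to be done carefully.
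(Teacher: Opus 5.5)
The genuine gap is in parts (3) and (4). You set up the row and column conditions and the Tate-curve description of $(V/T)^{I_p}$ correctly. But you then only assert that the conditions ``should emerge'', and the one non-formal step is missing.

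When the row condition holds but $g+h$ is itself ramified as an $E_u[2](-1)$-valued cocycle, you must first replace it by an \emph{unramified} cocycle representing the same class in $H^1(\Q_p,E_u[2^\infty](-1))$. Only then can the column condition be tested.

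\begin{itemize}
\item In case (4) with $\ord_p(D')$ odd, the paper subtracts the coboundary $h'(\sigma)=t\bigl(\sigma(\widetilde Q^{1/2})^{\epsilon_2(\sigma)^{-1}}\widetilde Q^{-1/2}\bigr)$ of the point $t(\widetilde Q^{1/2})$ of order $8$, where $\widetilde Q^4=Q$. This coboundary agrees with $h$ on $I_p$.
\item Evaluate $g+h-h'$ at a Frobenius lift in $\langle\widetilde Q\rangle/\langle Q\rangle(-1)\simeq\Z/4\Z$. Because of the Tate twist, $-h'$ contributes the class of $\widetilde Q^{(1-p)/2}$, and this is the source of the dependence on $p \bmod 8$.
\item For $p\equiv 3\bmod 4$ this is an odd power of $\widetilde Q$. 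So the column condition fails, and $\ord_p(D')$ must be even.
\item For $p\equiv1\bmod4$ it is trivial or equal to the class $\widetilde Q^2$ of $(0,0)$, according as $p\equiv1$ or $5\bmod 8$, i.e.\ according as $2$ is a square in $\Q_p$. This is what converts ``$D$ is a square'' into ``$2D$ is a square''.
\end{itemize}

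Your suggested origin of the factor $2$, the point $(u,u(u+1))$ doubling to $(0,0)$, cannot give this. Under the twist its Galois action factors through $\Gal(\Q(i)/\Q)$. So for $p\equiv1\bmod4$ it is $G_{\Q_p}$-invariant and only yields the relation $D\sim -D$ of Lemma~\ref{2kernel}, which is vacuous over $\Q_p$.

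You also still need two further steps. In case (3) with $\ord_p(D)$ odd, you need the analogous correction; the paper uses Lemma~\ref{2kernel} to replace $D$ by $(u^2-1)D$. In case (4) with $p\equiv3\bmod4$ and $\ord_p(D')$ even, you need the observation that the connecting map $H^0(\F_p,\langle\widetilde Q^2\rangle/\langle Q\rangle(-1))\to H^1(\F_p,\langle\widetilde Q^2\rangle/\langle Q\rangle(-1))$ is an isomorphism, so no condition on $D$ arises there.

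Your arguments at $p=\infty$ and at odd primes of good reduction are those of the paper. At $p=2$ you take a different route.
\begin{itemize}
\item The paper shows $H^1_f(\Q_2,V)=\ker(H^1(\Q_2,V)\to H^1(I_2,V))$, via $H^1_f=H^1_e$ for $V(2)$, duality and Flach's lemma, and then reuses the row/column argument.
\item You instead use $H^1_f(\Q_2,V)=0$ and compute the connecting image of $H^0(\Q_2,E_u[2^\infty](-1))/2$ directly. This is the alternative the paper mentions after Corollary~\ref{2torscor}; it is more computational but works.
\end{itemize}

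Two corrections are needed, though. First, the invariant line of $V$ has character $\lambda_c^{-1}$, not $\epsilon_2^{-1}\lambda_c$. Moreover, the image of $H^1$ of either line in $H^1(\Q_2,V)$ is one-dimensional, so your first description of $H^1_f(\Q_2,V)$ is wrong; only your dimension count is right.

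Second, the connecting images of the points of $E_u[2]$ span only a group of order $2$, generated by $(D,D')=(-1,1)$, since $(-1,0)=2R$ with $R$ invariant. The second generator is the image of $R$ itself. To compute it, take $R$ in the kernel $K$ of reduction and halve it inside $K(-1)$. On $K(-1)$, $G_{\Q_2}$ acts through $\lambda_c^{-1}$ with $c^{-1}\equiv5\bmod8$, which gives $(1,-3)$ and hence (5).
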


\begin{proof}
As a general strategy in this proof, we shall often use that~$ \res_p([g+h] )$ in~$ H^1(\Q_p, E_u[2^\infty](-1) ) $
comes from the image of~$ g+h $ in~$ H^1(\Q_p, E_u[2](-1) ) $, so that
we can compute when~$ \res_p([g+h]) $ is trivial by comparing
that image with the image of~$ H^0(\Q_p, E_u[2](-1) ) / 2 $ under the
connecting homomorphism in the long exact sequence of~$ G_{\Q_p} $-cohomology
associated to~\eqref{sos2mult}.

(1) With all points in~$ E_u[2] $ rational,
the Weierstrass parametrisation of~$ E_u $ shows that its torsion
under the action of~$  I_\infty = G_{\R} = \Gal(\C/\R) = \langle \sigma \rangle $
is isomorphic to~$ (\Q/\Z)^+ \oplus (\Q/\Z)^- $, where~$ \sigma $
acts trivially on the first copy of~$ \Q/\Z $ and by multiplication by~$ -1 $ on
the second (cf.\ the proof of Proposition~\ref{taminf}). The first copy contains the point~$P = (u, u(u+1))$,
hence also $2P=(0,0)$. Then~$ (E_u[2^\infty](-1))^{G_\R} = \langle 2 P \rangle(-1) \oplus (\Q_2/\Z_2)^-(-1)$.
From the long exact sequence of~$ G_{\R} $-cohomology associated
to~\eqref{sos2mult} we see that the kernel of~$ H^1(\R, E_u[2](-1)) \to H^1(\R, E_u[2^\infty](-1)) $
is generated by the element that maps~$ \sigma $
to~$ \epsilon_2(\sigma) \sigma(P) - P = -2 P = (0,0) $ in~$ E_u[2](-1) $,
where~$ \epsilon_2 $ is again the 2-adic cyclotomic character.
Therefore~$ \res_\infty([g]) $ is always trivial in~$ H^1(\R, E_u[2^\infty](-1)) $,
but for~$ \res_\infty([h]) $ to be trivial we need that~$ h $ gives the trivial
element in~$ H^1(\R, \langle (-1,0) \rangle (-1) ) $, i.e.,
that~$ D' > 0 $. 
If that is the case, then~$ \res_\infty([g+h]) $ is trivial,
hence is in~$ H_f^1(\R, E_u[2^\infty] (-1) ) $.
So~$ D' > 0 $ is necessary and sufficient for~$ \res_\infty([g+h]) $
to be in~$ H_f^1(\R, E_u[2^\infty](-1) ) $.

(2)
Here~$ p $ is an odd prime number where~$ E_u $ has good reduction.
Then~$ I_p $ acts trivially on~$ E_u[2^\infty](-1) $, so that
from the long exact sequence of~$ I_p $-cohomology 
associated to~\eqref{sos2mult},
we see that~$ H^1(I_p, E_u[2](-1)) $ injects into~$ H^1(I_p, E_u[2^\infty](-1) ) $.
Therefore~$ \res_p([g+h]) $ restricts to the trivial element
in~$ H^1(I_p, E_u[2^\infty](-1) ) $ if and only
if both~$ g $ and~$ h $ restrict to the trivial homomorphism
on~$ I_p $, i.e., $ \ord_p(D) $ and~$ \ord_p(D') $ are even.
With the row condition satisfied, the column condition is trivial
because~$ I_p $ acts trivially on~$ V $ and~$ T $.
So the condition that we found is necessary and sufficient for~$ \res_p([g+h]) $
to be in~$ H_f^1(\Q_p, E_u[2^\infty](-1) ) $.

(3) By Proposition~\ref{minimal}(2) the curve has split multiplicative reduction
if~$ p \equiv 1 $ modulo~4, and non-split multiplicative reduction
if~$ p \equiv 3 $ modulo~4.
We consider the parametrisation $t:\Qbar_{p}^{\times}/Q^{\Z}\simeq E_u(\Qbar_{p})$
obtained from the Tate curve,
where~$ Q $ in~$ \Q_p^\times $ satisfies~$ |Q|_p < 1 $, and
the natural action of~$ G_{\Q_p} $ on the left-hand side is twisted by the
non-trivial quadratic character of~$ \Gal(\Q_p(i)/\Q_p) $ if~$ p \equiv 3 $
modulo~4 (see the proof of \cite[Corollary~V.5.4]{S2}). Because $\Q_p(i)/\Q_p$ is unramified, this
quadratic twist is trivial on~$ I_p $.
Since~$ E_u $ has~$ j $-invariant
\begin{equation*}
2^8 (1-u^2+u^4)^3 u^{-4} (1-u^2)^{-2} = \frac{1 + 744 Q + 196884 Q^2 + \dots}{Q}
\,,
\end{equation*}
we find
\begin{equation*}
Q = \frac{1}{2^8} u^4 (1-u^2)^2 \frac{1 + 744 Q + 196884 Q^2 + \dots}{(1-u^2+u^4)^3}
\,.
\end{equation*}
We have~$ |u^2-1|_p < 1 $ because~$ p $ divides~$ u^2-1 $, and with~$ 1 + p \Z_p $ admitting
unique square roots, we see that~$ Q = \widetilde Q^2 $ for~$ \widetilde Q = w (1-u^2)/16 $ with~$ w $ in~$ 1 + p \Z_p $.
Since~$ \ord_p(u^2-1) $ is odd, it follows that~$ (E_u[2^\infty](-1))^{I_p} =t( \mu_{2^\infty} \cdot \langle \widetilde Q \rangle) (-1) $.

The short exact sequence~\eqref{sos2mult} over~$ \Qbar_p $
gives rise to a long exact sequence of~$ I_p $-cohomology,
and the kernel of~$ H^1(I_p, E_u[2](-1) ) \to H^1(I_p, E_u[2^\infty](-1) ) $
is generated by the image of~$ (E_u[2^\infty](-1))^{I_p} / 2 \simeq \Z/2\Z $ under the
connecting homomorphism. 
The non-trivial class in~$ (E_u[2^\infty](-1))^{I_p} / 2 $ is that of~$t( \widetilde Q) $, and it gives the element of~$ H^1(I_p, E_u[2](-1) ) $
that maps~$ \sigma $ in~$ I_p $ to $ t(\sigma(\widetilde Q^{1/2}) \widetilde Q^{-1/2} ) $,
which is~$ t(1) $ if~$ \sigma $ is in~$ G_{\Q_p^\unr(\sqrt p)} \subseteq I_p $,
and~$ t(-1) $ if~$ \sigma $ is not in~$ G_{\Q_p^\unr(\sqrt p)} $.
Since $\pm 1$ are units in $\Z_p$, under~$ t $ they map to points of~$ E_u[2] $
with nonsingular reduction modulo~$p$ \cite[p.432]{S2}, i.e., to~$O$ and $(0,0)$ (cf.~the proof of Proposition~\ref{minimal}(2)).
Because~$ \Q_p(\sqrt D) \subseteq \Q_p^\unr(\sqrt p) $, we conclude
that~$ g $ does not contribute to the image
of~$ \res_p([g+h]) $ in~$ H^1(I_p, E_u[2^\infty](-1)) $. So
this image is trivial if and only if~$ \Q_p(\sqrt{D'}) \subseteq \Q_p^\unr $,
i.e., if and only  if~$ \ord_p(D') $ is even.

Assume this to be the case, so the row condition is fulfilled.
Using Lemma~\ref{2kernel} to replace~$ D $ with~$ (u^2-1) D $
if necessary, we may assume that~$ \ord_p(D) $ is also even,
so that~$ \sqrt{D} $ and~$ \sqrt{D'} $ are in~$ \Q_p^\unr $.
Then~$ \res_p([g+h]) $ in~$ H^1(\F_p , (E_u[2^\infty](-1))^{I_p} ) $ is 
obtained from the element in~$ H^1(\F_p , (E_u[2](-1))^{I_p} ) $
induced by~$ g+h $.
(Recall that we identify $ G_{\F_p} $ with~$ \Gal(\Q_p^\unr / \Q_p) $.)
The subgroup~$ t( \mu_{2^\infty}) (-1) $
in our earlier description of~$ (E_u[2^\infty](-1))^{I_p}  $ as~$t( \mu_{2^\infty} \cdot \langle \widetilde Q \rangle) (-1) $
corresponds to~$ V^{I_p} / T^{I_p} $, and because~$ (0,0) = t(-1) $, $ g $ always satisfies the column condition.
Because the inclusion of~$ \langle (-1,0) \rangle (-1) $
into~$ ( E_u[2^\infty](-1) )^{I_p} $ results in an isomorphism of that subgroup
with the cokernel of~$ V^{I_p} / T^{I_p} \to (V/T)^{I_p} $,
the column condition is that~$ h $ gives the
trivial element in~$ H^1(\F_p , \langle (-1,0) \rangle (-1) ) $,
i.e., that~$ D' $ is a square in~$ \Q_p $.
Therefore~$ \res_p([g+h]) $ is in~$ H_f^1(\Q_p, E_u[2^\infty](-1) ) $
if and only if~$ D' $ is a square in~$ \Q_p $.

(4)
The curve has split multiplicative reduction at~$ p $ by
Proposition~\ref{minimal}(2).
Using again the Tate curve, we see from the~$ j $-invariant
that here~$ Q = \widetilde Q^4 $ for~$ \widetilde Q = w u/4 $ with~$ w $ in~$ 1 + p \Z_p $.
Since~$ \ord_p(u) $ is odd, we have that~$ (E_u[2^\infty](-1))^{I_p}=t ( \mu_{2^\infty} \cdot \langle \widetilde Q \rangle) (-1) $ on the Tate curve.
The kernel of~$ H^1(I_p, E_u[2](-1) ) \to H^1(I_p, E_u[2^\infty](-1) ) $
in the long exact sequence of~$ I_p $-cohomology associated to~\eqref{sos2mult}
is generated by the image under the connecting homomorphism of
the class of~$t( \widetilde Q) $ (with Tate twist) in~$ (E_u[2^\infty](-1))^{I_p} / 2 $.
This maps~$ \sigma $ in~$ I_p $ to $t( \sigma( \widetilde Q ^{1/2}) \widetilde Q^{-1/2}) = t(\pm 1)$.
(Again we do not write the Tate twist at the level of elements, because~$ I_p $ acts trivially through the Tate twist.)
This time the points in~$ E_u[2](-1) $ of nonsingular reduction are~$ O $ and~$ (-1,0) $.
Since~$ \ord_p(u) $ is odd, we have~$ \Q_p^\unr( \widetilde Q^{1/2} ) = \Q_p^\unr(\sqrt{p}) $,
so that~$ \res_p([h]) $ always becomes trivial in~$ H^1(I_p, E_u[2^\infty](-1) ) $.
Therefore~$ \res_p([g+h]) $ has trivial image in~$ H^1(I_p, E_u[2^\infty](-1)) $
if and only if~$ D $ is a square in~$ \Q_p^\unr $, i.e., if~$ \ord_p(D) $ is
even.

Assuming this to be the case, the row condition is
fulfilled, and~$ \res_p([g+h]) $ comes from an element of~$ H^1(\F_p , (E_u[2^\infty](-1))^{I_p} ) $.
For the column condition, we then need this element to become trivial when we quotient out the coefficient
group~$ (E_u[2^\infty](-1))^{I_p}=t(\mu_{2^\infty} \cdot \langle \widetilde Q \rangle) (-1) $ by its subgroup~$ V^{I_p} / T^{I_p}=t(\mu_{2^\infty})(-1) $.
The resulting quotient is isomorphic to~$ \langle \widetilde Q \rangle / \langle Q \rangle (-1) $. So the column condition is equivalent to~$ [g+h] $ giving rise to the
trivial element of~$ H^1(\F_p , \langle \widetilde Q \rangle / \langle Q \rangle (-1) ) $ in this way.
Below we make this explicit using the 
long exact sequence of~$ G_{\F_p} $-cohomology associated to the short exact
sequence
\begin{equation} \label{Qsos}
 0 \to \langle \widetilde Q^2 \rangle / \langle Q \rangle (-1) \to \langle \widetilde Q \rangle / \langle Q \rangle (-1) \xrightarrow{2} \langle \widetilde Q^2 \rangle / \langle Q \rangle (-1) \to 0.
\end{equation}
Note that, under the projection from $ (E_u[2^\infty](-1))^{I_p}$ to $t(\langle \widetilde Q \rangle / \langle Q \rangle)(-1)$, the points $(-1,0)$ and $(0,0)$ map to the trivial element and the class of $\widetilde Q^2$, respectively.

First we assume that~$ \ord_p(D') $ is also even, so that~$ \sqrt{D} $ and~$ \sqrt{D'} $
are in~$ \Q_p^\unr $, hence~$ g $ and~$ h $ are
trivial on~$ I_p $, and~$ g+h $ defines an element of~$ H^1(\F_p, E_u[2](-1) ) $.
From the long exact sequence of~$ G_{\F_p} $-cohomology associated
to~\eqref{Qsos}, and considering how~$ G_{\F_p} $ acts on~$ \mu_4 $ in the twist inside $H^0(\F_p , \langle \widetilde Q \rangle / \langle Q \rangle (-1) ) $,
we find that~$ H^1(\F_p , \langle \widetilde Q^2 \rangle / \langle Q \rangle (-1) ) \to H^1(\F_p , \langle \widetilde Q \rangle / \langle Q \rangle (-1) ) $
is injective if~$ p \equiv 1$ modulo~4. Recalling what was just said about the images of~$ (-1,0) $
and~$ (0,0) $  in~$ t( \langle \widetilde Q \rangle / \langle Q \rangle)(-1) $,
we see that for such~$ p $, the
column condition is equivalent to the map induced by~$ g $
on~$ G_{\F_p} $ being trivial, i.e., to~$ D $
being a square in~$ \Q_p $. But for~$ p \equiv 3$ modulo~4
the column condition is always satisfied  because the connecting
homomorphism~$ H^0(\F_p , \langle \widetilde Q^2 \rangle / \langle Q \rangle (-1) ) \to H^1(\F_p ,  \langle \widetilde Q^2 \rangle / \langle Q \rangle (-1) ) $
is an isomorphism.
(Alternatively, here~$ u^2-1 $ is not a square in~$ \Q_p $,
and by Lemma~\ref{2kernel} we can replace~$ D $ with~$ (u^2-1) D $
if necessary and assume~$ D $ is a square in~$ \Q_p $.)

Now assume that~$ \ord_p(D') $ is odd. Then~$ h $ is non-trivial on~$ I_p $,
and we replace it with~$ h - h' $, where~$ h' $ maps~$ \sigma $ in~$ G_{\Q_p} $ to~$ t( \sigma(\widetilde Q^{1/2})^{\epsilon_2(\sigma)^{-1}} \widetilde Q^{-1/2} ) $
in~$ E_u[4](-1) $, and we wrote the Tate twist explicitly. We have~$ [g+h] = [g+h-h'] $ in~$ H^1(\Q_p, E_u[2^\infty](-1) ) $, since $h'$ is a coboundary.
Because~$ \epsilon_2 $ is trivial on~$ I_p$, 
$ D' Q ^{-1} $ is a square in~$ \Q_p^\unr $, and~$  t(-1) = (-1,0) $,
$ h $ and~$ h' $ have the same restriction to~$ I_p $, hence
$ g+h-h' $ is trivial on~$ I_p $, 
and gives an element of~$ H^1(\F_p, ( E_u[4](-1) )^{I_p} ) $.

Extending the coefficients to~$ (E_u[2^\infty](-1))^{I_p} = (V/T)^{I_p} $,
quotienting out these by~$ V^{I_p} / T^{I_p} $, and identifying the quotient
with the middle term in~\eqref{Qsos} as before, we have to consider
the cocycle~$c: G_{\F_p} \to \langle \widetilde Q \rangle / \langle Q \rangle (-1) $ obtained from~$ g+h-h' $.
We shall do this by explicitly identifying~$ G_{\F_p} $ with~$ G_p/I_p = \Gal(\Q_p^\unr / \Q_p) $.
If~$ \tilde\sigma $ is a lift to~$ \Gal(\Q_p^\unr / \Q_p) $ of the Frobenius in~$ G_{\F_p} $,
then~$c(\tilde \sigma) $ is equal to the class of the product
\begin{equation*}
\begin{Bmatrix*}[l]
1, & \text{ if $ D $ is a square in~$ \Q_p $}
\\
\widetilde Q^2 , & \text{ if $ D $ is not a square in~$ \Q_p $}
\end{Bmatrix*}
\cdot
\begin{Bmatrix*}[l]
1 , & \text{ if $ p \equiv 1 $ modulo~8}
\\
\widetilde Q^{-1} , & \text{ if $ p \equiv 3 $ modulo~8}
\\
\widetilde Q^{-2} , & \text{ if $ p \equiv 5 $ modulo~8}
\\
\widetilde Q^{-3} , & \text{ if $ p \equiv 7 $ modulo~8}
\end{Bmatrix*}
.
\end{equation*}
The first term is the contribution of~$ g $, the second
that of~$ h - h' $. For the latter, note that the values of~$ h $ in the Tate curve
are in~$ \langle -1 \rangle (-1) $, which is in the kernel of our
map to~$ \langle \widetilde Q \rangle / \langle Q \rangle (-1) $, and
the contribution of~$ -h' $ is the image of
\begin{equation*}
(\widetilde Q^{1/2})^{-\epsilon_2(\tilde\sigma)^{-1}} \widetilde Q^{1/2} 
=
\widetilde Q^{(1-p)/2}
\end{equation*}
because the sign in~$ \tilde \sigma(\widetilde Q ^{1/2}) = \pm \widetilde Q ^{1/2} $
similarly does not matter, $ \widetilde Q^{1/2} $ is 8-torsion
in the Tate curve, and~$ p^{-1} \equiv p $ modulo~8.

We now consider the long exact sequence of~$ G_{\F_p} $-cohomology
obtained from~\eqref{Qsos}.
The image of $c$ under the map~$ H^1(\F_p, \langle \widetilde Q \rangle / \langle Q \rangle (-1) ) \xrightarrow{2} H^1(\F_p, \langle \widetilde Q^2 \rangle / \langle Q \rangle (-1) ) $,
is non-zero if~$ p \equiv 3 $ or~7 modulo~8, i.e., we do not
obtain the trivial homomorphism~$ G_{\F_p} \to \langle \widetilde Q^2 \rangle / \langle Q \rangle (-1) ) $;
hence the column condition for such~$ p $  is not satisfied with~$ \ord_p(D') $ odd.
On the other hand, for~$ p \equiv 1 $ or~5 modulo~8, taking~$ G_{\F_p} $-invariants
in~\eqref{Qsos} is again exact, so that the map~$ H^1(\F_p, \langle \widetilde Q^2 \rangle / \langle Q \rangle (-1) ) \to H^1(\F_p, \langle \widetilde Q \rangle / \langle Q \rangle (-1) ) $
obtained from~\eqref{Qsos} is injective.  The cocycle~$ c $
takes values in the subgroup~$ \langle \widetilde Q^2 \rangle / \langle Q \rangle (-1) $ of~$ \langle \widetilde Q \rangle / \langle Q \rangle (-1)  $
in this case, so the column condition is equivalent to it giving the trivial
class in~$ H^1(\F_p, \langle \widetilde Q^2 \rangle / \langle Q \rangle (-1) ) $,
which means it defines the trivial homomorphism~$ G_{\F_p} \to \langle \widetilde Q^2 \rangle / \langle Q \rangle (-1) $.
From the above calculation we now see that
this is equivalent to~$ D $ being a square in~$ \Q_p $
if~$ p \equiv 1 $ modulo~8, and a non-square if~$ p \equiv 5 $
modulo~8.
Because~$ 2 $ is a square in~$ \Q_p $ if and only if~$ p \equiv 1 $
or~7 modulo~8, we can formulate this as~$ 2 D $ being a square
in~$ \Q_p $ for~$ p \equiv 1 $ modulo~4, all under the assumption
that~$ \ord_p(D') $ is odd.

Finally, the conditions found depending on the parity of~$ \ord_p(D') $ can
be combined into that given in the proposition.

(5)
For~$ p = 2 $, we first make explicit~$ H_f^1(\Q_2, V) $. At
the beginning of Section~\ref{2Tam2} we saw that~$ \phi $ acting
on~$ \mathbf{D}(V) $, as defined in~\eqref{Dcris-def}, satisfies~$ \phi^2 + \phi + 2 = 0 $. Recall that 
$$\mathbf{D}(V):=D_{\cris}(V):=(B_{\cris}\otimes V)^{G_{\Q_2}}.$$ Around \cite[(1.8)]{BK} an element $t\in B_{\cris}$ is introduced, such that $\phi(t)=2t$ and $G_{\Q_2}$ acts on $t$ via the 2-adic cyclotomic character. (We already met this element in the proof of Proposition~\ref{2tam2}.) It follows that $x\mapsto t^{-2}x$ is a bijection from $\mathbf{D}(V)$ to $\mathbf{D}(V(2))$. Hence the action of $\phi$ on~$ \mathbf{D}(V(2)) $ is such that $(4\phi)^2+(4\phi)+2=0$, i.e.,~$ \phi^2 + \frac14 \phi + \frac18 = 0 $.

According to \cite[Corollary~3.8.4]{BK},$$ H_f^1(\Q_2, V(2))/ H_e^1(\Q_2, V(2))\simeq \mathbf{D}(V(2))/(1-\phi)\mathbf{D}(V(2)),$$
where $H_e^1(\Q_2, V(2))$ is defined like $H_f^1(\Q_2, V)$, but replacing~$ B_{\cris} $ with its
subring~$ B_{\cris}^{\phi=1} $ and~$ V $ with~$ V(2) $.
Since~$1$ is not a root of the above polynomial, it follows that~$ H_f^1(\Q_2, V(2))=H_e^1(\Q_2, V(2))$.
(Note that~$ \phi $ and~$ \mathbf{D}(V(2)) $ are denoted~$ f $ and~$ \textup{Crys}(V(2)) $ respectively 
in loc.\ cit.)

Since~$ V(2)^*(1) = V^*(-1) = V $,  we conclude using Proposition~3.8
of loc.\ cit.\ that~$ H_f^1(\Q_2, V) = H_g^1(\Q_2, V) $, as annihilators, with respect to the local Tate duality pairing, of  $ H_f^1(\Q_2, V(2))$ and $H_e^1(\Q_2, V(2))$, respectively.
Here $H_g^1(\Q_2, V) $ is defined by replacing~$ B_{\cris} $ with the
larger ring~$ B_{\dR} $ in the definition of~$ H_f^1(\Q_2, V) $.
Finally, according to \cite[Lemma~2]{Flach90},~$H_g^1(\Q_2, V) = \ker(H^1(\Q_2, V)\to H^1(I_2, V/F^1V))$, where the filtration of the ordinary representation $V$ is such that $I_2$ acts on the $j^{\mathrm{th}}$ graded piece as $\epsilon_2^j$. Recalling the composition factors of $ V $ found in the proof of Proposition~\ref{2tam2left},
the graded pieces are trivial except for~$j=0$ and~$-1$, so $F^1V=0$, and we conclude that
$$H_f^1(\Q_2, V) = \ker(H^1(\Q_2, V)\to H^1(I_2, V)) .$$
This matches the description for the other primes, and we can use the same approach,
based on~\eqref{infCD}.

If we let~$ K \subseteq E_u[2^\infty] $ be the kernel of the reduction
map at~2, then we have a short exact sequence
$ 0 \to K \to E_u[2^\infty] \to \mathcal{E}_{u,\F_2}[2^\infty] \to 0 $
that matches the composition factors of~$ V(1) $.
Here~$ I_2 $ acts trivially on the last term and through~$ \epsilon_2 $
on the first.
From Proposition~\ref{minimal}(3)
we see that~$ P = (u, u(u+1)) $ and~$ 2 P = (0,0) $ are not in~$ K $,
which implies that
\begin{equation} \label{EIdescription}
(E_u[2^\infty](-1))^{I_2} = K(-1) \oplus \langle (0,0) \rangle (-1) 
.
\end{equation}
From the long exact sequence of~$ I_2 $-cohomology associated to the
short exact sequence~\eqref{sos2mult},
we find that the kernel of~$ H^1(I_2, E_u[2](-1)) \to H^1(I_2, E_u[2^\infty](-1)) $
is generated by the image under the connecting homomorphism of~$ \langle (0,0) \rangle (-1) $,
which maps~$ \sigma $ in~$ I_2 $ to~$ \epsilon_2(\sigma)^{-1} \sigma(P) - P $.
This is trivial if~$ \sigma $ is in~$ G_{\Q_2^\unr(i)} \subseteq I_2 $, the generator of~$ \langle (0,0) \rangle (-1) $ if~$ \sigma $ is
not in~$ G_{\Q_2^\unr(i)} $.
So~$ \res_2([g+h)] $ maps to the trivial element of~$ H^1(I_2, E_u[2^\infty](-1) ) $
if and only if~$ D' $ and one of~$ D $ and~$ -D $ are squares in~$ \Q_2^\unr $.
Assuming this to be the case, using Lemma~\ref{2kernel} we may replace~$ D $ with~$ (u^2-1) D $ if necessary,
and assume~$ D $ and~$ D' $ are squares in~$ \Q_2^\unr $.
Then the row condition is fulfilled,~$ g+h $ is trivial on~$ I_2 $, and gives an element of~$ H^1(\F_2 , E_u[2](-1)) $.

From~\eqref{EIdescription} and the composition factors of~$ V $,
we obtain the short exact
sequence~$ 0 \to V^{I_2}/ T^{I_2} \to (V/T)^{I_2} \to \langle (0,0) \rangle (-1) \to 0 $,
because the first term is~$ K(-1) $ and the second~$ (E_u[2^\infty](-1))^{I_2} $.
So the column condition for~$ [g+h] $ is that
the map induced by~$ g $ on~$ G_{\F_2} $ is trivial, i.e., that~$ D $ is a square in~$ \Q_2 $.

Therefore $ \res_2([g+h]) $ is in $ H_f^1(\Q_2, E_u[2^\infty](-1)) $
if and only if~$ D' $ is a square in~$ \Q_2^\unr $
and~$ D $ or~$ -D $ is a square in~$ \Q_2 $.
The condition on~$ D' $ is equivalent to~$ \Q_2(\sqrt{D'}) \subseteq \Q_2(\sqrt{-3}) $,
the unramified quadratic extension of~$ \Q_2 $, so it can also
be formulated as~$ D' $ or~$ -3 D' $ being a square in~$ \Q_2 $.
\end{proof}

\begin{cor} \label{2torscor}
With notation and assumptions as in Theorem~\ref{f-conditions},
let~$ S $ be the set of prime divisors of~$ u^2-1 $, and~$ S' $
the set of prime divisors of~$ u $ that are congruent to~1 modulo~4.
Then the 2-torsion in~$ H_f^1(\Q, E_u[2^\infty](-1) ) $ is in bijection
with pairs~$ (D, D') $ of positive squarefree integers, where
the prime factors of~$ D $ are in~$ S $ and those of~$ D' $ in~$ S' $,
and which satisfy
\begin{itemize}
\item
$ D' $ is a square modulo~$ p $ for every~$ p $ in~$ S $;

\item
$ 2 ^{\ord_p(D')} D $ is a square modulo~$ p $ for every~$ p $ in~$ S' $;

\item
$ D \equiv 1 $ modulo~8.
\end{itemize}
\end{cor}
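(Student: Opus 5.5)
The plan is to combine Lemma~\ref{2kernel} with Theorem~\ref{f-conditions}. We identify $H^1(\Q, E_u[2](-1))$ with $\Q^\times/2 \times \Q^\times/2$ via $(D,D') \mapsto g_D + h_{D'}$. By Lemma~\ref{2kernel}, the 2-torsion of $H^1(\Q, E_u[2^\infty](-1))$ is the quotient of this group by a subgroup of order~4. The hypothesis of that lemma holds, since every prime in $u^2-1$ occurs with odd exponent.

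The first step is to make this kernel explicit. The cocycles computed in the proof of Lemma~\ref{2kernel} take values in $\langle (0,0)\rangle(-1)$. One is the quadratic character of $\Q(i)$, the other that of $\Q(\sqrt{u^2-1})$. So the kernel consists of the classes $(D,1)$ with $D \in \langle -1, u^2-1\rangle \subseteq \Q^\times/2$. Consequently the 2-torsion of $H_f^1(\Q, E_u[2^\infty](-1))$ is the set of pairs of classes $(D,D')$ satisfying conditions (1)--(5) of Theorem~\ref{f-conditions} for all $p$, modulo $D \sim -D \sim (u^2-1)D$. These conditions are automatically invariant under this equivalence, because the kernel maps to zero. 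Each class in $\Q^\times/2$ has a unique squarefree integer representative.

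Next I translate the conditions one by one for squarefree representatives.
\begin{enumerate}
\item Condition (1) forces $D' > 0$.
\item Condition (2) forces the prime factors of $D$ and $D'$ to lie among $2$ and the divisors of $u(u^2-1)$.
\item Condition (5) requires $\pm D$ and $D'$ or $-3D'$ to be squares in $\Q_2$, hence of even $2$-adic valuation. Squarefreeness then excludes the prime~$2$ from both.
\item Condition (4) gives $p \nmid D$ for $p \mid u/4$, and $p \nmid D'$ if moreover $p \equiv 3 \bmod 4$. For $p \in S'$ it says $2^{\ord_p(D')}D$ is a square in $\Q_p$; as $D$ is then a $p$-unit, this means $2^{\ord_p(D')}D$ is a square modulo~$p$.
\item Condition (3) gives $p \nmid D'$ for $p \in S$, and $D'$ a square modulo~$p$.
\end{enumerate}
Hence the prime factors of $D$ lie in $S$, those of $D'$ in $S'$, and the first two bullets of the corollary are exactly (3) and (4).

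It remains to treat the $2$-adic conditions and the normalisation of $D$. Since all primes of $D'$ are $\equiv 1 \bmod 4$, we have $D' \equiv 1$ or $5 \bmod 8$. These are precisely the classes where $D'$ or $-3D'$ is a square in $\Q_2$, so that part of (5) is automatic. For $D$, (5) says $D \equiv \pm 1 \bmod 8$. Since $u \equiv 4 \bmod 8$, we have $u^2-1 \equiv -1 \bmod 16$ and $u^2-1 > 0$. Among the four representatives of the class of $D$ modulo $\langle -1, u^2-1\rangle$, namely the squarefree parts of $\pm D$ and $\pm(u^2-1)D$, exactly one is positive and $\equiv 1 \bmod 8$. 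Multiplying by $u^2-1$ stays within integers supported on $S$, because the squarefree part of $(u^2-1)D$ is $(u^2-1)D$ divided by a square.

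This normalisation yields the claimed bijection with pairs $(D,D')$, and it is the step needing most care. Two points must be checked. First, the choice of representative must not conflict with the other local conditions; this follows from the invariance noted above, or can be seen directly since $u^2-1$ is a square modulo $p \in S'$. Second, distinct normalised pairs must give distinct classes, which holds because the kernel only moves $D$ within $\{\pm D, \pm(u^2-1)D\}$.
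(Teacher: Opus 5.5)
Your proposal is correct and follows the paper's proof. You combine Lemma~\ref{2kernel} (kernel generated by the classes of $(-1,1)$ and $(u^2-1,1)$) with the local conditions of Theorem~\ref{f-conditions} read off on squarefree representatives, then normalise $D$ to be positive and $\equiv 1$ modulo~8 using $u^2-1\equiv -1$ modulo~8; you are only somewhat more explicit than the paper about why this normal form is unique and compatible with the remaining conditions.
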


\begin{proof}
The 2-torsion in~$ H_f^1(\Q, E_u[2^\infty](-1) ) $ is the 2-torsion
of~$ H^1(\Q, E_u[2^\infty](-1) ) $ such that its image under~$ \res_p $
lies in~$ H_f^1(\Q_p, E_u[2^\infty](-1) ) $ for all~$ p $, including~$ p = \infty $.
By Lemma~\ref{2kernel}, the 2-torsion of~$ H^1(\Q, E_u[2^\infty](-1) ) $
is the image of~$ H^1(\Q, E_u[2](-1) ) $ under extension of the
coefficients to~$ E_u[2^\infty](-1) $.
We described~$ H^1(\Q, E_u[2](-1) ) $ using elements of~$ \Q^\times/2 \times \Q^\times/2 $,
and imposing the relations coming from Lemma~\ref{2kernel} then
describes its image. Representing the classes in~$ \Q^\times/2 \times \Q^\times/2$
by a pair~$ (D, D') $ of squarefree integers, we see from parts~(2), (4) and~(5) of Theorem~\ref{f-conditions}
that the only prime divisors of~$ D $ are those in~$ S $,
and using parts~(2), (3),(4) and~(5) that those of~$ D' $ are in~$ S' $.
Imposing the condition in part~(5) that~$ D $ or~$ -D $ is a square in~$ \Q_2 $,
so that~$ D \equiv \pm 1 $ modulo~8,
we can use Lemma~\ref{2kernel} to modify~$ D $ using~$ -1 $ and~$ u^2-1 $,
and normalise the pair~$ (D, D') $ uniquely
such that~$ D > 0 $ and~$ D \equiv 1 $ modulo~8 because~$ u^2 -1 \equiv -1 $
modulo~8. Then the only condition left on~$ D $ is that~$ 2^{\ord_p(D')} D $
is a square in~$ \Q_p $ for~$ p $ in~$ S' $, as in Theorem~\ref{f-conditions}(4).
Part~(1) of the proposition requires that~$ D' > 0 $, and the condition in~(5) on~$ D' $
is fulfilled because its prime factors are~1 or~5 modulo~8.
So the only remaining condition is that in part~(3) of the proposition,
but because~$ D' $ has no factors in~$ S $, and all primes in~$ S $
are odd, this simplifies to~$ D' $ being a square modulo each
prime~$ p $ in~$ S $.
\end{proof}

\begin{remar} \label{selmertrivial}
There does not seem to be a result in the literature that implies~$ H_f^1(\Q, E_u[2^\infty](-1)) $
is finite. But it is, of course, trivial if and only
if its 2-torsion is trivial, which we can sometimes verify in
examples (see Remark~\ref{2torsrem} and Section~\ref{num-sec}
for examples).
\end{remar}

\begin{remar} \label{2torsrem}
If~$ s = \#S $ and~$ s' = \#S' $, and we identify~$ D $ and~$ D' $
with their classes in~$ \Q^\times/2 $, then the corollary imposes~$ s + s' + 2 $
linear conditions on an~$ \F_2 $-vector of dimension~$ s + s' $.
Thus one might
expect the 2-torsion in~$ H_f^1(\Q, E_u[2^\infty](-1)) $, and
hence the group itself, to be trivial in general.
This applies, for example, for~$ u=4 $, where~$ S = \{3, 5\} $ and~$ S' = \emptyset $,
and the corollary gives us only~$ (D, D') = (1,1) $.
But the situation is not symmetric in~$ D $ and~$ D' $. If we
take~$ D'=1 $ then we impose~$ s' + 2 $ conditions on an~$ \F_2 $-vector
space of dimension~$ s $. Because~$ u^2-1 = (u-1)(u+1) $ already,
in practice one often has~$ s > s'+2 $, resulting in non-trivial
2-torsion.

But we find not only pairs~$ (D, D')  $ with~$ D' = 1 $. For example, for~$ u = 292 $ we have~$ S = \{3, 97, 293\} $ and~$ S' = \{73\} $,
which results in~$ (D,D') = (1,1) $, $ (97,1) $, $ (1, 73) $ and~$ (97, 73) $.
For~$ u = 1020 $ we have~$ S = \{1019, 1021\} $ and~$ S' = \{5, 17\} $, which
gives~$ (D, D') = (1,1) $ or~$ (1, 17) $.
For~$ u = 1060 $ we have~$ S = \{3, 353, 1061 \} $ and~$ S' = \{5, 53\} $,
and find that~$ (D,D') = (1,1) $ or~$ (353, 5 \cdot 53) $.

For more examples of the order of the 2-torsion in~$ H_f^1(\Q, E_u[2^\infty](-1) ) $,
we refer to Section~\ref{num-sec}, and especially to the tables there.
\end{remar}

\begin{remar}
In the proof of Proposition~\ref{2tam2left} we saw~$ H_f^1(\Q_2, V) = 0 $,
so the condition in Theorem~\ref{f-conditions}(5) can also be
formulated as~$ [g+h] $ having trivial image in~$ H^1(\Q_2, E_u[2^\infty](-1)) $.
This can, in fact, be used directly by considering the image
of~$ H^0(\Q_2 ,  E_u[2^\infty](-1) ) / 2 $
under the connecting homomorphism in the long exact sequence
of~$ G_{\Q_2} $-cohomology. But the proof we used
for that part of the proposition
is far less computational, and bears a greater similarity to the
proofs of the other parts of the proposition.
\end{remar}

\section{\texorpdfstring{$K_2$}{K2} and the regulator}\label{K2reg}

Motivic cohomology, Deligne cohomology, and regulator maps from one to the other, may be defined in a more general setting than what we saw in Section~\ref{regBeil}. In particular, for $X/\Q$ a smooth, quasi-projective variety, there exist maps
$$\mathrm{reg}:H^{\cdot}_{\MMM}(X, \Q(*))\rightarrow H^{\cdot}_{\DDD}(X_{\R}, \R(*)),$$
which are functorial and respect products \cite[\S 2.6]{DS}. We have $H^1_{\MMM}(X, \Q(1))\simeq \OOO^*(X)\otimes\Q$, where $\OOO^*(X)$ is the group of units of $X$, i.e., invertible global sections of the structure sheaf of $X$. 

Let $\AAA^{\cdot}$ be the de Rham complex of real-valued $C^{\infty}$-forms on $X(\C)$, and let $\pi_k:\C\rightarrow (2\pi i)^k\R$ be the projection taking the real or imaginary part according as $k$ is even or odd, respectively. Then by \cite[(2.5.1)]{DS}, $H^p_{\DDD}(X_{\R}, \R(p))$ can be described as
$$\frac{\left\{\phi\in H^0(X(\C), \AAA^{p-1}\otimes(2\pi i)^{p-1}\R)\mid d\phi=\pi_{p-1}(\omega),\,\omega\in H^0(\overline{X}(\C),\Omega^p_{\overline{X}}\langle D\rangle)\right\}}{dH^0(X(\C), \AAA^{p-2}\otimes(2\pi i)^{p-1}\R)},$$
where $\overline{X}$ is a nonsingular projective variety in which $X$ is an open subvariety, with $D:=\overline{X}-X$ a divisor with normal crossings.

In Section~\ref{regBeil}, our choice of rational and integral structures on $H^{m+1}_{\DDD}(X_{\R}, \R(r))$ was guided by the isomorphism $$H^{m+1}_{\DDD}(X_{\R}, \R(r))\simeq \frac{H^m_{\dR}(X_{\R})/\Fil^r H^m_{\dR}(X_{\R})}{H^m_B(X(\C), \R(2\pi i)^r)^+}.$$
The equivalence of this with the alternative description
\begin{equation*}
 H^{m+1}_{\DDD}(X_{\R}, \R(r))\simeq \frac{H^m_B(X(\C), \R(2\pi i)^{r-1})^+}{\Fil^r H^m_{\dR}(X_{\R})}
\end{equation*}
is an elementary consequence of 
$$H^m_{\dR}(X_{\R})\simeq H^m_B(X(\C), \R)^+\oplus iH^m_B(X(\C), \R)^-.$$
The second description gives another approach to rational and integral structures.
We explore the relation between these integral structures in the case $H^2_{\DDD}(E_{u,\R}, \R(2))$ of direct interest to us.

\begin{lem}\label{perioddet}
Assume that~$ u $ is an integer congruent to~4 modulo~8. Then under the period isomorphism
$${\det}_{\C}H^1_{\dR}(E_{u,\C})\xrightarrow{\sim}{\det}_{\C}H^1_B(E_u(\C), \C),$$
we have $${\det}_{\Z}H^1_{\dR}(\mathcal{E}_u/\Z)\xrightarrow{\sim}(2\pi i){\det}_{\Z}H^1_B(E_u(\C), \Z).$$
\end{lem}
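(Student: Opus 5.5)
The plan is to compute the period determinant explicitly. First fix a $\Z$-basis of $H^1_{\dR}(\mathcal{E}_u/\Z)$. Since $\mathcal{E}_u$ is given over $\Z$ (away from the bad fibres it is smooth, and Proposition~\ref{minimal}(3) gives a global minimal Weierstrass equation), $H^1_{\dR}(\mathcal{E}_u/\Z)$ should be free of rank~2 with basis the N\'eron differential $\omega = dx'/(2y'+x')$ and $\eta = x'\,\omega$ of the minimal model. In the original coordinates $x = 4x'$, $y = 8y'+4x'$ these become $\omega = dx/y$ and $\eta = \frac14 x\,dx/y$. Justifying that $\{\omega,\eta\}$ is a $\Z$-basis is standard for a minimal Weierstrass model; I would cite Katz's description of $H^1_{\dR}$ of a Weierstrass curve, valid when the Weierstrass model is the minimal one.

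Next compute $\det$ of the period matrix
\[
\begin{pmatrix} \int_{\gamma_1}\omega & \int_{\gamma_2}\omega\\ \int_{\gamma_1}\eta & \int_{\gamma_2}\eta \end{pmatrix}
\]
for a $\Z$-basis $\gamma_1,\gamma_2$ of $H_1(E_u(\C),\Z)$. The Legendre period relation gives $\omega_1\eta_2-\omega_2\eta_1 = \pm 2\pi i$ for the pair $(dx/y,\ x\,dx/y)$ attached to a Weierstrass equation $y^2 = 4x^3+\dots$. Our equation is $y^2 = x^3+\dots$, so it must be rescaled. Setting $Y = 2y$ turns it into $Y^2 = 4x^3+\cdots$, with $dx/Y = \tfrac12 dx/y$. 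Then $\det(dx/Y,\ x\,dx/Y) = \pm 2\pi i$, so $\det(dx/y,\ x\,dx/y) = \pm 4\cdot 2\pi i$, and hence $\det(\omega,\eta) = \pm\tfrac14\cdot 4\cdot 2\pi i = \pm 2\pi i$.

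Equivalently, the Legendre relation can be derived directly via the cup product: the de Rham cup product $\omega\cup\eta$ has trace $\pm1$ (the residue of $\eta$ against a local primitive of $\omega$ at $O$, using the local parameter $x'/y'$), while the Betti cup product on an integral basis has trace $\pm1$. The comparison isomorphism then introduces exactly one factor of $2\pi i$ on $H^2$, so the determinants differ by $2\pi i$ times a unit. This formulation is also cleaner for the integrality bookkeeping.

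The main obstacle is the first step: showing that $\{\omega, \frac14 x\,dx/y\}$ really spans the integral de Rham cohomology of the minimal regular model, including at the bad odd primes and at~$2$. At odd multiplicative primes the minimal Weierstrass model is not regular, and $\mathcal{E}_u$ is the regular model. Here I would use the fact that $H^1_{\dR}$ of the regular minimal model agrees with that of the Weierstrass model, arguing through the cup-product pairing: the pairing is perfect on $H^1_{\dR}(\mathcal{E}_u/\Z)$, and the trace of $\omega\cup\eta$ is a unit, which gives unimodularity of the sublattice spanned by $\omega$ and $\eta$. At~$2$, good reduction and the explicit minimal equation of Proposition~\ref{minimal}(3) make this direct.
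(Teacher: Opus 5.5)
Your proposal is correct and is essentially the paper's own proof. The paper likewise reads off the basis $\{dx/y,\tfrac14 x\,dx/y\}$ from the minimal equation of Proposition~\ref{minimal}(3) via Katz's identification $H^1_{\dR}\simeq H^0(\Omega^1(2O))$, applies Legendre's relation for $Y^2=4X^3-g_2X-g_3$, and notes that rescaling to $y^2=x(x+1)(x+u^2)$ multiplies the period determinant by exactly the factor $4$ that the $\tfrac14$ cancels (translations of $x$ do not affect it).

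The one divergence is your last paragraph. The paper applies Katz only over $\Z_2$, where the reduction is good, and gives no separate argument at the odd bad primes. Your cup-product route there should not be relied on as stated, because a perfect trace pairing on the de Rham cohomology of a non-smooth regular model is not available in general. It is also unnecessary: nothing later in the paper depends on the lattice at those primes, and $\tfrac14$ is a unit there anyway.
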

\begin{proof} Recall from Proposition~\ref{minimal}(3) that a global minimal Weierstrass equation is $${y'}^2+x'y'={x'}^3+\frac{u^2}{4}{x'}^2+\frac{u^2}{16}x',$$ where $y'=\frac{y-x}{8}$ and $x'=\frac{x}{4}$. It follows from $H^1_{\dR}(\mathcal{E}_u/\Z_2)\simeq H^0(\mathcal{E}_u/\Z_2, \Omega^1(2O))$ \cite[A.1.2.3]{K} that
\begin{equation*}
\left\{\frac{dx'}{2y'+x'},\,x'\,\frac{dx'}{2y'+x'}\right\}=\left\{\frac{dx}{y},\,\frac{1}{4}x\,\frac{dx}{y}\right\}
\end{equation*}
is a $\Z$-basis for $H^1_{\dR}(\mathcal{E}_u/\Z)$.

Consider $E_u/\C$ in the form $Y^2=4X^3-g_2X-g_3$, with $(X,Y)=(\wp(z), \wp'(z))$ for $z\in\C/\Lambda$. Note that $\frac{dX}{Y}=dz,\,X\,\frac{dX}{Y}=\wp(z)\,dz$. Let
$$\omega^{\pm}:=\int_{\gamma^{\pm}}\,\frac{dX}{Y},\,\,\eta^{\pm}:=\int_{\gamma^{\pm}}\,X\,\frac{dX}{Y},$$ where $H_1(E_u(\C),\Z)=\Z\gamma^+\oplus\Z\gamma^-$, so $\Lambda=\Z\omega^+\oplus\Z\omega^-$. (We may choose the signs of $\gamma^{\pm}$ in such a way that $\omega^+, \omega^-/i\in\R_{>0}$.)
By Legendre's period relation \cite[Exercise 6.4(d)]{S1}, we
now have~$\omega^-\eta^+-\omega^+\eta^-=2\pi i$.

Because~$Y^2=4X^3-g_2X-g_3$, we have $ (4Y)^2=(4X)^3-4g_2(4X)-16g_3$. Changing this to $y^2=x(x+1)(x+u^2)$ would transform the differentials as 
$$\frac{dx}{y}=\lambda\,\frac{d(4X)}{4Y},\,\,x\,\frac{dx}{y}=\lambda^{-1}\,(4X)\,\frac{d(4X)}{4Y},$$ for some $\lambda\in\C^{\times}$. Therefore integrating instead $\left\{\frac{dx}{y},\,\frac{1}{4}x\,\frac{dx}{y}\right\}$ would have produced $\lambda\omega^{\pm}$ and $\lambda^{-1}\eta^{\pm}$, with no impact on the determinant. This proves the lemma.
\end{proof}
As noted above, we have the alternative descriptions
\begin{equation} \label{reg22target}
H^2_{\DDD}(E_{u,\R}, \R(2))\simeq \frac{H^1_{\dR}(E_{u,\R})}{(2\pi i)^2H^1_B(E_u(\C), \R)^+}\simeq (2\pi i) H^1_B(E_u(\C), \R)^-
.
\end{equation}
Note that it is $1$-dimensional, hence equal to its determinant.
Recall from Section~\ref{BlKa} the integral line $$\DDD_{1,2} :=\det (H^1_{\dR}(\mathcal{E}_u/\Z))\otimes{\det}^{\vee}(H^1_B(E_u(\C), \Z(2\pi i)^2)^+$$ in $\det H^2_{\DDD}(E_{u,\R}, \R(2))=H^2_{\DDD}(E_{u,\R}, \R(2))$, where we are looking at the first isomorphism. 
\begin{prop}\label{Dint} Assume that $u\equiv 4\pmod{8}$ is an integer. Looking at the second isomorphism, $$\DDD_{1,2}=\frac{1}{2\pi i}\, H^1_B(E_u(\C), \Z)^-.$$
\end{prop}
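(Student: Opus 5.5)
The plan is to compute both sides explicitly in a basis adapted to complex conjugation, and to transport the integral line $\DDD_{1,2}$ through the elementary isomorphism between the two descriptions in \eqref{reg22target}.

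\textbf{Step 1: an adapted basis.} As in the proof of Proposition~\ref{taminf}, all $2$-torsion of $E_u$ is real, so $H_1(E_u(\C),\Z)=\Z\gamma^+\oplus\Z\gamma^-$. Let $e^+,e^-$ be the dual basis of $H^1_B(E_u(\C),\Z)$. Then $H^1_B(E_u(\C),\Z)^\pm=\Z e^\pm$, and $\det_\Z H^1_B(E_u(\C),\Z)=\Z\, e^+\wedge e^-$.

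\textbf{Step 2: the two factors of $\DDD_{1,2}$.} By Lemma~\ref{perioddet}, under the period isomorphism $\det_\Z H^1_{\dR}(\mathcal{E}_u/\Z)$ corresponds to $\Z\,(2\pi i)\,e^+\wedge e^-$. Since $(2\pi i)^2$ is real and fixed by conjugation, $H^1_B(E_u(\C),\Z(2\pi i)^2)^+=\Z(2\pi i)^2e^+$. Its dual line is therefore generated by $\bigl((2\pi i)^2e^+\bigr)^\vee$. Hence $\DDD_{1,2}$ is generated by
\[
(2\pi i)\,e^+\wedge e^-\otimes\bigl((2\pi i)^2e^+\bigr)^\vee .
\]

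\textbf{Step 3: passing to the second description.} For a one-dimensional quotient $W/U$ with $U$ one-dimensional, the canonical identification $\det W\otimes\det^\vee U\simeq W/U$ sends $u\wedge w\otimes u^\vee$ to the class of $w$, up to sign. Apply this with $W=H^1_{\dR}(E_{u,\R})$ and $U=(2\pi i)^2H^1_B(E_u(\C),\R)^+$, viewing both inside $H^1_B(E_u(\C),\C)$. The real de Rham space decomposes as $H^1_B(\R)^+\oplus iH^1_B(\R)^-$. Writing $(2\pi i)e^+\wedge e^-=\bigl((2\pi i)^2e^+\bigr)\wedge\bigl((2\pi i)^{-1}e^-\bigr)$, the generator above maps to the class of $(2\pi i)^{-1}e^-$, an element of $iH^1_B(\R)^-$. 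Projecting away the $+$-part is exactly the map realizing the second isomorphism in \eqref{reg22target}. As real subspaces of $H^1_B(\C)$ we have $iH^1_B(\R)^-=(2\pi i)H^1_B(\R)^-$, since $2\pi$ is real. So the image is $\frac1{2\pi i}\Z e^-=\frac1{2\pi i}H^1_B(E_u(\C),\Z)^-$, as claimed, up to the sign ambiguity that is harmless for a lattice.

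\textbf{Main obstacle.} The only delicate point is bookkeeping of conventions. I must check that the isomorphism between the two descriptions is literally induced by the decomposition $H^1_{\dR}(E_{u,\R})\simeq H^1_B(\R)^+\oplus iH^1_B(\R)^-$, with no hidden rescaling by a power of $2\pi$. I must also check that the determinant-of-quotient identification is the one implicit in the definition of $\DDD_{1,2}$. I will verify this directly from the definition of the isomorphism via the period map. All remaining ingredients, namely the integrality at $2$ and the Legendre relation, are already contained in Lemma~\ref{perioddet}.
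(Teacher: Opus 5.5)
Your proposal is correct and follows essentially the same route as the paper. Both arguments combine Lemma~\ref{perioddet} with the splitting $H^1_B(E_u(\C),\Z)=H^1_B(E_u(\C),\Z)^+\oplus H^1_B(E_u(\C),\Z)^-$ coming from the rectangular lattice, rewrite $(2\pi i)\,e^+\wedge e^-$ as $\bigl((2\pi i)^2e^+\bigr)\wedge\bigl((2\pi i)^{-1}e^-\bigr)$, and cancel the $+$-factor against its dual; you simply spell out, with adapted bases and the determinant-of-a-quotient identification, the bookkeeping that the paper compresses into one line.
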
 
\begin{proof} By Lemma~\ref{perioddet} we have 
\begin{alignat*}{1}
{\det}_{\Z}H^1_{\dR}(\mathcal{E}_u/\Z)& \xrightarrow{\sim}(2\pi i){\det}_{\Z}H^1_B(E_u(\C), \Z) \\ & \simeq \frac{1}{2\pi i}H^1_B(E_u(\C), \Z)^-\otimes H^1_B(E_u(\C), (2\pi i)^2\Z)^+
.
\end{alignat*}
Hence $$\DDD_{1,2}=\det (H^1_{\dR}(\mathcal{E}_u/\Z))\otimes{\det}^{\vee}(H^1_B(E_u(\C), (2\pi i)^2\Z)^+\simeq \frac{1}{2\pi i}\, H^1_B(E_u(\C), \Z)^-,$$ as required.
\end{proof}

\begin{remar} \label{rem8.6}
We shall determine~$ R_u $ in~\eqref{Ru-def} in Section~\ref{hyper-sec}
by other means for the elements that we shall construct in Section~\ref{K2elements},
but for the convenience of the reader we recall here a more classical description
(cf.~\cite[p.251/252]{dewi88} or~\cite[\S3]{DdJZ}).

By \cite[(2.6.1)]{DS}, if $g\in\OOO^*(X)$, considered as an element of $H^1_{\MMM}(X, \Q(1))$, then $\reg(g)\in H^1_{\DDD}(X_{\R}, \R(1))$ is represented by the $C^{\infty}~~$ $0$-form $\phi_g=\log |g|$. Note that 
$$dg=\pi_0(\omega_g),\,\,\,\omega_g=d\log g=d\log |g|+i\,d\arg(g).$$
Given $f,g\in\OOO^*(X)$, we may produce an element~$f\cup g$ in~$ H^2_{\MMM}(X, \Q(2))$,
and by the compatibility of regulators and cup products, we have~$\reg(f \cup g)=[\phi_f]\cup[\phi_g]$.
According to \cite[7 lines below (2.5.1)]{DS}, the cup product on the right is represented by the $C^{\infty}~~$ $1$-form $\phi_f\wedge\pi_1\omega_g-\phi_g\wedge\pi_1\omega_f$. This is
\begin{equation*}
\eta(f,g):= i(\log |f|\,d\arg(g)-\log |g|\,d\arg(f))
.
\end{equation*}
As mentioned in the introduction, we shall view~$ H^2_{\MMM}(E_u, \Q(2)) $ 
as~$ K_2^T(E_u) \otimes_\Z \Q $ (see the beginning of Section~\ref{2-BK}). Here an element~$ \alpha $ of~$ K_2^T(E_u) $
is a sum of such~$ f \cup g $ that is in the kernel of the tame symbol (see~\eqref{K2locseq},
where~$ f \cup g = \symb f g $ in~$ K_2(\Q(E_u)) $). That~$ \alpha $
is in the kernel of the tame symbol implies that the sum of the~$ \eta(f,g) $ gives a well-defined integration map from $H_1(E_u(\C), \Z)$ to $i\R$. This is explained in the paragraph straddling pages 342 and 343 of \cite{DdJZ}. Hence it represents a class~$ \eta_\alpha $ in~$ i H_B^1(E_u(\C) \setminus D , \R)^-$ coming from
its subspace~$ i H_B^1(E_u(\C), \R)^-$, where $D$ is the union of the sets of zeros and poles of all the $f$ and $g$. Up to sign, $ R_u = \frac{1}{2 \pi i} \int_{\gamma^-} \eta_\alpha $, with~$ \gamma^- $
a generator of~$ H_B^1(E_u(\C), \Z)^- $ (cf.~\cite[(3.4)]{DdJZ},
where factors~$ i $ were cancelled).
\end{remar}

\section{\texorpdfstring{$\ell$}{l}-adic regulator maps} \label{ellreg}

In this section and Section~\ref{K2elements} we discuss in detail
a map~$ \reg_{\ell} $, which in Section~\ref{2-BK} will give
rise to the map~$ \reg_{\ell}^{\Q_\ell} $ mentioned at the beginning of Section~\ref{BlKa}.
In Section~\ref{2-BK} we shall need information on the (in)divisibility of the image
under~$ \reg_{\ell} $ of certain elements, which we shall prove
in Theorem~\ref{new-index-theorem}.
Because the proof of this result is subtle
and depends on computation with torsion,
we work with the original
Chern class map on~$ K_2(E_u) $ in order to construct~$ \reg_{\ell} $,
and relate it to the map on~$ H^2_{\MMM}(E_u, \Q(2)) $ only in Section~\ref{2-BK}.

We discuss the set-up of \cite[p.370]{BK} for the elliptic curves
defined by~\eqref{Eueq}.
For any prime number~$\ell$, and~$ m \ge 1 $, we have a group homomorphism 
$$\mathrm{ch}_{\ell,m}: K_2(E_u)\rightarrow \het^2(E_u, (\Z/\ell^m\Z)(2))=\het^2(E_u, \mu_{\ell^m}^{\otimes 2}).$$
This is Soul\'e's Chern class map  \cite{G, Sh, So}, modified as in \cite[(2.2), Lemma 2.3]{W} when~$\ell=2$.
In other places (e.g., \cite{So, Ta, W}) this map is denoted~$\mathrm{ch}_{2,2}$,
where the subscripts refer to the degree and the twist in the codomain.
Because those are fixed in our context, we choose indices relating
to the coefficients instead.

There is a Hochschild-Serre spectral sequence
\begin{equation} \label{HS-ss}
E_{2,m}^{p,q}=H^p(\Q, \het^q(E_{u,\Qbar}, (\Z/\ell^m\Z)(2)))\implies \het^{p+q}(E_u, (\Z/\ell^m\Z)(2))
\end{equation}
for~$ E_{u, \Qbar} / E_u $ (see \cite[Remark~III.2.21(b)]{Mil80}). It gives rise to a filtration 
$$ \het^2(E_u, (\Z/\ell^m\Z)(2))=\Fil_m^0\supseteq \Fil_m^1\supseteq \Fil_m^2\supseteq \Fil_m^3=\{0\},$$
with $\Fil_m^p/\Fil_m^{p+1}$ a subquotient of $H^p(\Q, \het^{2-p}(E_{u,\Qbar}, (\Z/\ell^m\Z)(2)))$ for $p=0,1,2$.
For~$ p = 0 $ and~1 those subquotients are simply submodules
because all incoming higher differentials start at trivial groups.

The spectral sequence and the Chern class map are compatible with the natural maps as~$ m $ varies.
Taking inverse limits over~$ m $ we get a filtration
\begin{equation*}
\het^2(E_u, \Z_\ell(2))=\Fil^0\supseteq \Fil^1\supseteq \Fil^2\supseteq \Fil^3=\{0\}
.
\end{equation*}
Note that~$ \Fil_m^0 / \Fil_m^1 $ is a submodule of~$H^0(\Q, \het^2(E_{u,\Qbar}, (\Z/\ell^m\Z)(2)))$,
which is isomorphic to~$ H^0(\Q, (\Z/\ell^m\Z)(1))$.
This last group is trivial if~$ \ell $ is odd, because
it is isomorphic to the subgroup of $ \Z/\ell^m \Z $ annhilated by $ \overline{2} - \overline{1} $,
and for $ \ell = 2 $ it has order~2 because it is isomorphic to
the subgroup annihilated by $ \overline{3} - \overline{1} $.
Taking inverse limits over~$ m $ then shows that~$ \Fil^0 = \Fil^1 $.
Composing the inverse limit~$ \ch_{\ell} $ of the~$ \ch_{\ell, m}$ with the
projection~$ \pi_{\ell} : \het^2(E_u, \Z_\ell(2)) \to\Fil^0/\Fil^2 = \Fil^1/\Fil^2 $, we obtain a group homomorphism
\begin{equation*}
\pi_\ell \circ \ch_{\ell} : K_2(E_u) \to H^1(\Q, \het^1(E_{u,\Qbar}, \Z_\ell(2)))
.
\end{equation*}
In order to justify the target, note
that
\begin{equation*}
\Fil_m^1 / \Fil_m^2 = E_{\infty,m}^{1,1} \subseteq E_{2,m}^{1,1} = H^1(\Q, \het^1(E_{u,\Qbar}, (\Z/\ell^m\Z)(2)))
\end{equation*}
and that~$\underset{m}\varprojlim\,H^1(\Q, \het^1(E_{u,\Qbar}, (\Z/\ell^m\Z)(2)))$ 
identifies with~$  H^1(\Q, \het^1(E_{u,\Qbar}, \Z_\ell(2))) $
by the corollary on~\cite[p.261]{Ta}.

We recall the exact localisation sequence
\begin{equation} \label{K2locseq}
 \dots \to \oplus_P K_2(\Q(P)) \to K_2(E_u) \to K_2(\Q(E_u)) \overset{T}{\to} \oplus_P \, \Q(P)^\times \to \dots
\,,
\end{equation}
where $ P $ runs through the closed points of~$ E_u $,
and the~$ P $-component~$ T_P $ of the tame symbol~$ T $ is given by
mapping~$ \symb f g $ to~$ (-1)^{\ord_P(f) \ord_P(g)} \frac{f^{\ord_P(g)}}{g^{\ord_P(f)}} (P) $.
We recall
that we have the \emph{product formula}
\begin{equation} \label{product-formula}
\prod_P \Nm_{\Q(P)/\Q} (T_P(\alpha)) = 1
\end{equation}
in~$ \Q^\times $ for any~$ \alpha $ in~$ K_2(\Q(E_u)) $.
Writing~$ K_2^T(E_u) $ for the kernel of~$ T $,
we have that the kernel of the surjection~$ K_2(E_u) \to K_2^T(E_u) $ is torsion
because~$ K_2 $ of any number field is torsion (see \cite[p.155]{Wei04}).
Hence there is an induced map~$ \reg_{\ell} $ that
fits into a commutative diagram
\begin{equation}
\begin{split} \label{K2TregCD}
\xymatrix{
 K_2(E_u) \ar[r] \ar[d]_-{\mathrm{ch}_{\ell}} & K_2^T(E_u) \ar[d]^{\reg_{\ell}}
\\
\het^2(E_u, \Z_\ell(2)) \ar[r]^-{\pi_\ell} & H^1(\Q, \het^1(E_{u,\Qbar}, \Z_\ell(2)))_\tf
\,.
}
\end{split}
\end{equation}

Because~$ E_u $ has rational points, we can strengthen the above.
Note that below, the \'etale cohomology group~$ \het^2(\Spec(\Q), \Z_\ell(2)) = \varprojlim_m H^2(\Q , (\Z/\ell^m\Z) (2) ) $
plays a role, not the continuous Galois cohomology group~$ H^2(\Q, \Z_\ell(2) ) ) $.

\begin{prop} \label{Het2sos}
The pullback~$ \het^2(\Spec(\Q), \Z_{\ell}(2)) \to \het^2(E_u, \Z_{\ell}(2)) $
along the structure map~$ E_u \to \Q $ is injective. We have a short exact sequence
\begin{equation*}
0 \to \het^2(\Spec(\Q) , \Z_{\ell}(2)) \to \het^2(E_u, \Z_{\ell}(2)) \buildrel{\pi_{\ell}}\over{\to} H^1(\Q, \het^1(E_{u,\Qbar}, \Z_{\ell}(2)) ) \to 0
\,,
\end{equation*}
which can be split by pullback to any rational point of~$ E_u $.
\end{prop}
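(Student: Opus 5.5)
We work with finite coefficients $(\Z/\ell^m\Z)(2)$ throughout, using the Hochschild-Serre spectral sequence~\eqref{HS-ss}, and pass to the inverse limit over~$m$ only at the end. The plan is to show that a rational point forces the relevant differentials to vanish and splits off the bottom row.

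\textbf{Step 1: the section.} Fix a rational point $O$ of $E_u$; for instance the origin, or any point in $E_u[2]$ by Proposition~\ref{torsion}. Let $s\colon \Spec(\Q)\to E_u$ be the corresponding section of the structure map $f\colon E_u\to\Spec(\Q)$. Since $f\circ s=\mathrm{id}$, we get $s^*\circ f^*=\mathrm{id}$ on $\het^2(\Spec(\Q),(\Z/\ell^m\Z)(2))$. Hence $f^*$ is injective with left inverse $s^*$, compatibly in~$m$, and the same holds after taking $\varprojlim_m$. This gives the injectivity claim.

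\textbf{Step 2: identifying $f^*$ with the edge map.} The pullback $f^*$ is the edge map $E_{2,m}^{2,0}=H^2(\Q,\het^0(E_{u,\Qbar},(\Z/\ell^m\Z)(2)))\to E^{2,0}_{\infty,m}=\Fil_m^2\subseteq \het^2$. Injectivity of $f^*$ shows that no differential hits $E^{2,0}$; the only candidate is $d_2\colon E_2^{0,1}\to E_2^{2,0}$, so it vanishes. Thus $\Fil_m^2=H^2(\Q,(\Z/\ell^m\Z)(2))$. We also need $d_2\colon E_2^{1,1}\to E_2^{3,0}$ to vanish, so that $\Fil^1_m/\Fil^2_m=E_2^{1,1}$, and $d_2\colon E_2^{0,2}\to E_2^{2,1}$ and $d_3\colon E_2^{0,2}\to E_2^{3,0}$ to vanish. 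The standard argument is that the section makes $f^*$ split injective in every degree, so all differentials landing in the row $q=0$ vanish. A less standard point is the vanishing of $d_2^{1,1}$ and the identification of $E^{0,2}_\infty$.

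\textbf{Step 3: killing the $E^{0,2}$ term.} Here I would use the class of the point $O$. The trace map identifies $\het^2(E_{u,\Qbar},(\Z/\ell^m\Z)(2))$ with $(\Z/\ell^m\Z)(1)$, and the cycle class of $O$ lies in $\het^2(E_u,(\Z/\ell^m\Z)(1))$. Cupping it with a class from $H^0(\Q,(\Z/\ell^m\Z)(1))$ pulled back from $\Spec\Q$ maps onto $E_2^{0,2}$. So every element of $E_2^{0,2}$ survives, and the differentials out of $E^{0,2}$ vanish. This gives a four-term filtration rather than the claimed short exact sequence, so the statement must really concern the limit. The text already showed that $\Fil^0=\Fil^1$ after $\varprojlim_m$, since $H^0(\Q,(\Z/\ell^m\Z)(1))$ has order at most~$2$ and the transition maps kill it. Hence $\het^2(E_u,\Z_\ell(2))=\Fil^1$.

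\textbf{Step 4: the limit.} At finite level we have a short exact sequence $0\to H^2(\Q,(\Z/\ell^m\Z)(2))\to \Fil_m^1\to H^1(\Q,\het^1(E_{u,\Qbar},(\Z/\ell^m\Z)(2)))\to 0$, split compatibly by $s^*$. Because the splitting is compatible, the inverse limit of these split sequences remains exact, with no $\varprojlim^1$ issue. Combining this with $\Fil^0=\Fil^1$ and the corollary of \cite[p.261]{Ta} identifying $\varprojlim_m H^1$, we obtain the stated sequence. Any other rational point gives a splitting by the same argument.

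\textbf{Main obstacle.} I expect the main difficulty to be Step~2, specifically the vanishing of $d_2\colon E_2^{1,1}\to E_2^{3,0}$. I would first try to deduce it from the fact that $f^*$ is injective in degree~$3$ as well, which holds because of the section.
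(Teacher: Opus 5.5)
Your proposal is correct and follows essentially the same route as the paper. The section makes $f^*$ split injective in every degree, so every differential with target in the row $q=0$ vanishes. This already covers $d_2\colon E_2^{1,1}\to E_2^{3,0}$, so your ``main obstacle'' is settled by your own Step~2. The compatible splittings by pullback to the rational point then make the inverse limit of the finite-level sequences exact, and $\Fil^0=\Fil^1$ together with Tate's corollary finish the argument.

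Your Step~3 cycle-class argument showing $E_\infty^{0,2}=E_2^{0,2}$ is correct but not needed, since that graded piece dies in the inverse limit anyway.
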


\begin{proof}
Pulling back~$  \het^2(E_u, \Z_{\ell}(2)) $ to any fixed~$ \Q $-rational point of~$ E_u $ shows that
the pullback along the structure map is injective. The same argument
also shows that in the spectral sequence~\eqref{HS-ss} all differentials with target~$ E_{s,m}^{p,0} $
for~$ s \ge 2 $ are trivial.
In particular, we have~$ E_{\infty,m}^{1,1} = E_{2,m}^{1,1} = H^1(\Q, \het^1(E_{u, \Qbar} , (\Z/\ell^m\Z)(2))) $
as well as~$ \Fil_m^2 = E_{\infty,m}^{2,0} = E_{2,m}^{2,0} = H^2(\Q, (\Z/\ell^m\Z)(2) ) $.
Taking inverse limits over the resulting short exact sequences~$ 0 \to \Fil_m^2 \to \Fil_m^1 \to E_{2,m}^{1,1} \to 0 $
is exact because pulling back to the fixed rational point provides
compatible splittings of the injections~$ \Fil_m^2 \to \Fil_m^0 $.
This results in the exact sequence in the proposition because we saw before that~$ \Fil^1 = \Fil^0 = \het^2(E_u, \Z_\ell(2)) $,
obtained a description of the inverse limit of the~$ E_{2,m}^{1,1} $,
and the Hochschild-Serre spectral sequences for~$ \Qbar/\Q $ provide compatible
isomorphisms~$ H^2(\Q, (\Z/\ell^m\Z)(2)) \simeq \het^2(\Spec(\Q), (\Z/\ell^m\Z)(2)) $.
\end{proof}

\section{Explicit elements of~\texorpdfstring{$ K_2^T(E_u) $}{K2TEu}} \label{K2elements}

Let~$ C $ be a regular, projective curve over a number field~$ k $,
and~$ \CCC $ a regular, flat and proper model of~$ C $ over
the ring of algebraic integers of~$ k $.
If~$ D $ is an irreducible curve in~$ \CCC $,
then by slight abuse of notation, we shall write~$ T_D $ for the tame
symbol associated with the generic point of~$ D $ in~$ \CCC $.
It is given by mapping~$ \symb f g $ in~$ K_2(k(C)) $ to~$ (-1)^{\ord_D(f) \ord_D(g)} \frac{f^{\ord_D(g)}}{g^{\ord_D(f)}} (D) $
in~$ \F(D)^\times $, with~$ \F(D) $ the function field of~$ D $.
If~$ D $ is not contained in a closed fibre of~$ \CCC $
then its generic point is a (closed) point~$ P $ of the generic
fibre~$ C $, $ \F(D) = k(P) $, and~$ T_D $ coincides with~$ T_P $
as defined right after~\eqref{K2locseq}.
So if we let
\begin{equation*}
\INT C = \cap_D (\ker(T_D))
\end{equation*}
then we have~$ \INT C \subseteq K_2^T(C) $.
We recall from~\cite[Proposition~4.1]{Li-dJ} and its proof
that this subgroup of~$ K_2^T(C) $ is independent
of the choice of~$ \CCC $, and, in fact, is
the image of~$ K_2(\CCC) $ in $ K_2(k(C)) $ under localisation from~$ \CCC $ to its generic point
(just as~$ K_2^T(C) $ is the image of~$ K_2(C) $ in~$ K_2(k(C)) $
under the localisation in~\eqref{K2locseq}).

For~$ E_u $ as in~\eqref{Eueq},
we set~$ v = \frac{x+u^2}{y} $,
$ w =  \frac{u - x v}{u+xv} = \frac{uy - x (x+u^2)}{uy+x(x+u^2)} = \frac{u(x+1) - y}{u(x+1)+y}$,
and~$ h = \frac{u(x+1) + y}{x+u} $.
Then~$ x = \frac u v \frac{1-w}{1+w} $
and~$ y = \frac{x+u^2}{v} $, so~$ \Q(v,w) = \Q(x,y) $.
From
\begin{alignat*}{1}
v - v^{-1} & = \frac{x+u^2}{y} - \frac{y}{x+u^2}
=
\frac{(x+u^2) - x (x+1)}{ y}
=
\frac{ u^2 - x^2 }{ y}~~\text{and}
\\
w - w^{-1} & =
\frac{u(x+1) - y}{u(x+1)+y} - \frac{u(x+1)+y}{u(x+1) - y}
=
\frac{-4uy} {u^2(x+1) - x (x+u^2)}
=
\frac{-4uy} {u^2 - x^2}
\end{alignat*}
we see that the (for~$ u \ne 0, \pm1 $ irreducible) polynomial 
\begin{equation} \label{VWZeq}
(V^2 - Z^2) (W^2- Z^2) + 4 u V W Z^2 = 0
\end{equation}
defines a (singular) model of~$ E_u $ in~$ \PP_\Q^2 $ with homogeneous
coordinates~$ [V, W, Z] $, where~$ v = V/Z $ and~$ w = W/Z $.

We can now define the element~$ \alpha_u $ mentioned in the
introduction. In order to simplify the notation, we suppress
the subscript~$ u $.

\begin{prop} \label{tameINT}
Let~$ E_u $ be as in~\eqref{Eueq}.

\begin{enumerate}
\item
The elements~$ \symb -1 x $, $ \symb -1 x+1 $
and~$ \alpha = \symb v w + \symb -1 h $
of~$ K_2(\Q(E_u)) $ are in~$ K_2^T(E_u) $.

\item
If $ 4 u $ is an integer then $ 2 \alpha $ is in~$ \INT E_u $,
and if~$ 4 u $ is not an integer, then $ m \alpha $ is not in~$ \INT E_u $ for
any integer~$ m \ne 0 $.
\end{enumerate}
\end{prop}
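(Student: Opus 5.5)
Part~(1) is a divisor calculation followed by tame symbols at finitely many points. First I will compute the divisors of $x$, $x+1$, $v$, $w$ and $h$ on $E_u$, using Proposition~\ref{torsion}. We have $\mathrm{div}(x)=2(0,0)-2O$, $\mathrm{div}(x+1)=2(-1,0)-2O$ and $\mathrm{div}(x+u^2)=2(-u^2,0)-2O$, while $\mathrm{div}(y)$ is the sum of the three nontrivial $2$-torsion points minus $3O$. Hence $\mathrm{div}(v)=(-u^2,0)+O-(0,0)-(-1,0)$. For $w$ and $h$, the lines $y=\pm u(x+1)$ and the function $x+u$ have zeros and poles only among the eight rational $4$-torsion points together with $E_u[2]$; I will find them explicitly from Proposition~\ref{torsion}(2). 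For instance, $y=u(x+1)$ meets the curve at $(-1,0)$ and at the points with $x(x+u^2)=u^2(x+1)$, i.e.\ $x=\pm u$, which gives $(u,u(u+1))$ and $(-u,-u(u-1))$.

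For $\symb -1 x$ and $\symb -1 x+1$, the tame symbol at each point is $(-1)^{\ord_P}$, which is trivial because all orders are even. For $\alpha$, I will compute $T_P(\symb v w)$ at each point in the union of the supports, using the plane model~\eqref{VWZeq} to read off leading terms. At such a point $P$ both $v,w$ take values in $\{0,\infty,\pm1\}$ or have simple zeros and poles, so $T_P(\symb v w)$ lies in $\{\pm1\}$. The same holds for $T_P(\symb -1 h)=(-1)^{\ord_P h}$. I will then check that the signs cancel point by point. The expected obstacle is bookkeeping with signs at the points where both $v$ and $w$ vanish or have poles. I will handle these with local parameters, using the relation $(v^2-1)(w^2-1)=-4uvw$ coming from~\eqref{VWZeq}. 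The term $\symb -1 h$ is designed exactly to absorb the leftover signs.

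For part~(2), I will use the description $\INT E_u=\cap_D\ker T_D$ over the irreducible components $D$ of closed fibres of a regular model $\mathcal E_u$. Horizontal components are already handled by part~(1). For $2\alpha$, the term $2\symb -1 h=\symb 1 h$ vanishes, so it suffices to consider $2\symb v w$. At a vertical component $D$ over a prime $p$, the symbol $T_D(\symb v w)$ is a function on $D$, and I expect it to be $\pm$ a constant or $\pm1$ when $4u$ is integral. I will verify this by working on the Weierstrass model, which is minimal and regular away from the multiplicative primes, and on the blown-up components of the N\'eron model at $p\mid u(u^2-1)$. On those components the reductions of $v$ and $w$ are monomial in the Tate-curve coordinate, so $T_D(\symb v w)=\pm1$ and squaring kills it. The main work is this check at the bad fibres, which I will do via the Tate parametrisation as in the proof of Theorem~\ref{f-conditions}.

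For the converse, suppose $4u\notin\Z$ and take a prime $p$ with $\ord_p(4u)<0$. On the component of the special fibre at $p$ met by the generic point of the Gauss valuation on $v$ (or $w$), I will show that $T_D(\symb v w)$ is a nonconstant function, or a constant of infinite order such as a power of $4u$ coming from the term $4uVWZ^2$ in~\eqref{VWZeq}. Multiplying by $m$ does not kill this, so $m\alpha\notin\INT E_u$.
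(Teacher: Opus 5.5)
Your part~(1) is essentially the paper's argument; in fact the divisors of $v$ and $w$ are disjoint, so the sign bookkeeping you anticipate at common zeros and poles never arises. The gap is in the forward direction of part~(2), for four reasons.

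\begin{enumerate}
\item Your fibre-by-fibre plan, using the Weierstrass model and Tate curves, does not cover the statement, which is for every $u\in\Q$ with $4u\in\Z$. For $u=3$ one has $\Delta=2^{10}3^4$, so the model is minimal at~$2$, and $j$ is $2$-integral. Hence $E_3$ has additive reduction at~$2$ and there is no Tate parametrisation. For $u\notin\Z$ (e.g.\ $u=\frac14$) the equation $y^2=x(x+1)(x+u^2)$ is not even integral at~$2$.
\item Your key claim at multiplicative primes is false. For an odd $p\mid u$, the pole $(0,0)$ and the zero $(-u^2,0)$ of $v$ are $t(\pm Q^{1/2})$. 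These reduce to two distinct points of the same middle component of the $I_n$-polygon, so $v$ is not monomial on that component.
\item Monomial restrictions would not give $T_D(\{v,w\})=\pm1$ anyway. You would only get a constant times a power of the coordinate, and a constant in $\F_q^\times$ other than $\pm1$ is not killed by doubling.
\item Nothing in your sketch uses $4u\in\Z$, although the converse shows this hypothesis is exactly what matters.
\end{enumerate}

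The missing idea is to avoid regular models altogether. When $4u\in\Z$, \eqref{VWZeq} defines $\DDD\subset\PP^2_\Z$. Take any regular proper model $\EE$ with a morphism $\EE\to\DDD$ that is the normalisation on generic fibres.

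The zeros and poles of $v=V/Z$ and $w=W/Z$ lie on $VWZ=0$. This locus meets each $\DDD_p$ only in $[0,\pm1,1]$, $[\pm1,0,1]$, $[1,0,0]$ and $[0,1,0]$. Any vertical component $C$ of $\EE$ not contracted to one of these points has $\ord_C(v)=\ord_C(w)=0$, hence $T_C(\{v,w\})=1$.
\begin{itemize}
\item Over $[0,\pm1,1]$, write $2\{v,w\}=\{v,w^2\}$. Here $w^2$ is regular with value~$1$, so the tame symbol is trivial.
\item Over $[1,0,0]$, use $(1-Z^2/V^2)\bigl((W/Z)^2-1\bigr)=-4uW/V$. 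Because $4u\in\Z$, the right-hand side is regular and vanishes there, while $1-Z^2/V^2$ is a unit. So again $w^2$ is regular with value~$1$.
\item The remaining points follow by symmetry.
\end{itemize}
This handles every prime uniformly, including~$2$ and additive fibres.

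Your converse is essentially the paper's once made precise. Drop the alternative ``a constant of infinite order such as a power of $4u$'': constants on a vertical component lie in a finite field, and $4u$ is not a $p$-adic unit. What works is the following. Clear the denominator $d$ of $4u$; for $p\mid d$ the fibre $\DDD_p$ is $VWZ^2=0$. The local ring of $\DDD$ at the generic point of $V=0$ is a discrete valuation ring. There $T(\{v,w\})$ is a nonzero power of $Z/W$, which is transcendental over $\F_p$. Finally, some component of $\EE_p$ dominates this component.
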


\begin{proof}
(1)
Because~$ (x) = 2 [(0,0)] - 2 [O] $ and~$ (x+1) = 2 [(-1,0)] - 2 [O] $,
it is clear that~$ \symb -1 x $ and~$ \symb -1 x+1 $ are in~$ K_2^T(E_u) $.

In order to show the same for~$ \alpha $, note that we have the divisors
\begin{alignat*}{1}
(v) & =  [(-u^2, 0)]  - [(0,0)] - [(-1,0)] + [O]
\\
(y+u(x+1)) & = [(-u,u^2-u )] + [(-1,0)] + [(u, - u^2-u)] - 3 [O]
\\
(y-u(x+1)) & = [(-u,-u^2+u )] + [(-1,0)] + [(u, u^2+u)] - 3 [O]
\\
(x+u) & = [(-u,- u^2 + u )] + [(-u, u^2 - u )] - 2 [O]
\,,
\end{alignat*}
where points with different notation are distinct because
of our assumptions on~$ u $.
Then
\begin{alignat*}{1}
(w) & = [(-u,-u^2+u )] + [(u, u^2+u)] - [(-u,u^2-u )]  - [(u, - u^2-u)]
\\
(h) & = [(-1,0)] + [(u, - u^2-u)] - [(-u,- u^2 + u )] - [O]
\,.
\end{alignat*}
Recall from just after~\eqref{K2locseq} that,
for a point~$ P $ on the curve,
the~$ P $-component~$ T_P $ of the tame symbol~$ T $
maps~$ \symb f g $ in~$ K_2(\Q(E_u)) $ to~$ (-1)^{\ord_P(f) \ord_P(g)} \frac{f^{\ord_P(g)}}{g^{\ord_P(f)}} (P) $.
We compute this for~$ P $ occurring in the divisors
of~$ v $, $ w $ and~$ h $.
The divisors of~$ v $ and~$ w $ have no common points, so that~$ T_P(\symb v w ) $
is easy to calculate, as is~$ T_P(\symb -1 h ) $.

If~$ P = (-u^2,0) $ or~$ (0,0) $, then~$ w(P) = 1 $, so that~$ T_P(\alpha) = T_P(\symb v w ) = 1 $.
If~$ P = (-1,0) $ then~$ T_P(\symb v w ) = w(P)^{-1} = -1 = T_P(\symb -1 h ) $, and~$ T_P(\alpha) = 1 $.
If~$ P = (-u, -u^2+u) $ or~$ (u, -u^2-u) $ then~$ v(P) = -1 $,
from which we obtain that~$ T_P(\symb v w ) = -1 = T_P(\symb -1 h ) $, and~$ T_P(\alpha) = 1 $.
If~$ P = (u, u^2+u) $ or~$ (-u,u^2-u) $, then~$ v(P) = 1 $, and~$ T_P(\alpha) = T_P(\symb v w ) = 1 $.
Finally, we have~$ T_O(\alpha) = 1 $ by the product formula~\eqref{product-formula}.
(Alternatively, writing~$ w = \frac{u(x/y+1/y)-1}{u(x/y+1/y)+1} $
we see that~$ w(O) = -1 $, so that~$ T_O(\symb v w ) = - 1 = T_O(\symb -1 h ) $.) 

(2)
First we assume that~$ 4 u $ is an integer.
In order to verify that $ 2 \alpha = 2 \symb v w $ is in~$ \INT E_u $,
we follow the set-up of the proofs of \cite[Theorem~8.3(1)]{DdJZ}
and \cite[Theorem~4.2]{Li-dJ}.
Because of our assumption, \eqref{VWZeq} defines a subscheme~$ \DDD $ of~$ \PP_\Z^2 $.
Applying~\cite[Corollary~8.3.51]{Liu} to the normalisation of~$ \DDD $,
we see that there exists a regular, flat and proper model~$ \EE / \Z $ of~$ E_u / \Q $
with a morphism~$ \EE \to \DDD $ that on the generic
fibres is the normalisation map.
For an irreducible curve~$ C $ in~$ \EE $, we now show~$ T_C(2\alpha) = 1 $
by considering three cases.

If~$ C $ meets the generic
fibre~$ E_u $ then we know that~$ T_C(\alpha) = 1 $ because
this tame symbol corresponds to that of the generic point of~$ C $,
which is a closed point of~$ E_u $.
So we may assume that~$ C $ does not meet the generic fibre, hence
is contained in a fibre~$ \EE_p $ of~$ \EE $ above a prime number~$ p $.

If the image of~$ C $ in~$ \DDD $ is a curve~$ C' $, then~$ C' $
is in~$ \DDD_p $. But the poles and zeroes of~$ v = V/Z $ and~$ w = W/Z $
on~$ \DDD $ are contained in its subscheme~$ \YYY $ defined by~$ VWZ = 0 $ in~\eqref{VWZeq},
which meets~$ \DDD_p \subset \PP_{\F_p}^2 $ in the points~$ [0, \pm 1, 1] $, $ [\pm 1, 0, 1] $, $ [1,0,0] $, 
or~$ [0,1,0] $. Hence~$ v $ and~$ w $ are generically defined on~$ C' $
and~$ C $, which implies that~$ T_C(2\alpha) = 1 $.

Finally, suppose that the image of~$ C' $ in~$ \DDD $ is a point~$ P $ in~$ \DDD_p $.
If~$ P $ is not in~$ \YYY $ then both~$ v = V/Z $ and~$ w = W/Z $ are regular with non-zero
values at~$ P $, so both are constant as functions on~$ C $,
hence~$ T_C(2\alpha) = 1 $.
If~$ P = [0, \pm 1, 1] $ then we write~$ 2 \alpha = \symb V/Z (W/Z)^2 $
and note that~$ (W/Z)^2 $ is regular with value~1 at~$ P $.
This implies that~$ T_C(2 \alpha) = 1 $ because~$ (W/Z)^2 $ on~$ \EE $
is a function that restricts to~1 on~$ C $.
For~$ P = [1,0,0] $ we see from~\eqref{VWZeq} that~$ (1-Z^2/V^2) ( (W/Z)^2 - 1 )= 4 u W/V $
on~$ \DDD $, and because~$ 4 u $ is an integer this implies that~$ (W/Z)^2 $ is regular at~$ P $ with value~1,
so that~$ T_C(2 \alpha) = 1 $ also in this case.
If~$ P  = [\pm 1, 0, 1] $ or~$ [0,1,0] $ then~$ T_C(2 \alpha) = 1 $
by symmetry.

If~$ 4 u $ has denominator~$ d > 1 $ then we multiply~\eqref{VWZeq} by~$ d $,
and the resulting equation also defines a subscheme~$ \DDD $ of~$ \PP_\Z^2 $.
There exists again a regular, flat and proper model~$ \EE / \Z $ of~$ E_u / \Q $
with a morphism~$ \EE \to \DDD $ that on the generic
fibres is the normalisation map. Let~$ p $ be a prime number
dividing~$ d $. Because the fibre~$ \DDD_p $ of~$ \DDD $ above~$ p $
is defined by~$ V W Z^2 = 0 $ in~$ \PPP_{\F_p}^2 $, we see that,
if we let~$ \eta $ in~$ \DDD $ be the generic point of the component~$ C' $ defined
by~$ V = 0 $ in~$ \DDD_p $, then the local ring~$ \OOO_{\DDD,\eta} $
is normal, hence a discrete valuation ring.
Therefore the corresponding tame symbol~$ T_{C'} $ is defined,
and~$ T_{C'}{ \symb v w } = Z/W $,
which has infinite order in~$ \F_p(C')^\times $.
There is a irreducible component~$ C $ of~$ \EE_p $ surjecting
onto~$ C' $, and then~$ T_C( \symb v w ) $ has infinite order in~$ \F_p(C)^\times $ as
well.
\end{proof}

\begin{remar} \label{tamenotINTrem}
(1)
On~$ E_u = E_{-u} $ we have the identities~$ v_{-u} = v_u $, $ w_u w_{-u} = 1 $,
as well as~$ h_u h_{-u} = x+1 $, so that~$ \alpha_u + \alpha_{-u} = \symb -1 x+1 $.

(2)
In Theorem~\ref{new-index-theorem} we shall prove a statement
that implies~$ \alpha $ is not in~$ \INT E_u $ if~$ 4 u $ is an integer.
\end{remar}

Our next goal in this section is to prove certain indivisibility
statements (see Theorem~\ref{new-index-theorem}). These statements
are modulo torsion, and in their proofs we control such torsion by
means of the commutative diagram~\eqref{regCD} and Proposition~\ref{H1G-torsion},
which describes the torsion of the group in the top right corner
of the diagram.

For that diagram we shall use the pullback of elements from $ K_2(E_u) $
or $ K_2^T(E_u) $ to (closed) points. We shall discuss this for~$ E_u $, but the method works for any regular curve over a field.

Fix a closed point~$ R $ of~$ E_u $. Then the pullback~$ K_2(E_u) \to K_2(\Q(R)) $
factorises through the localisation~$ K_2(E_u) \to K_2( \OOO_{E_u, R} ) $. Because~$ E_u $
is regular, 
the natural map~$  K_2( \OOO_{E_u, R} ) \to \ker(T_R) $, with
the latter in~$ K_2(\Q(E_u)) $, is an isomorphism
(see (V.9.3.1), Theorem~V.9.6 and Conjecture~V.9.3 in\cite{Kbook}).

In order to make the pullback to~$ K_2(\Q(R)) $ explicit on an element of~$ \ker(T_R) $,
we first note that every such element can be written
as a sum of symbols~$ \symb f_1 f_2 $ with~$ f_1 $ and~$ f_2 $ regular at~$ R $
with non-zero values,
and a term~$ \symb {\pi} g $ with~$ \pi $ a uniformiser at~$ R $,
and $ g $ regular at~$ R $ with~$ g(R) = 1 $.
Using~$ \symb {\pi} g = \symb {\pi} (1-\pi)g $
if necessary, we may assume~$ g = 1 - \tilde g \pi  $ with~$ \tilde g(R) \ne 0 $.
Then~$ \symb {\pi} g = \symb (1-g)^{-1}{\pi} g = \{ g, \tilde g \} $,
in which both~$ g $ and~$ \tilde g $ are regular at~$ R $ with
non-zero values. 

We therefore only have to consider elements~$ \symb f_1 f_2 $ with~$ f_1 $
and~$ f_2 $ regular at~$ R $ with non-zero values.
Because $ K_1(\OOO_{E_u, R}) = \OOO_{E_u, R}^\times $
 (see \cite[Lemma~III.1.4]{Kbook}) and pullback is compatible with
cup products, the pullback of~$ \symb f_1 f_2 $ is~$ \symb f_1(R) f_2(R) $,
as was to be expected. As a useful tool, we observe that our calculation above
shows that if~$ f_1 $ and~$ f_2 $ are in~$ \Q(E_u)^\times $, and~$ f_2 $ is regular
at~$ R $ with~$ f_2(R) = 1 $, then this element pulls back to~0
in $ K_2(\Q(R)) $.

In particular, this gives an explicit pullback~$ i_R^* : K_2^T(E_u) \to K_2(\Q(R)) $
that induces the pullback~$ K_2(E_u) \to K_2(\Q(R)) $. We compute
it for the elements of~$ K_2^T(E_u) $ in Proposition~\ref{tameINT}(1), with~$ R $
equal to~$ P = (0,0) $ or~$ Q = (-u^2, 0) $.
From~$ h(P) = w(P) = 1 $ we obtain~$ i_P^*( \alpha) = 0 $.
Similarly, with~$ h(Q) = u+1 $ and~$ w(Q) = 1 $, we get~$ i_Q^*(\alpha) = \symb -1 1+u $.
The pullbacks of~$ \symb -1 x $ and~$\symb -1 x+1 $ are easily computed, using
\begin{equation*}
 \symb -1 x = \symb -1 (x+1)^{-1}(x+u^2)^{-1}y^2 = \symb -1 (x+1)^{-1}(x+u^2)^{-1} 
\end{equation*}
to mitigate the problem that $x(P)=0$.
All pullbacks are listed in Table~\ref{pullback-table}.

\begin{table}[t]
\caption{\label{pullback-table}Pullbacks of elements of~$ K_2^T(E_u) $ at some rational points.}
\begin{tabular}{|c|c|c|}
\hline
\vphantom{$ p_p{p_p} $}\vphantom{$ b^{b^b} $}
 & $ P = (0,0) $ & $ Q = (-u^2, 0) $ 
\\
\hline\vphantom{$ b^{b^b} $}
$ \symb -1 x $ & $ 0 $ & $ \symb -1 -1 $ 
\\
$ \symb -1 x+1 $ & $ 0 $ & $ \symb -1 1-u^2 $
\\
\vphantom{$ p_{p_{p_p}} $}
$ \alpha = \symb v w + \symb  -1 h $ & 0 & $ \symb -1 1+u $
\\
\hline
\end{tabular}
\end{table}

We now combine these pullback maps with the map~$ \ch_{\ell} $ in~\eqref{K2TregCD}
for~$ \ell = 2 $.
By the construction and functoriality of~$ \ch_2 $, 
and because the pullbacks factor through the surjection~$ K_2(E_u) \to K_2^T(E_u) $,
for every~$ m \ge 1 $ we have a commutative diagram
\begin{equation}
\begin{split}
\label{regCD}
\xymatrix{
K_2(E_u) \ar[r]^-{\mathrm{ch}_2} \ar[d] & \het^2(E_u, \Z_2(2)) \ar[r]^-{\pi_2} \ar[dd]^-{i_Q^* - i_P^*} & H^1(\Q, \het^1(E_{u, \Qbar}, \Z_2(2))) \ar@{.>}[ldd]
\\
K_2^T(E_u) \ar[d]^-{i_Q^* - i_P^*} & &
\\
K_2(\Q) \ar[d] \ar[r]^-{\mathrm{ch}_2} & \het^2(\Spec(\Q), \Z_2(2)) \ar[d]
\\
K_2(\Q)/2^m \ar[r]^-{\mathrm{ch}_{2,m}}_-{\simeq } & \het^2(\Spec(\Q),  \mu_{2^m}^{\otimes 2})
\,.
}
\end{split}
\end{equation}
Here~$ \pi_2 $ is as in~\eqref{K2TregCD},
and the map in the lowest row is an isomorphism by~\cite[Theorem~11.5]{MS}.
The first map in the second column annihilates the
first term in the exact sequence in Proposition~\ref{Het2sos}
because that was pulled back from~$ \Q $, so induces
the indicated map~$ H^1(\Q, \het^1(E_{u, \Qbar}, \Z_2(2))) \to \het^2(\Spec(\Q), \Z_2(2)) $.
Composing this map with~$ \het^2(\Spec(\Q), \Z_2(2)) \to \het^2(\Spec(\Q),  \mu_{2^m}^{\otimes 2}) \simeq K_2(\Q)/2^m $
gives a~map
\begin{equation} \label{psim-def}
\psi_m : H^1(\Q, \het^1(E_{u, \Qbar}, \Z_2(2))) \to K_2(\Q)/2^m
.
\end{equation}

For the last ingredient of our proofs, we recall the structure
of~$ K_2(\Q) $ (see \cite[III.6.5.1]{Kbook}). We have~$ K_2(\Q) \simeq \{\pm 1\} \times \oplus_p \F_p^\times $
where~$ p $ runs through the prime numbers, and the map is~$ T_\infty \times \prod_p T_p $,
with~$ T_p $ the tame symbol for~$ p $ defined by~$ T_p(\symb a b ) = (-1)^{v_p(a) v_p(b)} a^{v_p(b)} b^{-v_p(a)} $
modulo~$ p $ in~$ \F_p^\times $, and~$ T_\infty $ defined
by~$ T_\infty( \symb a b ) = -1 $ if~$ a, b < 0 $ and~1 otherwise.
Here the kernel of~$ \prod_{p<\infty} T_p $ is $ K_2(\Z) = \{ 0, \symb -1 -1 \} $,
which has order~2.

\begin{prop} \label{H1G-torsion}
Let~$ E_u $ be as in~\eqref{Eueq}, with~$ |u^2-1| $ not a square in~$ \Q^\times $.

(1)
Then~$ H^1(\Q, \het^1(E_{u, \Qbar}, \Z_2(2)))_\tor $ is non-cyclic of
order~4.

(2)
Assume that also~$ 2 |u^2-1| $ is not a square in~$ \Q^\times $.
If we lift the elements~$ \symb -1 x $ and~$ \symb -1 x+1 $
of Proposition~\ref{tameINT}(1)
to~$ K_2(E_u) $, then the images~$ t_1 $ and~$ t_2 $ of these lifts under the composition
\begin{equation*}
K_2(E_u) \buildrel{\ch_2} \over{\longrightarrow} \het^2(E_u, \Z_2(2)) \buildrel{\pi_2}\over{\longrightarrow} H^1(\Q, \het^1(E_{u, \Qbar} , \Z_2(2) ) ) 
,
\end{equation*}
with~$ \ch_2 $ and~$ \pi_2 $ as in~\eqref{regCD},
are independent of the chosen lifts, and form an~$ \F_2 $-basis
of~$ H^1(\Q, \het^1(E_{u,\Qbar}, \Z_2(2)))_\tor $.
Moreover, these images~$ t_1 $ and~$ t_2 $ are mapped by~$ \psi_m $ to the classes of~$ \symb -1 -1 $ and~$ \symb -1 1-u^2 $
in~$ K_2(\Q) / 2^m $, for any~$ m \ge 1 $.
\end{prop}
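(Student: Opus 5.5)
For part~(1) I will use the long exact sequence of continuous $G_\Q$-cohomology attached to $0\to T(2)\to V(2)\to (V/T)(2)\to 0$, with $T=\het^1(E_{u,\Qbar},\Z_2)$. Here $(V/T)(2)\simeq E_u[2^\infty](1)$, as in Section~\ref{BlKa}. Since $H^0(\Q,V(2))=0$ (for instance by weights, or because Frobenius at a good prime has no eigenvalue $1$ after the twist), the sequence identifies $H^1(\Q,T(2))_\tor$ with $H^0(\Q,E_u[2^\infty](1))$. By Proposition~\ref{2tors}, which needs only that $|u^2-1|$ is not a square, this group is non-cyclic of order~4. This gives~(1) directly. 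It also shows that every torsion class is $2$-torsion and comes from $E_u[2](1)=E_u[2]$ via the connecting map.

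For part~(2), independence of the lift is immediate. Two lifts differ by the image of $\oplus_P K_2(\Q(P))$, and this maps into torsion of $H^1(\Q,T(2))$ only after pushing forward from points. The cleaner argument is that $\reg_2$ in \eqref{K2TregCD} is defined on $K_2^T(E_u)$, and I will check that $\pi_2\circ\ch_2$ itself kills that image. The point is that classes pushed forward from closed points land in $\Fil^2$ or have trivial image, since the Gysin map from a point sends $\het^0$ of the point into $H^2$ of the base field part and $H^1(\Q(P),\Z_2(1))$-type contributions into $\Fil^1/\Fil^2$. This is the step I am least sure of, and the first thing I would try is to compute it via the Hochschild--Serre compatibility of the Gysin map. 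The elements $\{-1,x\}$ and $\{-1,x+1\}$ are $2$-torsion in $K_2(\Q(E_u))$, and their lifts can be chosen $2$-torsion up to the ambiguity. Consequently $t_1,t_2$ lie in the torsion subgroup, because a multiple of each comes from a point and hence vanishes by the above.

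Next I compute $\psi_m(t_i)$ using the commutative diagram \eqref{regCD} and Table~\ref{pullback-table}. The composite $(i_Q^*-i_P^*)$ gives $\{-1,-1\}$ for $\{-1,x\}$ and $\{-1,1-u^2\}$ for $\{-1,x+1\}$, so $\psi_m(t_1)$ and $\psi_m(t_2)$ are their classes in $K_2(\Q)/2^m$. It remains to show these two classes are $\F_2$-independent in $K_2(\Q)/2$, which then forces $t_1,t_2$ to be independent and hence a basis by part~(1). I will use the structure of $K_2(\Q)$ recalled above. The element $\{-1,-1\}$ is the nontrivial element of $K_2(\Z)$, detected by $T_\infty$. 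For $\{-1,1-u^2\}$, its tame symbols at the odd primes $p$ are $(-1)^{\ord_p(u^2-1)}$. Its image in $K_2(\Q)/2$ is zero or equal to $\{-1,-1\}$ only if all $\ord_p(u^2-1)$ for odd $p$ are even, i.e.\ only if $|u^2-1|$ or $2|u^2-1|$ is a square. Both cases are excluded by hypothesis.

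A residual point is that an element of $\oplus_p\F_p^\times$ of the form $-1$ is nonzero modulo $2$ exactly when $p\equiv 3\pmod 4$, while for $p\equiv1\pmod4$ the class of $-1$ is a square in $\F_p^\times$. So the independence has to be checked more carefully in $K_2(\Q)/2^m$ for large $m$, or by using $T_2$-type information via Hilbert symbols at~$2$ (through the isomorphism $K_2(\Q)/2\simeq \mathrm{Br}_2$-data of Merkur'ev--Suslin, \cite[Theorem~11.5]{MS}). I would handle this with the Hilbert symbols $(-1,1-u^2)_p$. These are nontrivial exactly at $p\equiv3\pmod4$ with odd valuation, together with possibly $2$ and $\infty$. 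This reduces the claim to a statement about $|u^2-1|$, and this step is where the precise hypothesis on $2|u^2-1|$ should enter.
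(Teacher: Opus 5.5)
Your part~(1) is correct and matches the paper, which uses multiplication by $2^m$ on $\het^1(E_{u,\Qbar},\Z_2(2))$ instead of the sequence with $V(2)$; that is the same computation. The gap is in the independence step of part~(2). You reduce to $\F_2$-independence of the classes of $\{-1,-1\}$ and $\{-1,1-u^2\}$ in $K_2(\Q)/2$, but under the stated hypotheses this independence is false in general. Your tame-symbol argument overlooks that in $K_2(\Q)\simeq\{\pm1\}\times\oplus_p\F_p^\times$ a tame symbol $-1$ at a prime $p\equiv1\pmod 4$ is a square. Concretely, take $u=9$. Then $|u^2-1|=80$ and $160$ are not squares. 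But $\{-1,-80\}-\{-1,-1\}=\{-1,80\}$ has trivial $T_\infty$, and its only non-trivial tame symbol is $T_5=-1=2^2$ in $\F_5^\times$. So it lies in $2K_2(\Q)$, and the two classes coincide modulo~$2$.

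The Hilbert-symbol route you settle on is just $K_2(\Q)/2\simeq{}_2\mathrm{Br}(\Q)$ again, so it fails for the same $u$. It works exactly when some prime $p\equiv3\pmod4$ has odd order in $u^2-1$, which is not assumed, and no use of the hypothesis on $2|u^2-1|$ can rescue it. What the two non-square hypotheses actually give is an odd prime $p$ with $\ord_p(u^2-1)$ odd, so $T_p(\{-1,1-u^2\})=-1$ while $T_p(\{-1,-1\})=1$. The missing idea is to work in $K_2(\Q)/2^m$ with $m\ge\ord_2(p-1)$. There $-1$ is not a $2^m$-th power in $\F_p^\times$, and $T_\infty$ still detects $\{-1,-1\}$. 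Then $\psi_m(t_1),\psi_m(t_2)$ generate a group of order~$4$, so by~(1) $t_1,t_2$ form a basis and $\psi_m$ is injective on the torsion subgroup. You mention ``large $m$'' in passing, but this is precisely the step that must be carried out, and it is what the paper does.

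Your lift-independence argument rests on an unproved claim that $\pi_2\circ\ch_2$ kills push-forwards from closed points, via a Gysin computation you have not done. You do not need it. Two lifts differ by a torsion element $\delta\in K_2(E_u)$ with trivial image in $K_2^T(E_u)$, so $\pi_2\ch_2(\delta)$ is torsion. Its image under $\psi_m$ is the class of $(i_Q^*-i_P^*)(\delta)=0$, because the pullbacks factor through $K_2^T(E_u)$. Injectivity of $\psi_m$ on torsion, for $m$ as above, then gives $\pi_2\ch_2(\delta)=0$. Likewise, $t_1,t_2$ are torsion simply because the lifts are torsion: twice a lift lies in the torsion kernel of $K_2(E_u)\to K_2^T(E_u)$. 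No Gysin argument is needed there either.
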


\begin{proof}
(1)
For each $m>0$ we have an exact sequence 
$$0\rightarrow \het^1(E_{u,\Qbar}, \Z_2(2))\xrightarrow{2^m} \het^1(E_{u,\Qbar}, \Z_2(2))\rightarrow \het^1(E_{u,\Qbar}, (\Z/2^m\Z)(2))\rightarrow 0.$$
From the resulting long exact sequence in Galois cohomology, and using the fact that $H^0(\Q, \het^1(E_{u,\Qbar}, \Z_2(2)))$ is trivial, we see that~$H^1(\Q,\het^1(E_{u,\Qbar}, \Z_2(2)))[2^m] $ is isomorphic to $H^0(\Q, \het^1(E_{u,\Qbar}, (\Z/2^m\Z)(2)))\simeq H^0(\Q, E_u[2^m](1))$,
where we used the natural isomorphism~$ \het^1(E_{u,\Qbar}, \mu_{2^m} ) \simeq E_u(\Qbar)[2^m] $
of \cite[Corollary~III.4.18]{Mil80}.
The statement now follows from Proposition~\ref{2tors}.

(2)
With~$ P = (0,0) $ and~$ Q = (-u^2, 0) $ as before, using
Table~\ref{pullback-table}, in~\eqref{regCD} we find~$ (i_Q^* - i_P^*) (\symb -1 x ) = \symb -1 -1 $
and~$ (i_Q^* - i_P^*) (\symb -1 x+1 ) = \symb -1 1-u^2 $ in~$ K_2(\Q) $.
These two elements generate a non-cyclic subgroup of order~4 because~$ T_\infty(\symb -1 -1 ) = -1 $,
$ T_p(\symb -1 -1 ) = 1 $, and $ T_p(\symb -1 1-u^2 ) = -1 $
for an odd prime number~$ p $ with~$ \ord_p(u^2-1) $
odd, which exists by our assumptions.

For every~$ m \ge 1 $, the class of $ \symb -1 -1 $ in $ K_2(\Q) / 2^m $
is non-trivial. The classes of $ \symb -1 -1 $ and~$ \symb -1 1-u^2 $
generate a non-cyclic group of order~4 in $ K_2(\Q) / 2^m $ for
any~$ m \ge \ord_2(p-1) $ with~$ p $ as before,
because~$ -1 $ is not a~$ 2^m $th power in~$ \F_p^\times $.

Now let~$ \symb -1 x ^\sim $ and~$ \symb -1 x+1 ^\sim $ in~$ K_2(E_u) $
lift~$ \symb -1 x $ and~$ \symb -1 x+1 $, respectively.
As noted around~\eqref{K2locseq}, the kernel of~$ K_2(E_u) \to K_2^T(E_u) $ is torsion
so these lifts are in~$ K_2(E_u)_{\tor} $, and have
images~$ t_1 $ and~$ t_2 $ in~$ H^1(\Q, \het^1(E_{u, \Qbar}, \Z_2(2)))_{\tor} $ under~$ \pi_2 \circ \ch_2 $.
Applying the map~$ \psi_m $ of~\eqref{psim-def}, with~$ m $ as before, to~$ t_1 $
and~$ t_2 $, we see from~(1), the commutativity
of~\eqref{regCD}, and our calculations above, that~$ t_1 $ and~$ t_2 $
form an~$ \F_2 $-basis of~$ H^1(\Q, \het^1(E_{u, \Qbar}, \Z_2(2)))_{\tor} $,
and that~$ \psi_m $ induces an isomorphism between this subgroup
and the subgroup of~$ K_2(\Q) / 2^m $ generated by the classes of~$ \symb -1 -1 $ and $ \symb -1 1-u^2 $.
In particular, the images~$ t_1 $ and~$ t_2 $
are independent of the choice of the lifts.
\end{proof}

The next result is the main goal of this section. It concerns certain
indivisibilities in $ K $-groups or in the target of the map~$ \reg_2 $ in~\eqref{K2TregCD}.

\begin{thm} \label{new-index-theorem}
Let~$ E_u $ and~$ \alpha $ be as in Proposition~\ref{tameINT}(1),
but assume~$ \ord_p( 1-u ) $ is odd for some odd prime number~$ p $,
and similarly for~$ 1+u $ and~$ 1-u^2 $ in place of~$ 1-u $ (with
possibly different~$ p $). Then the following hold.

(1)
If~$ m \ge \ord_2(p-1) $ for some odd prime number~$ p $ with~$ \ord_p(u-1) $
odd, and also~$ m \ge \ord_2(q-1) $ for some odd prime number~$ q $ with~$ \ord_q(u+1) $
odd, then~$ \reg_2(\alpha) $ is not divisible by~$ 2^m $
in~$ H^1(\Q, \het^1(E_{u, \Qbar} , \Z_2(2)))_\tf $,
and the class of~$ \alpha $ in~$ K_2^T(E_u)_\tf $ cannot be divided by~$ 2^m $.

(2)
Assume also that~$  4 u  $ is an integer, so that~$ 2 \alpha $
is in~$ \INT E_u $ by Proposition~\ref{tameINT}(2).
Then~$ 2 \alpha $ is not in~$ 2 \INT E_u + \INT E_u {}_{, \tor} \subseteq \INT E_u $.
\end{thm}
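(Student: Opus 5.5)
The plan is to detect non-divisibility after pulling back to the two rational points $P=(0,0)$ and $Q=(-u^2,0)$, working inside $K_2(\Q)/2^m$, whose structure is known explicitly through tame symbols. The two tools are the commutative diagram~\eqref{regCD} and the map $\psi_m$ of~\eqref{psim-def}. First I check the hypotheses of Proposition~\ref{H1G-torsion}. Since $\ord_p(1-u^2)$ is odd for some odd $p$, neither $|u^2-1|$ nor $2|u^2-1|$ is a square. Hence $H^1(\Q,\het^1(E_{u,\Qbar},\Z_2(2)))_\tor$ is spanned by $t_1,t_2$, and $\psi_m(t_1)$, $\psi_m(t_2)$ are the classes of $\symb -1 -1 $ and $\symb -1 1-u^2 $.

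\emph{Part (1).} Suppose $\reg_2(\alpha)$ is divisible by $2^m$ in the torsion-free quotient. Lift $\alpha$ to $\tilde\alpha\in K_2(E_u)$. Then $\pi_2\ch_2(\tilde\alpha)=2^m z+t$ with $t$ in the torsion subgroup, so $t=a t_1+b t_2$ with $a,b\in\{0,1\}$. Apply $\psi_m$. By commutativity of~\eqref{regCD} and Table~\ref{pullback-table}, the left-hand side becomes the class of $(i_Q^*-i_P^*)(\alpha)=\symb -1 1+u $. The right-hand side becomes $a\symb -1 -1 +b\symb -1 1-u^2 $ modulo $2^m$, since $\psi_m(2^m z)=0$. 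I then separate these classes using tame symbols, with $p$ and $q$ the primes in the statement.
\begin{itemize}
\item At $q$, where $\ord_q(u+1)$ is odd and $q\nmid u-1$: $T_q$ sends $\symb -1 1+u $ and $\symb -1 1-u^2 $ to $-1$, and $\symb -1 -1 $ to $1$.
\item At $p$, where $\ord_p(u-1)$ is odd and $p\nmid u+1$: $T_p(\symb -1 1+u )=1$, while $T_p(\symb -1 1-u^2 )=-1$.
\end{itemize}
Because $m\ge\ord_2(p-1)$ and $m\ge\ord_2(q-1)$, the element $-1$ is not a $2^m$th power in $\F_p^\times$ or in $\F_q^\times$. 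So $T_p$ and $T_q$ induce well-defined, nontrivial invariants on $K_2(\Q)/2^m$. Comparing at $q$ forces $b=1$, and comparing at $p$ forces $b=0$, a contradiction. The $K$-theoretic statement follows at once. If $\alpha=2^m\beta+\tau$ with $\tau$ torsion, then applying $\reg_2$ kills $\tau$ in the torsion-free target, so $\reg_2(\alpha)=2^m\reg_2(\beta)$, which we have just excluded.

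\emph{Part (2).} Suppose $2\alpha=2\beta+\tau$ with $\beta\in\INT E_u$ and $\tau$ torsion. Then $\theta:=\alpha-\beta$ is torsion in $K_2^T(E_u)$. Apply $i_Q^*-i_P^*$ and reduce modulo $2^m$, for $m$ as in part~(1), to get three contributions.
\begin{itemize}
\item For $\alpha$: the contribution is $\symb -1 1+u $, from Table~\ref{pullback-table}.
\item For $\theta$: lift $\theta$ to $K_2(E_u)$. Its image under $\pi_2\ch_2$ is torsion, since it is torsion. By~\eqref{regCD} and Proposition~\ref{H1G-torsion}, its image in $K_2(\Q)/2^m$ therefore lies in the span of $\symb -1 -1 $ and $\symb -1 1-u^2 $.
\item For $\beta$: since $\beta$ comes from $K_2(\EE)$ for a regular proper model $\EE$, and $P$, $Q$ extend to sections $\Spec\Z\to\EE$, the pullbacks factor through $K_2(\Z)=\{0,\symb -1 -1 \}$. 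Hence $(i_Q^*-i_P^*)(\beta)$ lies in $K_2(\Z)$, which is already in that span.
\end{itemize}
It follows that $\symb -1 1+u $ lies in the span of $\symb -1 -1 $ and $\symb -1 1-u^2 $ modulo $2^m$, contradicting the tame-symbol computation of part~(1).

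The main obstacle is the integrality step for $\beta$. I must justify that the explicit pullback $i_R^*$ on $\ker(T_R)$, evaluated on an element coming from $K_2(\EE)$, agrees with the pullback along the integral section. Then every $T_\ell$ with $\ell$ finite is trivial on $(i_Q^*-i_P^*)(\beta)$. I would first try functoriality of $K_2$ for the regular scheme $\EE$ along $\Spec\Z\to\EE$, compatible with localisation to generic fibres. Failing that, I would argue directly that $T_\ell$ of the pullback equals a tame symbol along the corresponding curve in $\EE$, which vanishes by the definition of $\INT E_u$.
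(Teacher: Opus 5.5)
Your argument is correct and is essentially the paper's proof. It uses the same pullback to $P$ and $Q$ through \eqref{regCD} and $\psi_m$, Proposition~\ref{H1G-torsion} to control torsion, and tame symbols at $p$ and $q$. In (2) it uses the torsion element $\alpha-\beta$ and the factorisation of $i_P^*(\beta)$, $i_Q^*(\beta)$ through $K_2(\Z)$ via the sections of the regular model, which the paper simply asserts using the functoriality you suggest.

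One slip concerns part (1), where $u$ is only rational. There a chosen odd prime may divide the denominator of $u$, and then $\ord_p(u-1)=\ord_p(u+1)$ is odd and $T_p(\{-1,1-u^2\})=1$, contrary to your stated values. The comparison at that single prime then already gives $-1\equiv 1$ modulo $2^m$th powers, so the contradiction survives. The paper avoids this case split by using $T_\infty$ to reduce to $\{-1,|1-u|\}$ or $\{-1,|1+u|\}$ lying in $2^mK_2(\Q)$.
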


\begin{cor} \label{new-index-cor}
Suppose that in Theorem~\ref{new-index-theorem} the conditions are satisfied
for a positive integer~$ u $ that is divisible by~4.
Let~$ m_u $ be the minimal value
of~$ \ord_2(q-1) $ where~$ q $ runs through the prime divisors of~$ u+1 $.
Then~$ \reg_2(\alpha) $ is not divisible by~$ 2^{m_u} $ in~$ H^1(\Q, \het^1(E_{u, \Qbar} , \Z_2(2)))_\tf $,
hence the class of~$ \alpha $ is not divisible by~$ 2^{m_u} $ in~$ K_2^T(E_u)_\tf $.
\end{cor}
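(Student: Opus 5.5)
The plan is to deduce the corollary directly from Theorem~\ref{new-index-theorem}(1) with $ m = m_u $. So I must exhibit an odd prime $ p $ with $ \ord_p(u-1) $ odd and $ \ord_2(p-1) \le m_u $, and an odd prime $ q $ with $ \ord_q(u+1) $ odd and $ \ord_2(q-1) \le m_u $. Once both are found, both conclusions (non-divisibility of $ \reg_2(\alpha) $ by $ 2^{m_u} $ in $ H^1(\Q, \het^1(E_{u,\Qbar},\Z_2(2)))_\tf $, and of the class of $ \alpha $ in $ K_2^T(E_u)_\tf $) are exactly the statement of part~(1). Note that $ m_u \ge 1 $, since every prime divisor of the odd number $ u+1 $ is odd.

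\emph{The prime $ p $.} Since $ 4 \mid u $ and $ u>0 $, we have $ u-1 \equiv 3 $ modulo~4. Write $ u-1 = \prod p^{e_p} $. Then $ u-1 \equiv (-1)^{\sum_{p \equiv 3 (4)} e_p} $ modulo~4, so $ \sum_{p\equiv 3 (4)} e_p $ is odd. Hence some prime $ p \equiv 3 $ modulo~4 divides $ u-1 $ to an odd power. For this $ p $ we have $ \ord_2(p-1) = 1 \le m_u $, so the condition on $ u-1 $ holds for every admissible $ m $. This is the easy half.

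\emph{The prime $ q $.} I would take $ q $ to be a prime divisor of $ u+1 $ realising the minimum $ m_u = \ord_2(q-1) $, and check that $ \ord_q(u+1) $ is odd. This is the step I expect to be the obstacle. The hypotheses of Theorem~\ref{new-index-theorem} only guarantee \emph{some} prime with odd exponent in $ u+1 $, not necessarily one of minimal $ \ord_2(q-1) $. Indeed, a hypothetical $ u+1 = 5^2\cdot 17 $ would give $ m_u = 2 $, while the only odd-exponent prime is $ 17 $, with $ \ord_2(17-1) = 4 $.

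My first move is to see whether the proof of Theorem~\ref{new-index-theorem}(1) actually uses only the nontriviality in $ K_2(\Q)/2^m $ of the pullbacks $ (i_Q^*-i_P^*)(\alpha + t) $ for torsion $ t $. By Table~\ref{pullback-table} and Proposition~\ref{H1G-torsion}(2), these pullbacks are $ \symb -1 {\pm(1+u)} $ and $ \symb -1 {\pm(u-1)} $. Applying the tame symbols $ T_q $, the class of $ \symb -1 {1+u} $ is nonzero modulo $ 2^m $ exactly when some $ q $ has $ \ord_q(1+u) $ odd and $ \ord_2(q-1) \le m $. So the even-exponent case genuinely cannot be handled by this method, and the odd-exponent requirement cannot be dropped.

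I would therefore interpret (and, if necessary, state) $ m_u $ with $ q $ running over primes dividing $ u+1 $ to an odd power. Under the standing assumptions of Theorem~1 of the introduction, where every odd prime has $ \ord_q(u(u^2-1)) $ zero or odd, this coincides with the stated minimum over all prime divisors, and the corollary follows immediately from Theorem~\ref{new-index-theorem}(1). In that setting $ m_u = 1 $ precisely when $ u+1 $ has a prime factor congruent to~3 modulo~4, matching the definition of $ m_u $ in the introduction.
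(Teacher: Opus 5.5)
Your argument is the paper's: apply Theorem~\ref{new-index-theorem}(1) with $m=m_u$, and get a prime $p\equiv 3$ modulo~4 with $\ord_p(u-1)$ odd and $\ord_2(p-1)=1$ from $u-1\equiv 3$ modulo~4, exactly as in the one-line proof given there. Your caveat about $q$ is also correct, and the paper's proof, which addresses only $p$, silently assumes it away: for instance $u=44$ (with $u-1=43$, $u+1=3^2\cdot 5$) satisfies all hypotheses of the theorem and has $m_u=1$ as literally defined, but $\{-1,45\}=2\{2,5\}$ in $K_2(\Q)$, so the method only yields non-divisibility by~$4$; restricting $q$ to primes with $\ord_q(u+1)$ odd is the right fix, and it is automatic under the hypotheses of Theorem~1 and of Section~\ref{2-BK}, where the corollary is actually used.
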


The corollary follows immediately from Theorem~\ref{new-index-theorem}(1)
by noting that there must be a prime number~$ p \equiv 3$ modulo~4
with~$ \ord_p(u-1) $ odd, and~$\ord_2(p-1) = 1$.

\begin{remar} \label{m=1or2}
If~$ u $ is a positive integer congruent to~4 modulo~8,
then~$  u- 1 $, $ u+1 $ and~$ u^2-1 $ are not congruent
to~1 modulo~8 so they cannot be squares, and
the conditions of the theorem and corollary are always satisfied.
Then~$ m_u = 1 $ or~2 because~$ u+1 $ is not a product of prime numbers that
are congruent to~1 modulo~8.
\end{remar}

\begin{proof}[Proof of Theorem~\ref{new-index-theorem}]
(1)
Suppose~$ \reg_2(\alpha) $ is divisible by~$ 2^m $ for some~$ m \ge 1 $.
From~\eqref{K2TregCD} we then have in~\eqref{regCD}, with~$ \tilde\alpha $ any lift of~$ \alpha $ to~$ K_2(E_u) $,
that~$ \pi_2 \circ \ch_2(\tilde\alpha) = 2^m s + t $ for some~$ s $ and~$ t $
in~$ H^1(\Q, \het^1(E_{u, \Qbar} , \Z_2(2))) $ with~$ t $ torsion.
Applying~$ \psi_m $ as in~\eqref{psim-def}, and using Proposition~\ref{H1G-torsion}(2)
together with Table~\ref{pullback-table}, we see from the commutativity
of~\eqref{regCD} that~$ \symb -1 1+u $ is in~$ \langle \symb -1 -1 , \symb -1 1-u^2 \rangle + 2^m K_2(\Q) $
inside~$ K_2(\Q) $.
Equivalently (as one sees by applying $T_{\infty}$),~$ \symb -1 |1+u| $ is
in $ \langle  \symb -1 |1-u^2| \rangle + 2^m K_2(\Q) $,
so that~$ \symb -1 |1-u| $ or~$ \symb -1 |1+u| $ is in~$ 2^m K_2(\Q) $.
Now take~$ m $ as in the statement of the theorem.
Then~$ T_p( \symb -1 |1-u| ) = -1 $ in~$ \F_p^\times $ is not a~$ 2^m $th
power, nor is~$ T_q( \symb -1 |1+u| ) = -1 $ in~$ \F_q^\times $, so both are impossible.
That the class of~$ \alpha $ in~$ K_2^T(E_u)_\tf $ is not divisible by~$ 2^m $
now follows by applying~$ \reg_2 $.

(2)
Suppose that $ 2 \alpha = 2 \beta +  \gamma $ with~$ \beta $
and~$ \gamma $ in~$ \INT E_u $ and~$ \gamma $ torsion. Then~$ \gamma $
is 2-divisible in~$ K_2^T(E_u) $, so in that group we have~$ \alpha = \beta + \gamma' $
with~$ \gamma' $ in~$ K_2^T(E_u)_{\tor} $.
For any~$ m \ge 1 $, in~\eqref{regCD} we know from Table~\ref{pullback-table} that
the image of~$ \alpha $ in~$ K_2(\Q) / 2^m $ is the class of~$ \symb -1 1+u $.
On the other hand, as we explained at the beginning of this section,
$ \beta $ comes from an element of
$ K_2 $ of a regular model over~$ \Z $ of~$ E_u $, so $ i_P^*(\beta) $
and~$ i_Q^* (\beta) $ are in~$ K_2(\Z) = \{ 0, \symb -1 -1 \} \subseteq K_2(\Q) $
because they come from the sections of the model corresponding
to~$ P $ and~$ Q $.

By the commutativity of~\eqref{regCD}, we can compute the class of~$ (i_P^*-i_Q^*)(\gamma') $ in~$ K_2(\Q) / 2^m $ by first lifting~$ \gamma' $
to~$ \tilde\gamma' $ in~$ K_2(E_u) $, and then computing the image of~$ \tilde\gamma' $ under~$ \psi_m \circ \pi_2 \circ \ch_2 $.
Here~$ \tilde\gamma' $ is a torsion element because
the kernel of the map~$ K_2(E_u) \to K_2^T(E_u) $ is torsion,
as noted around~\eqref{K2locseq}, so
we see from Proposition~\ref{H1G-torsion}(2) and Table~\ref{pullback-table}
that its image lies in the subgroup of~$ K_2(\Q) / 2^m $ generated by the classes of~$ \symb -1 -1 $
and~$ \symb -1 1-u^2 $.
We then have in~$ K_2(\Q) $ that~$ \symb -1 1+u $ 
is in $ \langle \symb -1 -1 , \symb -1 1-u^2 \rangle + 2^m K_2(\Q) $,
which, as in the proof of~(1), is equivalent to~$ \symb -1 |1+u| $
or~$ \symb -1 |1-u| $ being in~$ 2^m K_2(\Q) $.
Both are impossible as in the proof of~(1), because there must
be an odd prime number~$ p $ with~$ \ord_p(1-u) $ odd,
and an odd prime number~$ q $ with~$ \ord_p(1+u) $ odd,
and for~$ m $ sufficiently large, $ T_p( \symb -1 |1-u|) = -1 $
is not a~$ 2^m $th power in~$ \F_p^\times $, nor is~$ T_q( \symb -1 |1+u|) = -1 $
a~$ 2^m $th power in~$ \F_q^\times $.
\end{proof}

\begin{remar}
(1) In particular, in Proposition~\ref{new-index-theorem}(1)~$ \reg_2(\alpha) $ is non-trivial.

(2)
Clearly~$ 2\alpha $ is always divisible by~2 in $ K_2^T(E_u) $ but its class
in~$ \INT E_u {}_{,\tf} $ is not divisible by~2 in that group under
the conditions of Proposition~\ref{new-index-theorem}(2).
This difference does not appear for
more classical $ K $-groups; for example, the notions of 2-divisibility in the units of
the ring of integers of a number field, and in the units of the number field itself, coincide.
\end{remar}

\section{A hypergeometric formula for the regulator} \label{hyper-sec}

Boyd \cite{Bo} investigated numerically the relations between Mahler measures of $2$-variable Laurent polynomials associated with reflexive lattice polygons, and leading terms at $s=0$ of derivatives of associated $L$-functions. His work was revisited by Rodriguez Villegas \cite{V}, in the context of $K_2$ of elliptic curves and Beilinson's conjecture.  One of Boyd's examples \cite[\S 9, Example (b)]{V} is the family of Laurent polynomials $X+Y+X^{-1}+Y^{-1}-k$.
For~$ k $ in~$ \Q $ with~$ k \ne 0, \pm 4 $ this (set equal to $0$) defines the function field of an elliptic curve $C_k$
over~$ \Q $.
We now relate it to the family of elliptic curves in~\eqref{Eueq}.

\begin{prop} \label{boyd-isogeny}
On the curve $E_u$ as in~\eqref{Eueq}, let~$ X = -v w $ and $ Y = v / w $, where~$ v = V/Z $
and~$ w = W/Z $ are as around~\eqref{VWZeq}.

(1)
Then~$X+Y+X^{-1}+Y^{-1}=4u$, the subfield~$ \Q(X,Y) \subseteq \Q(E_u) $
is the subfield of~$ \Q(E_u) = \Q(v,w) $ invariant under translation by~$ (-u^2, 0) $,
corresponding to an isogeny~$ \phi : E_u \to C_{4u} $ with kernel generated
by~$ (-u^2, 0) $.

(2)
We have that~$ \symb X Y $ is in~$ K_2^T(C_{4u}) $, and under~$ \phi $
pulls back to~$ - 2 \alpha $ in~$ K_2^T(E_u) $.
\end{prop}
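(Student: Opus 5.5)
For part (1) I will work directly with the equation \eqref{VWZeq}. Dividing $(v^2-1)(w^2-1)+4uvw=0$ by $vw$ gives
\[
(v - v^{-1})(w-w^{-1}) = -4u.
\]
Substituting $X=-vw$ and $Y=v/w$ yields
\[
X+X^{-1}+Y+Y^{-1} = -vw - \frac{1}{vw} + \frac{v}{w} + \frac{w}{v} = -(v-v^{-1})(w-w^{-1}) = 4u.
\]
So $\Q(X,Y)\subseteq \Q(E_u)$.

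Next I will compute the degree. We have $v^2=-XY$ and $w^2=-X/Y$, so $v^2$ and $w^2$ lie in $\Q(X,Y)$. Also $v$ has the degree-$2$ divisor $[(-u^2,0)]-[(0,0)]-[(-1,0)]+[O]$, so $[\Q(E_u):\Q(v)]=2$, while $C_{4u}$ has degree $2$ over $\Q(Y)$. Since $\Q(E_u)=\Q(X,Y)(v)$ with $v^2\in\Q(X,Y)$, the extension has degree at most $2$. It is exactly $2$ provided $X$ and $Y$ do not generate $\Q(E_u)$, and this will follow once I exhibit a nontrivial automorphism fixing them.

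To identify that automorphism, I will show that translation $\tau$ by $T=(-u^2,0)$ sends $v\mapsto -v$ and $w\mapsto -w$, which fixes both $X$ and $Y$. For $v$: its divisor is $[T]-[(0,0)]-[(-1,0)]+[O]$, and translating by $T$ (using $T+(0,0)=(-1,0)$) permutes $T\leftrightarrow O$ and $(0,0)\leftrightarrow(-1,0)$. Hence $\tau^*v=cv$ with $c^2=1$; evaluating at one point, or noting $\tau^*v\cdot v$ must have the right leading term, gives $c=-1$. For $w$: the zeros and poles are the four points of order $4$ in the first coset. Translation by $T$ maps $(u,u(u+1))$ to the point with $x$-coordinate $-u$ (a chord computation), and I expect it to swap zeros with zeros and poles with poles, giving $\tau^*w=c'w$. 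Evaluating at $O$ and at $T$, where $w(O)=-1$ and $w(T)=1$, then gives $c'=-1$.

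Thus $\Q(X,Y)$ is the fixed field of $\langle\tau\rangle$, and the corresponding map $\phi:E_u\to C_{4u}$ (normalising $C_{4u}$ as an elliptic curve with origin $\phi(O)$) is a $2$-isogeny with kernel $\langle T\rangle$. The main obstacle I anticipate is keeping the signs in the translation formula for $w$ straight. If the direct approach gets messy, I will instead verify $\tau^*w=-w$ through the explicit addition formula applied to $w=\frac{u(x+1)-y}{u(x+1)+y}$.

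For part (2), the divisors of $X$ and $Y$ are supported on torsion points of $C_{4u}$. The cleanest route is to use $\phi^*\{X,Y\}=\{-vw,v/w\}$ directly:
\[
\{-vw, v w^{-1}\} = \{-1,v\}-\{-1,w\}+\{v,v\}-\{v,w\}+\{w,v\}-\{w,w\} = -2\{v,w\}+\{-1,v/w\}+\{v,-1\}+\{w,-1\},
\]
using $\{f,f\}=\{f,-1\}$. Simplifying, this becomes $-2\{v,w\}+\{-1,w\}^{\pm}\dots$, and I expect it to reduce to $-2\{v,w\}+\{-1,(\text{square class})\}$. I will then compare with $-2\alpha=-2\{v,w\}-2\{-1,h\}=-2\{v,w\}$, since $2\{-1,h\}=0$. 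This means I need $\{-1,v\}+\{-1,v\}$-type terms to cancel mod $2$, and indeed the remaining terms are $2\{-1,v\}$ and $0$, since $\{-1,v/w\}+\{-1,v\}+\{-1,w\}=2\{-1,v\}=0$.

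So $\phi^*\{X,Y\}=-2\alpha$, with $2\{-1,\cdot\}=\{1,\cdot\}=0$. Finally, $\{X,Y\}\in K_2^T(C_{4u})$ follows because $\phi^*$ is injective on $K_2(\Q(C_{4u}))$ modulo $2$-torsion and is compatible with tame symbols via norms. Alternatively, and more directly, I will check the tame symbols at the four points at infinity of the plane model $X+X^{-1}+Y+Y^{-1}=4u$, where $(X,Y)\to(0,\text{root}),\dots$. Each tame symbol is $\pm$ a value forced to $\pm1$ by the equation, and these signs cancel in the standard way known for Boyd's family.
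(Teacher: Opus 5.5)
Your part (1) is the paper's argument. The divisors of $v$ and $w$ are stable under translation $\tau$ by $T=(-u^2,0)$, so $\tau^*v=\pm v$ and $\tau^*w=\pm w$, and $[\Q(v,w):\Q(X,Y)]\le 2$ because $v^2=-XY$ and $w=-X/v$.

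The only difference is how the signs are fixed. You pin down each sign by evaluation (e.g.\ $w(O)=-1$, $w(T)=1$ forces $\tau^*w=-w$). The paper only needs the two signs to agree, which is immediate from $(v-v^{-1})(w-w^{-1})=-4u$ and $u\neq 0$. Non-triviality of $\tau$ then forces both to be $-1$, so no sign computation is needed at all. You do still need $\tau$ to send zeros of $w$ to zeros, e.g.\ $(u,u^2+u)+T=(-u,u-u^2)$, and that is correct. Your symbol manipulation in (2) is also correct and gives $\phi^*\{X,Y\}=-2\{v,w\}=-2\alpha$, as in the paper.

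The step that fails is your first justification that $\{X,Y\}\in K_2^T(C_{4u})$. Norm compatibility of tame symbols with $\phi_*$, together with $\phi_*\phi^*=2$, only gives
$T_Q(\{X,Y\})^2=\prod_{P\mapsto Q}\Nm_{\Q(P)/\Q(Q)}(T_P(-2\alpha))=1$, i.e.\ $T_Q(\{X,Y\})=\pm1$. Injectivity of $\phi^*$ on $K_2(\Q(C_{4u}))$ modulo $2$-torsion says nothing about tame symbols. So exactly the $\pm1$ ambiguity that matters here remains; it is the same ambiguity that $\{-1,h\}$ is in $\alpha$ to remove.

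What you need is compatibility with \emph{pullback}: $T_P(\phi^*\beta)=T_{\phi(P)}(\beta)^{e_P}$ inside $\Q(P)^\times\supseteq\Q(\phi(P))^\times$. Since $\phi$ is an isogeny, $e_P=1$. Then $T_P(-2\alpha)=1$ and injectivity of $\Q(\phi(P))^\times\to\Q(P)^\times$ give $T_{\phi(P)}(\{X,Y\})=1$. This is the paper's argument.

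Your direct alternative also works, but you must show that each of the four tame symbols is $1$ separately, not that signs cancel across points. Each of $X,Y$ has degree $2$, and their zeros and poles are the simple points where $(X,Y)\to(\infty,\infty),(\infty,0),(0,\infty),(0,0)$; they are not points with $X\to 0$ and $Y$ tending to a finite root. At these points $Y/X$, $XY$, $XY$, $X/Y$ respectively tend to $-1$, by the edge polynomials $X+Y$, $X+Y^{-1}$, $X^{-1}+Y$, $X^{-1}+Y^{-1}$. That $-1$ cancels the sign $(-1)^{\ord(X)\ord(Y)}=-1$ within each symbol.
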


\begin{proof}
(1)
We can rewrite~\eqref{VWZeq} as~$ - (v-1/v) (w-1/w) = 4u $, and
multiplying out gives the stated identity.
From the divisors computed in the proof of Proposition~\ref{tameINT}(2)
one sees that the divisors of~$ v $ and~$ w $ are invariant under
translation by the point~$ (-u^2, 0) $ of order~2, hence translating~$ v $
and~$ w $ gives at most a sign change.
By the rewritten version of~\eqref{VWZeq} above, the signs for~$ v $ and~$ w $ are the same because~$ u \ne 0 $,
hence~$ X $ and~$ Y $ are invariant under the translation. Using
that the
translation does not give the identity on~$ \Q(E_u) $,
that~$ \Q(E_u) = \Q(v,w) $ as seen just before~\eqref{VWZeq},
as well as that~$ \Q(v,w) / \Q(X,Y) $ is of degree at most~2 because~$ v^2 = - X Y $
and~$ w = - v^{-1} X $, the result follows.

(2)
We have in~$ K_2^T(E_u) $ that~$ \phi^*(\symb X Y ) = \symb -vw v/w  $ is equal
to
\begin{equation*}
\symb -vw v/w  + \symb -v/w v/w = \symb v^2 v/w = 2 \symb v 1/w + 2 \symb -v v = - 2 \alpha 
.
\end{equation*}
It follows from this that~$ \symb X Y $ is in~$ K_2^T(C_{4u}) $ because~$ \phi : E_u \to C_{4u} $ is unramified
and~$ \Q(\phi(P))^\times \to \Q(P)^\times $ for~$ P $ in~$ E_u $ is injective.
\end{proof}

\begin{remar}
(1)
Substituting $x'=x+u^2$ to translate $(-u^2,0)$ to $(0,0)$, then using standard formulas as in \cite[X.4.8]{S1} to obtain the $2$-isogenous curve, we find that a Weierstrass form for $C_{4u}$ is
$$(Y')^2=(X')^3+2(2u^2-1)(X')^2+X'.$$
If~$u$ is an integer with $4\mid\mid u$, then after a further change of variables to achieve minimality at $2$,
the minimal discriminant is $(u/4)^2(u^2-1)$, the square root of that of $E_u$, and the reduction at bad primes is exactly as for $E_u$ (cf. Proposition~\ref{minimal}(4)). But in contrast to Proposition \ref{torsion}, the only rational point of order $2$ is $(0,0)$, the other points of order $2$ being $(-(2u^2-1)\pm 2u\sqrt{u^2-1},0)$.

(2)
Arguing along the lines of the proof of Proposition~\ref{tameINT}(2),
one can show that~$ \symb X Y $ is in~$ \INT C_k $ if~$ k \ne 0, \pm 4 $ is an integer,
but the argument is more involved.

(3)
Because the kernel of the pullback~$ K_2(\Q(C_{4u})) \to K_2(\Q(E_u)) $
is 2-torsion by the projection formula
(see~\cite[Corollary~V~3.~7.32]{Kbook})
it follows from Theorems~1.8 and~3.5 of~\cite{sus89} that this map
is, in fact, injective.
\end{remar}

In order to make the 2-part of the Bloch-Kato conjecture 
explicit in Section~\ref{2-BK}, we need to determine~$ \reg(\alpha) $
in~$ (2\pi i) H^1_B(E_u(\C), \R)^- $
(see~\eqref{regdef} with~$ m+1 = 2 = r $ and~\eqref{reg22target}).
For this, we want to compute~$ 2\pi i\int_{\gamma^-}\reg(2 \alpha)$,
where~$ \gamma^- $ is a generator of~$ H_1(E_u(\C), \Z)^- $.
We want to exploit that~$ -2 \alpha = \phi^*(\symb X Y ) $ with~$ \symb X Y $
in~$ K_2^T(C_{4u}) $.

\begin{lem} \label{reg-lemma}
Assume $u>1$. If~$ \gamma_C^- $ is a generator of~$H_1(C_{4u}(\C), \Z)^-$,
then, up to sign, we have
\begin{equation*} 2\pi i\int_{\gamma^-}\reg(\alpha) = 2\pi i \int_{\gamma^-_{C}}\reg(\symb X Y )
.
\end{equation*}
\end{lem}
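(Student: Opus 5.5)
The plan is to use functoriality of the regulator under the isogeny $\phi$ together with the projection formula, and then to compare the anti-invariant homology generators. By Proposition~\ref{boyd-isogeny}(2) we have $-2\alpha = \phi^*\symb X Y $, and the regulator commutes with pullback. Hence
\begin{equation*}
-2\int_{\gamma^-}\reg(\alpha) = \int_{\gamma^-}\phi^*\reg(\symb X Y ) = \int_{\phi_*\gamma^-}\reg(\symb X Y ),
\end{equation*}
where the regulator class is represented by the closed $1$-form $\eta$ of Remark~\ref{rem8.6}. The second equality is just change of variables for integration of forms along cycles. The lemma therefore reduces to the topological claim that $\phi_*\gamma^- = \pm 2\gamma_C^-$ in $H_1(C_{4u}(\C),\Z)^-$. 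Note that $\phi_*$ commutes with complex conjugation because $\phi$ is defined over $\Q$, so $\phi_*\gamma^-$ is indeed anti-invariant.

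To prove this claim, we view $E_u(\C)=\C/\Lambda$ with $\Lambda=\Z\omega^+\oplus\Z\omega^-$ as in Lemma~\ref{perioddet}, where $\omega^+$ is real and $\omega^-$ is purely imaginary. The map $\phi$ becomes $z\mapsto z$ modulo $\Lambda'\supset\Lambda$ with $[\Lambda':\Lambda]=2$, and $\Lambda'$ is generated by $\Lambda$ and a lift of the kernel point $(-u^2,0)$. We then need to locate that point. In the proof of Proposition~\ref{H1G-torsion}-type arguments and in the proof of Proposition~\ref{torsion}, $P=(u,u(u+1))$ lies on the real identity component for $u>1$, so $(0,0)=2P$ corresponds to $\omega^+/2$. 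For $u>1$ the roots satisfy $-u^2<-1<0$, so the real locus has two components. The points $(-u^2,0)$ and $(-1,0)$ lie on the oval and $(0,0)$ lies on the identity component, which confirms $(0,0)\leftrightarrow\omega^+/2$. Consequently $(-u^2,0)$ corresponds to $\omega^-/2$ or to $(\omega^++\omega^-)/2$. We will determine which one by checking whether $(-1,0)$ or $(-u^2,0)$ is the half-period $\omega^-/2$.

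In the case $(-u^2,0)\leftrightarrow\omega^-/2$, we get $\Lambda'=\Z\omega^+\oplus\Z\,\omega^-/2$. Both generators are still a real and an imaginary vector, so $\gamma_C^-$ corresponds to $\omega^-/2$ and $\gamma^-\mapsto 2\gamma_C^-$, as desired. In the other case, $\Lambda'$ is generated by $\omega^+$ and $(\omega^++\omega^-)/2$. This lattice is non-rectangular, which is consistent with $C_{4u}$ having only one rational $2$-torsion point. Its anti-invariant sublattice is then $\Z\omega^-$, so $\phi_*\gamma^-=\gamma_C^-$, and the lemma as stated would fail by a factor of~$2$. So the main obstacle is deciding which half-period $(-u^2,0)$ is.

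The first thing I would try is to use the Remark after Proposition~\ref{boyd-isogeny}, which says $C_{4u}$ has only one rational $2$-torsion point. Its real locus has a single component precisely when the discriminant of the cubic $X'^3+2(2u^2-1)X'^2+X'$ has the appropriate sign, and for $u>1$ the other $2$-torsion roots $-(2u^2-1)\pm2u\sqrt{u^2-1}$ are real. Hence $C_{4u}(\R)$ has two components and its period lattice is rectangular. This rules out the second case. It follows that $\phi_*\gamma^-=\pm2\gamma_C^-$, and dividing the displayed identity by $-2$ and multiplying by $2\pi i$ gives the lemma up to sign.
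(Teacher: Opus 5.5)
Your proof is correct, and its skeleton is the same as the paper's. You pull back along $\phi$ using $\phi^*\{X,Y\}=-2\alpha$, and you show that $C_{4u}$ has lattice $\Z\omega^+\oplus\Z\tfrac12\omega^-$, so that $\gamma^-$ maps $2$-to-$1$ onto $\gamma_C^-$. The only genuine difference is how you locate the kernel point $(-u^2,0)$.

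The paper simply asserts that $(-u^2,0)$ corresponds to $\tfrac12\omega^-$. This is immediate from its choice of $\gamma^-$ as the loop over real $x\le -u^2$. Half of that loop is a path from $O$ (at $x=-\infty$) to $(-u^2,0)$, so the Abel--Jacobi image of $(-u^2,0)$ is $\tfrac12\omega^-$, with no case distinction.

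You argue differently in two steps:
\begin{enumerate}
\item Since $(-u^2,0)$ lies on the oval, it corresponds to $\tfrac12\omega^-$ or to $\tfrac12(\omega^++\omega^-)$.
\item You exclude the rhombic option because the Weierstrass cubic of $C_{4u}$ has three real roots. Hence $C_{4u}(\R)$ has two components, and any conjugation-compatible lattice for it must be rectangular.
\end{enumerate}
This is valid. However, it imports the Weierstrass model of $C_{4u}$ from the remark after Proposition~\ref{boyd-isogeny}, together with the fact that a curve has two real components if and only if its lattice is rectangular. The paper's argument stays on $E_u$ and amounts to a one-line observation.

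One side remark in your proposal is beside the point: you tie non-rectangularity to $C_{4u}$ having only one \emph{rational} $2$-torsion point. What governs rectangularity is whether the $2$-torsion is \emph{real}, which is exactly what your final paragraph correctly uses.
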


\begin{proof}
Integrating~$ \frac{dx}{y} $ over the loop~$ \gamma^- $ in~$ E_u(\C) $ where~$ x $ is real and~$ x \le -u^2 $ gives a purely imaginary period~$ \omega_1 $.
Similarly, integrating over the loop~$ \gamma^+ $ where~$ x $ is real and~$ x \ge 0 $ gives
a real period~$ \omega_2 $.
With~$ \Lambda = \Z \omega_1 + \Z \omega_2 $ in~$ \C $, we get an identification
of~$ E_u(\C) $ with~$ \C/\Lambda $, compatible
with complex conjugation.
Because~$ (-u^2,0) $ corresponds to~$ \frac 12 \omega_1 $,
$ C_{4u} $ has lattice~$ \Z \frac12\omega_1 + \Z \omega_2 $.
Then the line segment~$  0 \to \frac12\omega_1 $ gives a generator~$ \gamma_C^- $
of~$H_1(C_{4u}(\C), \Z)^-$, to which~$ \gamma^- $ maps 2-1.
The result follows because~$ \phi^*(\reg(\symb X Y )) = \reg(-2\alpha) $.
\end{proof}

The following proposition is essentially an instance of the theorem in \cite[\S 10]{V} (combined with the material in \cite[\S~11,12]{V} on expansions), but with $\gamma_C^+$ corrected to $\gamma_C^-$, and the missing details for the proof supplied here.

\begin{prop} \label{reg-prop}
Let~$ \gamma_C^- $ be a generator of~$ H_1(C_{4u}(\C), \Z)^- $
where~$ u $ is in~$ \Q $ and~$ u > 1 $.
Define\footnote{One can check that the series converges for all $|u|\geq 1$.} $F(u):=\log(4u)-\sum_{n=1}^{\infty}\left({2n\atop n}\right)^2\frac{(4u)^{-2n}}{2n}$.
Then, up to sign,
$$\frac{1}{2\pi i}\int_{\gamma^-_{C}}\reg(\symb X Y )=F(u).$$
\end{prop}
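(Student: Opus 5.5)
We will compute the regulator integral directly from the explicit form of Remark~\ref{rem8.6}. There, $\reg(\symb X Y)$ is represented on $C_{4u}(\C)$ minus the zeros and poles of $X,Y$ by the $1$-form
\[
\eta(X,Y)=i\bigl(\log|X|\,d\arg Y-\log|Y|\,d\arg X\bigr),
\]
and $\frac{1}{2\pi i}\int_{\gamma_C^-}\eta(X,Y)$ is independent of the representing loop, because $\symb X Y$ lies in $K_2^T(C_{4u})$ (Proposition~\ref{boyd-isogeny}(2)). The plan is to choose a concrete anti-invariant loop on which this integral can be evaluated in closed form, and then to expand the result in powers of $(4u)^{-1}$.

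\textbf{Choice of loop.} Fix $X=e^{i\theta}$ on the unit circle. The equation $X+X^{-1}+Y+Y^{-1}=4u$ becomes $Y+Y^{-1}=4u-2\cos\theta$. Since $u>1$, the right-hand side exceeds $2$, so there are two real positive roots $Y_\pm(\theta)$ with $Y_+Y_-=1$ and $Y_+>1$. The branch $\theta\mapsto (e^{i\theta},Y_+(\theta))$, $\theta\in[0,2\pi]$, is a closed loop $\gamma$ on $C_{4u}(\R)$-adjacent real points avoiding $X,Y\in\{0,\infty\}$. Complex conjugation sends $\theta\mapsto-\theta$ and fixes $Y_+$, so $\gamma$ reverses orientation under conjugation and is anti-invariant. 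On $\gamma$ we have $\log|X|=0$ and $\arg Y=0$, so only the term $-i\log|Y|\,d\arg X$ survives, and
\[
\frac{1}{2\pi i}\int_{\gamma}\eta=-\frac{1}{2\pi}\int_0^{2\pi}\log Y_+(\theta)\,d\theta .
\]
By Jensen's formula this equals, up to sign, the Mahler measure $m(4u-X-X^{-1}-Y-Y^{-1})$.

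\textbf{Expansion.} Write $\log Y_+=\log k-\sum_{m\ge1}\frac{1}{m}\,\mathrm{CT}\bigl[(X+X^{-1}+Y+Y^{-1})^m\bigr]k^{-m}$ with $k=4u$. The constant term of $(X+X^{-1}+Y+Y^{-1})^m$ vanishes for $m$ odd and equals $\binom{2n}{n}^2$ for $m=2n$. This gives
\[
\frac{1}{2\pi}\int_0^{2\pi}\log Y_+\,d\theta=\log(4u)-\sum_{n\ge1}\binom{2n}{n}^2\frac{(4u)^{-2n}}{2n}=F(u).
\]
Convergence holds for $4u>4$, since the Newton polytope sup-norm is $4$.

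\textbf{Main obstacle.} The hard step is showing that $\gamma$ is a \emph{generator} of $H_1(C_{4u}(\C),\Z)^-$, rather than a multiple of it or a class with some $\gamma_C^+$ component. This is exactly the correction of $\gamma_C^+$ to $\gamma_C^-$ relative to \cite{V}. I would pass to the Weierstrass model of the Remark after Proposition~\ref{boyd-isogeny}, or pull back to $E_u$ via $\phi$, and track where $\gamma$ lies. On $E_u$, $|X|=1$ and $Y>0$ translate into $|v|=1$ and $v/w$ real, which should correspond to real $x\le -u^2$ together with its companion, as in the proof of Lemma~\ref{reg-lemma}. I would check that $\gamma$ meets the real locus $C_{4u}(\R)$ transversally in exactly one point per real component, say at $\theta=0$ and $\theta=\pi$. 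Since the lattice $\Z\frac12\omega_1+\Z\omega_2$ is rectangular, intersection number $1$ with the real cycle $\gamma_C^+$ forces $\gamma=\pm\gamma_C^-$ modulo $\gamma_C^+$, and anti-invariance then removes the $\gamma_C^+$ part. Independence of the class from the chosen representative, given that $\eta$ is closed on the complement of the divisor and the tame symbols are trivial, is quoted from \cite{DdJZ} as in Remark~\ref{rem8.6}.
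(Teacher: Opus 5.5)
Your argument is correct, but it is the classical Deninger/Rodriguez Villegas computation rather than the paper's route. You work with the real regulator form $\eta(X,Y)$ of Remark~\ref{rem8.6} on the explicit loop $\{|X|=1,\ Y=Y_+\}$, where it collapses to $-i\log Y_+\,d\theta$. You then use Jensen's formula to turn the average of $\log Y_+$ into the Mahler measure of $4u-\varphi$ before expanding; note that your displayed expansion of $\log Y_+$ only holds after averaging over $\theta$, not pointwise.

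The paper instead extends $\{4u-\varphi,X,Y\}$ to a higher Chow cycle $\Xi$ on $U=\PP\setminus C$ with residue $\xi$, applies the integral regulator $R$ of \cite{KLM,KL}, and uses adjointness of Tube and $2\pi i\,\mathrm{Res}$ to move the integral to the torus $\mathbb{T}$. This needs more machinery (the extension of the symbol, commutativity of \eqref{eq11.5}). In return it gives the full complex period $\int_\gamma R(\xi)=2\pi i F(u)$ of the integral regulator for complex $u$ with $\mathrm{Re}(u)>1$. Your route yields only the real regulator for real $u>1$, which is all the proposition claims.

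For identifying the cycle, your approach is more direct. Anti-invariance is immediate, since conjugation sends $\gamma(\theta)$ to $\gamma(-\theta)$. The paper instead computes the period of the real-de Rham form $\omega$ over $\gamma$ and uses that $\gamma$ is a primitive vanishing cycle of the $\mathrm{I}_8$ degeneration.

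The intersection check you defer does go through. The only real points of $\gamma$ are $\gamma(0)=(1,Y_+(0))$, on the oval of $C_{4u}(\R)$ in the quadrant $X,Y>0$, and $\gamma(\pi)=(-1,Y_+(\pi))$, on the other real component (where $XY<0$). At both points $X$ is a local coordinate, the real locus is $X\in\R$, and $\gamma$ is $X=e^{i\theta}$. So the intersections are transversal and $\gamma\cdot\gamma_C^+=\pm1$.

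The detour through $E_u$ is unnecessary, and its claim $|v|=1$ is wrong: $v^2=-XY$ gives $|v|^2=Y_+$.
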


\begin{proof}
We abbreviate $C_{4u}$ to $C$ in this proof.
Through composition of the natural map~$ K_2^T(C) \to K_2^T(C) \otimes_\Z \Q = H^2_{\mathcal{M}}(C, \Q(2))$
with~\eqref{regdef} we get a regulator map
$$ \reg: K_2^T(C) \rightarrow H^2_{\mathcal{D}}(C_{\R}, \R(2))$$
(cf.~the map~$ \reg_{\ell} $ in~\eqref{K2TregCD}).
There is also an integral regulator map
(see~\cite{Bl3} and~\cite{KL})
$$R: \mathrm{CH}^2(C, 2)\rightarrow H^2_{\mathcal{D}}(C_{\C}, \ZZ(2))\simeq \frac{H^1_{\dR}(C_{\C})}{H^1_B(C(\C), \ZZ(2\pi i)^2)}.$$ Composing $R$ with the projection $\pi_{\RR}$ to $H^2_{\mathcal{D}}(C_{\CC},\RR(2))\simeq \frac{H^1_{\dR}(C_{\R})}{H^1_B(C(\C),\R(2\pi i)^2)}$, the image lies in the $+$ part.
Combining the identification~$ CH^2(F,2) = K^M_2(F) $ of \cite[Theorem~1]{Tot92},
the localisation sequences for (cubical) higher Chow groups (see \cite{Park23}), 
and the fact that~$ CH^1(F,2) = \{0\} $, with the compatibility of the tame symbol and the localisation map
(see \cite[p.25]{Ke}), we obtain an identification $K_2^T(C) = \mathrm{CH}^2(C, 2) $.
We then have a diagram
\begin{equation}
\begin{split}
\label{eq11.5}
\xymatrix{\mathrm{CH}^2(C,2) \ar [r]^{R\mspace{30mu}} \ar @{=} [d] & H^2_{\mathcal{D}}(C_{\CC},\ZZ(2)) \ar @{->>} [r]^{\pi_{\RR}} & H^2_{\mathcal{D}}(C_{\CC},\RR(2)) \\ K^T_2(C) \ar [rr]^-{\reg} && H^2_{\mathcal{D}}(C_{\RR},\RR(2)) , \ar @{^(->} [u]}
\end{split}
\end{equation}
which we shall see below to be commutative.

Let $\xi\in\mathrm{CH}^2(C,2)$ denote the element to which $\{X,Y\}$ extends.  To compute its image under $R$, we think
of $C$ as living inside the toric variety $\PP$ obtained by blowing up $\PP^1\times \PP^1$ at the~4 torus fixed points
(and meeting the 4 exceptional divisors each with multiplicity~1).  Writing 
$$\varphi(X,Y):=X+X^{-1}+Y+Y^{-1}$$ 
and $U:=\PP\setminus C$, the symbol $\{4u-\varphi,X,Y\}\in K_3^M(\QQ(X,Y))$ extends to a cycle $\Xi$  in~$ \CH^3(U,3)$ with $\mathrm{Res}_C(\Xi)=\xi$.
In order to see this, again using the triviality of $ CH^1(F,2) $ for
any field~$ F $, it is enough to check the vanishing of its residues
in $ K_2^M $ of the function fields of the
curves in~$ U $. These are obviously trivial except for two types
of curves.  The first is represented by $ X=0 $, with function field $ \Q(Y) $. 
Here we rewrite the symbol as~$ \{-4uX+X\varphi,X,Y\} $.
Since $ -4uX+X\varphi $ evaluates to 1 at $ X=0 $, the tame symbol vanishes.
The second type is a blowup component with function field $ \Q(X) $, where $ Y=Xv $
for a coordinate~$ v $ in the blowup.
Now we multiply the first entry by $ -Y = - X v $ and notice that it
evaluates to~1 for~$ v = 0 $.

The next part of the argument is analytic and invokes the computation of $R$ via the explicit morphism of complexes in \cite{KLM}, as corrected in \cite{KL}.  Its compatibility with Bloch's cycle class map and (under composition with $\pi_{\RR}$) with the real regulator
(which is~$ \reg $ in our case) are checked in \cite[\S7]{KLM} and \cite[\S3.1]{Ke}, respectively.  (See also \cite{Ke2}.)
This shows that the diagram~\eqref{eq11.5} is commutative.

In particular, the image of~$ R $ of a symbol~$\{f,g\}$
on a suitable Zariski open part is represented
by $\log(f)\frac{dg}{g}-2\pi i\log(g)\delta_{T_f}$, where we take $\log(f)$ to have argument in $(-\pi,\pi)$ with branch cut along $T_f:=f^{-1}(\R_-)$, oriented from $-\infty$ to $0$.  Taking $i\mathrm{Im}$ of this current computes $\pi_{\RR}(R\{f,g\})$, and subtracting $d[i\arg(f)\log|g|]$ from this recovers exactly $\eta(f,g)$
as in Remark~\ref{rem8.6}, 
which, together with the identification~\eqref{reg22target},
makes explicit the commutativity of~\eqref{eq11.5}.

What follows works for $u\in\CC$ with $\mathrm{Re}(u)>1$.  The 3-chain $\Gamma=\{|X|\leq 1=|Y|\}$ has boundary $\mathbb{T}=\{|X|=|Y|=1\}\cong S^1\times S^1\subset U$.  Since $|\varphi(X,Y)|\leq 4$ on~$\mathbb{T}$, we have $\mathbb{T}\cap C=\emptyset$, making $\gamma:=\Gamma\cap C\in H_1(C,\ZZ)$. In fact, $\gamma$ is a primitive vanishing cycle for the semistable degeneration $u\to \infty$ (with $\mathrm{I}_8$ singular fiber the toric boundary of $\PP$).
Since $\mathbb{T}=\mathrm{Tube}(\gamma)$ under $\mathrm{Tube}\colon H_1(C_{4u},\ZZ)\to H_2(U,\ZZ)$, adjointness of Tube and~$ 2 \pi i $Res
gives
\begin{align*}
\int_{\gamma}R(\xi)&=\int_{\gamma}R(\mathrm{Res}_C(\Xi))=\int_{\gamma}\mathrm{Res}_C(R(\Xi))=\frac{1}{2\pi i}\int_{\mathbb{T}}R(\Xi) \\
&= \frac{1}{2\pi i}\int_{\mathbb{T}}R\{4u-\varphi,X,Y\}.
\end{align*}
Now
$R\{f,g,h\}=\log(f)\tfrac{dg}{g}\wedge\tfrac{dh}{h}-2\pi i\delta_{f^{-1}(\R_-)} R\{g,h\}$, but here $f=4u-\varphi$ does
not take negative real values on $\mathbb{T}$ under the assumption on $u$.  So writing $[-]_0$ for the
constant term in a Laurent polynomial, the above
equals
\begin{align*}
\frac{1}{2\pi i}\int_{\mathbb{T}}\log(4u-\varphi)\frac{dX}{X}\wedge\frac{dY}{Y}
&=\frac{1}{2\pi i}\int_{\mathbb{T}}\left\{\log(4u)-\sum_{m>0}\frac{\varphi^m}{m(4u)^m}\right\}\frac{dX}{X}\wedge\frac{dY}{Y}\\
&=2\pi i\left(\log(4u)-\sum_{m>0}\frac{[\varphi^m]_0}{m(4u)^{m}}\right),
\end{align*}
where the series expansion works since $|\varphi(X,Y)/4u|<1$ on $\mathbb{T}$. 
We compute that $[\varphi^m]_0=0$ for $m$ odd and $[\varphi^{2n}]_0=\sum_{k+\ell=n}\frac{(2n)!}{k!^2\ell!^2}=\binom{2n}{n}^2$, and conclude that $\int_{\gamma}R(\{X, Y\})=2\pi i F(u)$.  Taking
imaginary parts gives $\int_{\gamma}\reg(\{X, Y\})=2\pi i\mathrm{Re}(F(u))$, and $F(u)$ is already real
for $u >1$.

To complete the proof, we need to know that $\gamma$ equals~$\pm\gamma_C^-$ when $ u \in \R_{u>1} $; since it is primitive, it suffices to check that it is anti-invariant under complex conjugation on points for $u\in \R_{>1}$.  For $u$ in this range, notice that $\omega:=\text{Res}_C \left(\frac{dX/X\wedge dY/Y}{1-\varphi/4u}\right)$ is $\RR$-de Rham, and --- once again invoking the adjointness of $\mathrm{Tube}$ and $2\pi i\mathrm{Res}$ --- that
\begin{align*}
\int_{\gamma}\omega&=\frac{1}{2\pi i}\int_{\mathbb{T}}\frac{dX/X\wedge dY/Y}{1-\varphi/4u} = \frac{1}{2\pi i}\int_{\mathbb{T}}\sum_{m\geq 0}\frac{\varphi^m}{(4u)^m}\frac{dX}{X}\wedge\frac{dY}{Y}\\ &=2\pi i\sum_{n\geq 0}\binom{2n}{n}^2(4u)^{-2n}.
\end{align*}
Since the last expression takes purely imaginary values on $\R_{>1}$, $\gamma$ is anti-invariant and we are done.
\end{proof}

\begin{cor} \label{regnonzero}
If~$ u > 1$ in Proposition~\ref{reg-prop}, then
$\frac{1}{2\pi i}\int_{\gamma^-_{C}}\reg( \symb X Y )\ne 0$.
\end{cor}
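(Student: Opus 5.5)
By Proposition~\ref{reg-prop}, the quantity in question equals $\pm F(u)$, where
\[
F(u)=\log(4u)-\sum_{n\ge1}\binom{2n}{n}^2\frac{(4u)^{-2n}}{2n}.
\]
So the plan is to prove $F(u)>0$ for $u>1$, which gives the claim.

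The first step is to differentiate termwise; this is allowed for $u>1$ because the series converges uniformly on compact subsets of $u>1$. We get
\[
u\frac{dF}{du}=1+\sum_{n\ge1}\binom{2n}{n}^2(4u)^{-2n}=\sum_{n\ge0}\binom{2n}{n}^2 16^{-n}u^{-2n}={}_2F_1\!\left(\tfrac12,\tfrac12;1;u^{-2}\right),
\]
which is a series with positive terms. Hence $F$ is strictly increasing on $(1,\infty)$.

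The second step, which is the delicate point, is to control $F$ at the endpoint $u\to1^+$. Since $\binom{2n}{n}^2 16^{-n}\sim 1/(\pi n)$, the terms $\binom{2n}{n}^2 4^{-2n}/(2n)$ behave like $1/(2\pi n^2)$, so the series converges absolutely at $u=1$. It is dominated uniformly for $u\ge1$, so $F$ extends continuously to $u=1$ with
\[
F(1)=\log4-\sum_{n\ge1}\binom{2n}{n}^2\frac{16^{-n}}{2n}.
\]
To see $F(1)>0$, I would bound the sum using $\binom{2n}{n}^2 16^{-n}\le \frac{1}{\pi n}\cdot\frac{1}{1-1/(8n)}$, or more simply $\le \frac1{\pi n}\cdot\frac{8n}{8n-1}$. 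I would take the first few terms exactly, $1/8+9/256+\cdots$, and bound the tail by $\frac{1}{2\pi}\sum_{n>N}\frac{1}{n^2}\cdot\frac{8n}{8n-1}$. This gives a total well below $\log 4\approx1.386$; already the crude bound $\sum_n \binom{2n}{n}^2 16^{-n}/(2n)\le \frac12\sum_n \frac1{n^2}\cdot\frac14 \cdot(\text{small constant})$ suffices.

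Cleanest of all: since $\binom{2n}{n}^2 16^{-n}\le \binom{2n}{n}4^{-n}\le1$, the sum is at most $\sum_n \frac{1}{2n}\binom{2n}{n}4^{-n}$. With $c_n=\binom{2n}{n}4^{-n}$ we have the generating function $\sum_n c_n x^n=(1-x)^{-1/2}$, so
\[
\sum_{n\ge1}\frac{c_n}{n}x^n=\int_0^x\frac{(1-t)^{-1/2}-1}{t}\,dt=2\log\frac{2}{1+\sqrt{1-x}}.
\]
At $x=1$ this equals $2\log2$, so the sum is at most $\frac12\cdot2\log2=\log2$. Thus $F(1)\ge\log4-\log2=\log 2>0$.

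Combining the two steps, $F(u)>F(1)>0$ for all $u>1$, so the regulator integral is nonzero.
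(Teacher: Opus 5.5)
Your proof is correct. The monotonicity step is the paper's: its ``differentiation shows $F$ is increasing'' is your computation $uF'(u)={}_2F_1(\tfrac12,\tfrac12;1;u^{-2})>0$.

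The two arguments differ at the endpoint $u=1$:
\begin{itemize}
\item The paper notes that $C_4$ is rational, parametrises it by $\PP^1$, and invokes \cite[Prop.~6.3]{DK}. This gives the exact value $F(1)=\frac{4}{\pi}D_2(i)=\frac{4}{\pi}L(\chi_{-4},2)$, i.e.\ $4/\pi$ times Catalan's constant.
\item You bound $F(1)$ from below elementarily. With $c_n=\binom{2n}{n}4^{-n}$, you use $c_n^2\le c_n$ together with $\sum_{n\ge1}c_nx^n/n=2\log\frac{2}{1+\sqrt{1-x}}$ to get $F(1)\ge\log 2$. This is consistent with $F(1)\approx 1.166$.
\end{itemize}
Your route is self-contained: it needs no dilogarithm or regulator computation on the degenerate fibre. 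It also makes explicit the continuity of $F$ at $u=1$, which the paper only indicates in a footnote. The paper's route gives the exact value, and ties $u=1$ to the known evaluation of the Mahler measure of $4+X+X^{-1}+Y+Y^{-1}$.

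Two small clean-ups:
\begin{itemize}
\item Delete the exploratory middle paragraph. The ``crude bound'' with an unspecified ``small constant'' is not an argument, and the generating-function bound supersedes it.
\item At $x=1$, say explicitly that the series identity extends by monotone convergence (or Abel's theorem), since all terms are nonnegative.
\end{itemize}
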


\begin{proof}
Differentiation shows~$ F(u) $ is increasing on~$ (1,\infty) $. At $u=1$, the
curve~$C_4$  is parametrised by its normalisation (a projective
line) as
\[
X(z):=\frac{(1-\frac{1}{z})(1+\frac{i}{z})}{(1+\frac{1}{z})(1-\frac{i}{z})},\;\;\;\;\;Y(t)=\frac{(1-z)(1-\frac{z}{i})}{(1+z)(1+\frac{z}{i})},
\]
whence by \cite[Prop.~6.3]{DK} we have $$F(1)=\frac{4}{\pi}D_2(i)=\frac{4}{\pi}L(\Q,\chi_{-4},2)=\frac{4}{\pi}\sum_{k\geq 0}\frac{(-1)^k}{(2k+1)^2}>0,$$
where~$D_2$ is the Bloch-Wigner function and $\chi_{-4}$ the non-trivial Dirichlet character
with conductor~4.
\end{proof}

\begin{remar} \label{remark:u=4}
It is immediate from \cite[Cor.~4.4]{DK} that on all of $(1,\infty)$
we have $F(u)=m(4u):=\mathsf{m}(4u+\varphi(X,Y))$, where $\mathsf{m}$
denotes the logarithmic Mahler measure.  By invoking the Main Theorem and
equation~(40) of \cite{RZ}, we then obtain~$F(4)=m(16)=11\,m(1)=-\frac{165}{(2 \pi i)^2}L(\mathsf{E}_{15},2)= - \frac{165}{(2 \pi i)^2}L(E_4,2)$,
where $\mathsf{E}_{15}$ in the notation of loc.\ cit.\ is our
curve~$ C_1 $, which is an elliptic curve of conductor 15, and
we used that isogenous elliptic curves have the same
$L$-function as well as that there is only one isogeny class of elliptic
curves over~$ \Q $ with conductor~15 \cite{LMFDB}.
\end{remar}

\section{The 2-part of the Bloch-Kato conjecture for~\texorpdfstring{$ K_2(E_u) $}{K2(Eu)}} \label{2-BK}

Because of the exact sequence~\eqref{K2locseq}, and~$ K_2 $ of
any field being pure of weight~2,
we may identify~$ K_2(E_u) \otimes_\Z \Q = K_2^T(E_u) \otimes_\Z \Q = \textup{gr}^2( K_2(E_u) \otimes_\Z \Q) $
with~$ H^2_{\MMM}(E_u, \Q(2)) $,
as mentioned in Section~\ref{regBeil}.
This identifies~$ \INT E_u \otimes_\Z \Q $ with~$ H^2_{\MMM}(E_u, \Q(2))_{\Z}$.
In line with Beilinson's conjecture, we shall assume that~$ H^2_{\MMM}(E_u, \Q(2))_{\Z}$
has dimension~1. Below we shall also use this to simplify the notation
by identifying free rank~1 modules with their determinants.
We also let~$ \ell $ denote a prime number.

With this notation, the map~$ \reg_{\ell}^{\Q_\ell} : H^2_{\MMM}(E_u, \Q(2)) \to  H^1(\Q, \het^1(E_{u,\Qbar}, \Q_\ell(2))) $
mentioned just below~\eqref{MMTell-def} is induced by the map~$ \reg_{\ell} $ in~\eqref{K2TregCD}.
Its restriction to~$ H^2_{\MMM}(E_u, \Q(2))_{\Z} $, denoted~$ \reg_{\ell,\Z} $,
was already introduced in Section~\ref{BlKa},
where~\eqref{MMTell-def} defined~$ H^2_{\MMM}(E_u, \Q(2))_{\Z, T_{\ell}} $ as~$ \reg_{\ell,\Z}^{-1}(H^1(\Q, T_{\ell}(2))_\tf) $,
a~$ \Z_{(\ell)} $-submodule in the 1-dimensional~$ \Q $-vector space~$ H^2_{\MMM}(E_u, \Q(2))_{\Z} $.
We fix~$ \beta_u\ne 0 $ in~$ H^2_{\MMM}(E_u, \Q(2))_{\Z, T_{\ell}} $,
and set~$ \iota_{u,\ell} = | H^2_{\MMM}(E_u, \Q(2))_{\Z, T_{\ell}} : \Z_{(\ell)} \beta_u | $.
Under our assumptions this is finite if and only if~$ \reg_{\ell}(\beta_u) $ is not
divisible by arbitrarily large powers of~$ \ell $.
If also~$ \Z \, \reg(\beta_u) = f(u) \DDD_{1,2} $
for some real number~$ f(u) \ne 0 $,
then in the~$ \Z_{(\ell)} $-version of~\eqref{Ru-def} discussed
after~\eqref{BKconj1} we have
\begin{equation*}
f(u) \DDD_{1,2} \otimes_\Z\Z_{(\ell)} = \Z_{(\ell)} \reg(\beta_u) = \iota_{u,\ell} R_{u,\ell} \DDD_{1,2} \otimes_\Z\Z_{(\ell)}
,
\end{equation*}
so that~\eqref{BKconj} becomes
$$\ord_{\ell}\left(\frac{\iota_{u,\ell}\,L(E_u,2)}{f(u)}\right)
=
\ord_{\ell}\left(\frac{\prod_{p\leq\infty}\mathrm{Tam}^0_{p,\omega}(T_{\ell}(2))\,\# H^1_f(\Q, E_u[\ell^{\infty}](-1))}{\#H^0(\Q, E_u[\ell^{\infty}](1))\,\#H^0(\Q, E_u[\ell^{\infty}](-1))}\right).$$
This is under the following conditions.
\begin{assumption} \label{assumptions}
(1)
$ H^2_{\MMM}(E_u, \Q(2))_{\Z} $ is 1-dimensional, generated by~$ \beta_u $.

(2)
$ f(u) $ is non-zero.

(3)
$L(E_u, 2)/R_{u,\ell}  $ is in~$ \Q^\times $.

(4)
$ H^1_f(\Q, E_u[\ell^{\infty}](-1)) $ is finite.

(5)
$ \iota_{u,\ell} $ is finite.
\end{assumption}

Now taking~$ u \ne 0, \pm1 $ such that~$ 4 u $ is an integer, we
let~$ \beta_u $ be~$ \alpha $ as in Proposition~\ref{tameINT}
so that~$ \alpha $ is in~$ K_2^T(E_u) $, and~$ 2 \alpha $ is in~$ \INT E_u $.
Under more conditions on~$ u $
we can verify~(2) in Assumption~\ref{assumptions}, as well as~(5) for~$ \ell = 2 $.
(Condition~(4) for~$ \ell=2 $ also holds for various~$ u $; see Remark~\ref{yet-another-label} and the tables in Section~\ref{num-sec}.)
In order to use our earlier results, we assume that~$ u > 1 $ is an integer such that~$4\parallel u$ and~$ \ord_p(u(u^2-1))$ is zero
or odd for every odd prime~$ p $.
With~$ F(u) $ as in Proposition~\ref{reg-prop}, we find using
Proposition~\ref{Dint}, Lemma~\ref{reg-lemma}, Proposition~\ref{reg-prop},
and Corollary~\ref{regnonzero} that~$ f(u) = \pm (2\pi i)^2F(u) \ne 0$,
and Corollary~\ref{new-index-cor} together with Remark~\ref{m=1or2}
shows~$ \iota_{u,2} = 1 $ or~2.
In fact, with notation as in Corollary~\ref{new-index-cor} together with Remark~\ref{m=1or2}, we have~$ \iota_{u,2} = 1 $ if~$ m_u = 1 $,
i.e., when $ u+1 $ has a prime divisor that is congruent to~3
modulo~4, and~$ \iota_{u,2} = 1 $ or~2 if that is not the case,
so that~$ m_u = 2 $.

\begin{remar} \label{uis4}
For~$ u = 4 $, we also know (3) by Remark~\ref{remark:u=4}. Moreover,
explicit calculation using Corollary~\ref{2torscor} and Remark~\ref{selmertrivial}
shows that~$ H_f^1(\Q, E_4[2^\infty](-1) ) $
is trivial, so that (4) also holds, and only~(1) remains unknown.
\end{remar}

Referring back to Section \ref{BlKa}, note that our use of the element $\alpha$, for which we know that~$ \iota_{u,2} = 1 $ or~2, proves that $ H^2_{\MMM}(E_u, \Q(2))_{\Z, T_{2}} $ is a $\Z_{(2)}$-line
in $ H^2_{\MMM}(E_u, \Q(2))_{\Z} $, assuming that the latter is $1$-dimensional.  By~\eqref{K2TregCD}, $\alpha$ is in fact an element of~$ H^2_{\MMM}(E_u, \Q(2))_{\Z, T}=\cap_{\ell}  H^2_{\MMM}(E_u, \Q(2))_{\Z, T_{\ell}} $, so we may take $\beta_u=\alpha$ simultaneously for all $\ell$. But we know neither that the~$\iota_{u,\ell}$ are all finite nor that they are almost all $1$, so we cannot use $\alpha$ to show that~$ H^2_{\MMM}(E_u, \Q(2))_{\Z, T} $
as defined in~\eqref{MMTell-def} is a free~$\Z$-module of rank $1$.

In order to make  the version of~\eqref{BKconj} given above more explicit by incorporating our results from previous sections,
we let~$\omega(n)$ denote the total number of distinct prime divisors of~$n$,
as usual.
We also let~$\omega_1(n)$ (respectively, $\omega_3(n)$) denote the number of
distinct prime divisors of $n$ congruent to~1 (respectively~3) modulo~4.
Observing from~\eqref{Tam-def} that~$ \mathrm{Tam}^0_q(T_{\ell}(2)) = 1 $
if the curve has good reduction at~$ q \ne 2 $, and
using now Propositions~\ref{2tors}, \ref{tams}, \ref{taminf},
and~\ref{2tam2}, we obtain the following.

\begin{prop}\label{BK2}
Let~$ u $ be a positive integer congruent to~4 modulo~8, and such that~$ \ord_p(u(u^2-1))$
for each odd prime number is zero or odd.
Suppose that~(1), (3) and~(4) in Assumption~\ref{assumptions} hold,
and define~$s_u$ as $\ord_2(\# H^1_f(\Q, E_u[2^{\infty}](-1)))$. Then the Bloch-Kato conjecture predicts that
$$\ord_2\left(\frac{L(E_u, 2)}{(2\pi i)^2 F(u)}\right)= 2 \omega_1(u)+\omega_3(u)+\omega(u^2-1)-2+s_u-\ord_2(\iota_{u,2}),$$
where $F(u)=\log(4u)-\sum_{n=1}^{\infty}\left({2n\atop n}\right)^2\frac{(4u)^{-2n}}{2n}$ and $\iota_{u,2}=|H^2_{\MMM}(E_u, \Q(2))_{\Z, T_2}: \Z_{(2)}\alpha |$.
\end{prop}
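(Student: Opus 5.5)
The plan is to specialise the displayed $\ell=2$ form of \eqref{BKconj} obtained just before Assumption~\ref{assumptions}, i.e.
$$\ord_{2}\left(\frac{\iota_{u,2}\,L(E_u,2)}{f(u)}\right)
=
\ord_{2}\left(\frac{\prod_{p\leq\infty}\mathrm{Tam}^0_{p,\omega}(T_{2}(2))\,\# H^1_f(\Q, E_u[2^{\infty}](-1))}{\#H^0(\Q, E_u[2^{\infty}](1))\,\#H^0(\Q, E_u[2^{\infty}](-1))}\right),$$
and evaluate each term. First I will check that the remaining items of Assumption~\ref{assumptions} hold for our $u$. Condition (2) holds because $f(u)=\pm(2\pi i)^2F(u)\neq 0$, using Proposition~\ref{Dint}, Lemma~\ref{reg-lemma}, Proposition~\ref{reg-prop} and Corollary~\ref{regnonzero}. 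Condition (5) holds because $\iota_{u,2}\in\{1,2\}$ by Corollary~\ref{new-index-cor} and Remark~\ref{m=1or2}. Conditions (1), (3), (4) are hypotheses. We then take $\beta_u=\alpha$. Since $u\equiv 4\pmod 8$ we have $4\parallel u$, so Proposition~\ref{minimal}(3),(4) apply. Also $u^2-1\equiv -1 \pmod 8$, so $|u^2-1|$ is not a square, which is what Proposition~\ref{2tors} needs.

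Next I compute the right-hand side. By Proposition~\ref{2tors} each global $H^0$ in the denominator has order $4$, contributing $-4$ in total. By Proposition~\ref{taminf}, $\Tam^0_\infty=2$, contributing $1$. By Proposition~\ref{2tam2}, $\Tam^0_{2,\omega}(T_2(2))=2$, contributing $1$. At odd primes of good reduction the factor is $1$ by \eqref{Tam-def}. At odd $p$ dividing $u(u^2-1)$, the exponent $\ord_p$ is odd by hypothesis. Since $u$ and $u^2-1$ are coprime, exactly one of Proposition~\ref{tams}(1),(2) applies.
\begin{itemize}
\item Each odd $p\mid u$ with $p\equiv1\pmod 4$ contributes $2$, giving $2\omega_1(u)$ in total (all such primes are odd).
\item Each odd $p\mid u$ with $p\equiv 3\pmod 4$ contributes $1$, giving $\omega_3(u)$.
\item Each $p\mid u^2-1$ contributes $1$, giving $\omega(u^2-1)$ (all such primes are odd, since $u$ is even).
\end{itemize}
The Selmer term contributes $s_u$. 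So the right-hand side equals
$$2\omega_1(u)+\omega_3(u)+\omega(u^2-1)+s_u+2-4.$$

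Finally, I substitute $f(u)=\pm(2\pi i)^2F(u)$ on the left, which gives $\ord_2(\iota_{u,2})+\ord_2\bigl(L(E_u,2)/((2\pi i)^2F(u))\bigr)$. Rearranging yields the stated formula.

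The only delicate points are bookkeeping. One is confirming that $\omega_1,\omega_3$ count only odd primes, which holds automatically. The other is making sure the sign ambiguity in $f(u)$ and the choice of generator $\gamma^-$ versus $\gamma_C^-$ from Lemma~\ref{reg-lemma} introduce no extra power of $2$. The latter is the step I would double-check: the normalisation $\DDD_{1,2}=\frac{1}{2\pi i}H^1_B(E_u(\C),\Z)^-$ in Proposition~\ref{Dint} must match the integration against a generator $\gamma^-$ of $H_1(E_u(\C),\Z)^-$. Here Lemma~\ref{reg-lemma} already absorbs the factor $2$ coming from the $2$-to-$1$ map $\gamma^-\mapsto\gamma_C^-$ and the relation $-2\alpha=\phi^*\{X,Y\}$.
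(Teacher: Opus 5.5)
Your proposal is correct and is the paper's argument: take $\beta_u=\alpha$ in the displayed $\ell=2$ identity preceding Assumption~\ref{assumptions}, use $f(u)=\pm(2\pi i)^2F(u)$ and $\iota_{u,2}\in\{1,2\}$, and insert Propositions~\ref{2tors}, \ref{tams}, \ref{taminf} and~\ref{2tam2}. This gives the right-hand side $2\omega_1(u)+\omega_3(u)+\omega(u^2-1)+s_u+1+1-4$, exactly as you computed. Your check of the factor~$2$ is also right: in Lemma~\ref{reg-lemma}, $\phi_*\gamma^-=2\gamma_C^-$ and $\phi^*\{X,Y\}=-2\alpha$ cancel, so $f(u)$ carries no extra power of~$2$.
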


\begin{remar} \label{yet-another-label}
The 2-torsion subgroup of~$ H_f^1(\Q, E_u[2^\infty](-1) ) $, for~$ u $ as in Proposition~\ref{BK2},
is finite (and explicitly computable for a given~$ u $) by Corollary~\ref{2torscor}.
Let~$ s_u' \ge 0 $ be such that~$ 2^{s_u'} $ is its order.
Without assuming any conjectures we have~$ s_u = 0 $ if and
only if~$ s_u' = 0 $ by Remark~\ref{selmertrivial}. 
Obviously~$ s_u \ge s_u' $, and~$ s_u = s_u' $ if and only if~2
is an exponent for~$ H_f^1(\Q, E_u[2^\infty](-1) ) $.
\end{remar}

\section{Numerical experiments with \texorpdfstring{$L(E_u, 2)$}{L(Eu,2)}} \label{num-sec}

The examples contained in Tables~\ref{numericstable1}, \ref{numericstable2}, \ref{numericstable3}, and~\ref{extratable},
were computed using pari/GP~\cite{PARI}.
By choice, for the given~$ u $ the conditions in Proposition~\ref{BK2} are satisfied,
and the first three tables contain the results for all such~$ u $ with $ u \le 1100 $.
(The results for all such~$ u $ with~$ u\le 25000 $ are available
 at~\url{https://www.few.vu.nl/~jeu/} .)
Because of Proposition~\ref{Vu-prop}, the~$ u $ for which~$ u(u^2-1)/4 $
is \emph{not} squarefree have been indicated with~$ {}^{\natural} $.

\begin{table}[t]\caption{\label{numericstable1}Data for Proposition~\ref{BK2} with~$   1 \le u \le  348 $.}
\resizebox{320pt}{!}{
\begin{tabular}{|c|c|c|c|c|c|c|c|c|}
\hline \vph
 $u$ &  $u/4$ & $u-1$ & $u+1$ & $ - N_u q_u $ & $ \hat s_u - \ord_2(\iota_{u,2})$ & $ s_u' $ & $ m_u - 1 $
 \\ \hline \vphs
4 &  $ 1 $ &  $ 3 $ &  $ 5 $ &  $ 11^{-1} $ & 0 & 0 & 1\rlap{${}^+$}
 \\ \hline \vphs 
12 &  $ 3 $ &  $ 11 $ &  $ 13 $ &  $ 2 $ & 0 & 0 & 1\rlap{${}^+$}
 \\ \hline \vphs 
20 &  $ 5 $ &  $ 19 $ &  $ 3 \cdot 7 $ &  $ 2^{3} $ & 0 & 0 & 0\rlap{*}
 \\ \hline \vphs 
28\rlap{${}^\natural$} &  $ 7 $ &  $ 3^{3} $ &  $ 29 $ &  $ 2 $ & 0 & 0 & 1\rlap{${}^+$}
 \\ \hline \vphs 
52 &  $ 13 $ &  $ 3 \cdot 17 $ &  $ 53 $ &  $ 2^{5} \cdot 3 $ & 2 & 1 & 1
 \\ \hline \vphs 
60 &  $ 3 \cdot 5 $ &  $ 59 $ &  $ 61 $ &  $ 2^{3} \cdot 29 $ & 0 & 0 & 1\rlap{${}^+$}
 \\ \hline \vphs 
68 &  $ 17 $ &  $ 67 $ &  $ 3 \cdot 23 $ &  $ 2^{3} \cdot 3^{3} $ & 0 & 0 & 0\rlap{*}
 \\ \hline \vphs 
84 &  $ 3 \cdot 7 $ &  $ 83 $ &  $ 5 \cdot 17 $ &  $ 2^{5} \cdot 17 $ & 2 & 1 & 1
 \\ \hline \vphs 
92 &  $ 23 $ &  $ 7 \cdot 13 $ &  $ 3 \cdot 31 $ &  $ 2^{5} \cdot 3 \cdot 5 $ & 2 & 2 & 0\rlap{*}
 \\ \hline \vphs 
108\rlap{${}^\natural$} &  $ 3^{3} $ &  $ 107 $ &  $ 109 $ &  $ 2 \cdot 3 \cdot 19 $ & 0 & 0 & 1\rlap{${}^+$}
 \\ \hline \vphs 
124\rlap{${}^\natural$} &  $ 31 $ &  $ 3 \cdot 41 $ &  $ 5^{3} $ &  $ 2^{4} \cdot 3 $ & 2 & 1 & 1
 \\ \hline \vphs 
132 &  $ 3 \cdot 11 $ &  $ 131 $ &  $ 7 \cdot 19 $ &  $ 2^{6} \cdot 3^{3} $ & 3 & 1 & 0
 \\ \hline \vphs 
140 &  $ 5 \cdot 7 $ &  $ 139 $ &  $ 3 \cdot 47 $ &  $ 2^{4} \cdot 113 $ & 0 & 0 & 0\rlap{*}
 \\ \hline \vphs 
156 &  $ 3 \cdot 13 $ &  $ 5 \cdot 31 $ &  $ 157 $ &  $ 2^{4} \cdot 3^{2} \cdot 23 $ & 0 & 0 & 1\rlap{${}^+$}
 \\ \hline \vphs 
164 &  $ 41 $ &  $ 163 $ &  $ 3 \cdot 5 \cdot 11 $ &  $ 2^{10} \cdot 3 $ & 6 & 1 & 0
 \\ \hline \vphs 
188\rlap{${}^\natural$} &  $ 47 $ &  $ 11 \cdot 17 $ &  $ 3^{3} \cdot 7 $ &  $ 2^{5} \cdot 13 $ & 2 & 2 & 0\rlap{*}
 \\ \hline \vphs 
204 &  $ 3 \cdot 17 $ &  $ 7 \cdot 29 $ &  $ 5 \cdot 41 $ &  $ 2^{10} \cdot 7 $ & 5 & 1 & 1
 \\ \hline \vphs 
212 &  $ 53 $ &  $ 211 $ &  $ 3 \cdot 71 $ &  $ 2^{3} \cdot 3^{2} \cdot 73 $ & 0 & 0 & 0\rlap{*}
 \\ \hline \vphs 
220 &  $ 5 \cdot 11 $ &  $ 3 \cdot 73 $ &  $ 13 \cdot 17 $ &  $ 2^{9} \cdot 13 $ & 4 & 1 & 1
 \\ \hline \vphs 
228 &  $ 3 \cdot 19 $ &  $ 227 $ &  $ 229 $ &  $ 2^{2} \cdot 3 \cdot 5^{4} $ & 0 & 0 & 1\rlap{${}^+$}
 \\ \hline \vphs 
236 &  $ 59 $ &  $ 5 \cdot 47 $ &  $ 3 \cdot 79 $ &  $ 2^{8} \cdot 3 \cdot 11 $ & 5 & 2 & 0
 \\ \hline \vphs 
268 &  $ 67 $ &  $ 3 \cdot 89 $ &  $ 269 $ &  $ 2^{4} \cdot 3 \cdot 5 \cdot 43 $ & 2 & 1 & 1
 \\ \hline \vphs 
284 &  $ 71 $ &  $ 283 $ &  $ 3 \cdot 5 \cdot 19 $ &  $ 2^{5} \cdot 449 $ & 2 & 2 & 0\rlap{*}
 \\ \hline \vphs 
292 &  $ 73 $ &  $ 3 \cdot 97 $ &  $ 293 $ &  $ 2^{5} \cdot 419 $ & 2 & 2 & 1\rlap{${}^+$}
 \\ \hline \vphs 
308 &  $ 7 \cdot 11 $ &  $ 307 $ &  $ 3 \cdot 103 $ &  $ 2^{8} \cdot 3^{2} \cdot 7 $ & 5 & 1 & 0
 \\ \hline \vphs 
340 &  $ 5 \cdot 17 $ &  $ 3 \cdot 113 $ &  $ 11 \cdot 31 $ &  $ 2^{6} \cdot 3 \cdot 7 \cdot 17 $ & 0 & 0 & 0\rlap{*}
 \\ \hline \vphs 
348 &  $ 3 \cdot 29 $ &  $ 347 $ &  $ 349 $ &  $ 2^{3} \cdot 5 \cdot 7 \cdot 97 $ & 0 & 0 & 1\rlap{${}^+$}
 \\ \hline  
\end{tabular}%
}%
\end{table}

\begin{table}[t]\caption{\label{numericstable2}Data for Proposition~\ref{BK2} with~$ 356 \le u \le  732 $.}
\resizebox{320pt}{!}{
\begin{tabular}{|c|c|c|c|c|c|c|c|c|}
\hline \vph
 $u$ &  $u/4$ & $u-1$ & $u+1$ & $ - N_u q_u $ & $ \hat s_u - \ord_2(\iota_{u,2})$ & $ s_u' $ & $ m_u - 1 $
 \\ \hline \vphs
356 &  $ 89 $ &  $ 5 \cdot 71 $ &  $ 3 \cdot 7 \cdot 17 $ &  $ 2^{7} \cdot 3^{2} \cdot 23 $ & 2 & 2 & 0\rlap{*}
 \\ \hline \vphs 
372 &  $ 3 \cdot 31 $ &  $ 7 \cdot 53 $ &  $ 373 $ &  $ 2^{6} \cdot 7 \cdot 79 $ & 3 & 1 & 1
 \\ \hline \vphs 
380 &  $ 5 \cdot 19 $ &  $ 379 $ &  $ 3 \cdot 127 $ &  $ 2^{4} \cdot 3^{3} \cdot 7 \cdot 11 $ & 0 & 0 & 0\rlap{*}
 \\ \hline \vphs 
412 &  $ 103 $ &  $ 3 \cdot 137 $ &  $ 7 \cdot 59 $ &  $ 2^{7} \cdot 3^{2} \cdot 31 $ & 4 & 2 & 0
 \\ \hline \vphs 
420 &  $ 3 \cdot 5 \cdot 7 $ &  $ 419 $ &  $ 421 $ &  $ 2^{4} \cdot 3^{2} \cdot 13 \cdot 29 $ & 0 & 0 & 1\rlap{${}^+$}
 \\ \hline \vphs 
428 &  $ 107 $ &  $ 7 \cdot 61 $ &  $ 3 \cdot 11 \cdot 13 $ &  $ 2^{8} \cdot 3^{2} \cdot 17 $ & 4 & 3 & 0
 \\ \hline \vphs 
436 &  $ 109 $ &  $ 3 \cdot 5 \cdot 29 $ &  $ 19 \cdot 23 $ &  $ 2^{7} \cdot 359 $ & 2 & 2 & 0\rlap{*}
 \\ \hline \vphs 
444 &  $ 3 \cdot 37 $ &  $ 443 $ &  $ 5 \cdot 89 $ &  $ 2^{4} \cdot 5 \cdot 17 \cdot 43 $ & 0 & 0 & 1\rlap{${}^+$}
 \\ \hline \vphs 
452 &  $ 113 $ &  $ 11 \cdot 41 $ &  $ 3 \cdot 151 $ &  $ 2^{6} \cdot 3 \cdot 239 $ & 2 & 1 & 0
 \\ \hline \vphs 
460\rlap{${}^\natural$} &  $ 5 \cdot 23 $ &  $ 3^{3} \cdot 17 $ &  $ 461 $ &  $ 2^{4} \cdot 3^{2} \cdot 47 $ & 0 & 0 & 1\rlap{${}^+$}
 \\ \hline \vphs 
492 &  $ 3 \cdot 41 $ &  $ 491 $ &  $ 17 \cdot 29 $ &  $ 2^{4} \cdot 3 \cdot 5 \cdot 13 \cdot 23 $ & 0 & 0 & 1\rlap{${}^+$}
 \\ \hline \vphs 
500\rlap{${}^\natural$} &  $ 5^{3} $ &  $ 499 $ &  $ 3 \cdot 167 $ &  $ 2^{3} \cdot 3 \cdot 11^{2} $ & 0 & 0 & 0\rlap{*}
 \\ \hline \vphs 
516 &  $ 3 \cdot 43 $ &  $ 5 \cdot 103 $ &  $ 11 \cdot 47 $ &  $ 2^{8} \cdot 3 \cdot 5^{3} $ & 4 & 2 & 0
 \\ \hline \vphs 
556 &  $ 139 $ &  $ 3 \cdot 5 \cdot 37 $ &  $ 557 $ &  $ 2^{5} \cdot 5 \cdot 13 \cdot 47 $ & 2 & 2 & 1\rlap{${}^+$}
 \\ \hline \vphs 
564 &  $ 3 \cdot 47 $ &  $ 563 $ &  $ 5 \cdot 113 $ &  $ 2^{5} \cdot 3^{3} \cdot 5 \cdot 29 $ & 2 & 1 & 1
 \\ \hline \vphs 
572 &  $ 11 \cdot 13 $ &  $ 571 $ &  $ 3 \cdot 191 $ &  $ 2^{8} \cdot 359 $ & 4 & 1 & 0
 \\ \hline \vphs 
580 &  $ 5 \cdot 29 $ &  $ 3 \cdot 193 $ &  $ 7 \cdot 83 $ &  $ 2^{6} \cdot 1667 $ & 0 & 0 & 0\rlap{*}
 \\ \hline \vphs 
596 &  $ 149 $ &  $ 5 \cdot 7 \cdot 17 $ &  $ 3 \cdot 199 $ &  $ 2^{11} \cdot 5 \cdot 11 $ & 6 & 2 & 0
 \\ \hline \vphs 
620\rlap{${}^\natural$} &  $ 5 \cdot 31 $ &  $ 619 $ &  $ 3^{3} \cdot 23 $ &  $ 2^{4} \cdot 3^{2} \cdot 5 \cdot 19 $ & 0 & 0 & 0\rlap{*}
 \\ \hline \vphs 
628 &  $ 157 $ &  $ 3 \cdot 11 \cdot 19 $ &  $ 17 \cdot 37 $ &  $ 2^{9} \cdot 3 \cdot 7 \cdot 11 $ & 4 & 3 & 1
 \\ \hline \vphs 
644 &  $ 7 \cdot 23 $ &  $ 643 $ &  $ 3 \cdot 5 \cdot 43 $ &  $ 2^{9} \cdot 3^{3} \cdot 11 $ & 5 & 2 & 0
 \\ \hline \vphs 
652 &  $ 163 $ &  $ 3 \cdot 7 \cdot 31 $ &  $ 653 $ &  $ 2^{7} \cdot 3^{2} \cdot 113 $ & 4 & 2 & 1
 \\ \hline \vphs 
660 &  $ 3 \cdot 5 \cdot 11 $ &  $ 659 $ &  $ 661 $ &  $ 2^{4} \cdot 41 \cdot 307 $ & 0 & 0 & 1\rlap{${}^+$}
 \\ \hline \vphs 
668 &  $ 167 $ &  $ 23 \cdot 29 $ &  $ 3 \cdot 223 $ &  $ 2^{7} \cdot 3^{2} \cdot 5^{3} $ & 4 & 2 & 0
 \\ \hline \vphs 
708 &  $ 3 \cdot 59 $ &  $ 7 \cdot 101 $ &  $ 709 $ &  $ 2^{5} \cdot 3 \cdot 2113 $ & 2 & 1 & 1
 \\ \hline \vphs 
716 &  $ 179 $ &  $ 5 \cdot 11 \cdot 13 $ &  $ 3 \cdot 239 $ &  $ 2^{8} \cdot 3^{2} \cdot 89 $ & 4 & 3 & 0
 \\ \hline \vphs 
732 &  $ 3 \cdot 61 $ &  $ 17 \cdot 43 $ &  $ 733 $ &  $ 2^{4} \cdot 14479 $ & 0 & 0 & 1\rlap{${}^+$}
 \\ \hline  
\end{tabular}%
}%
\end{table}

\begin{table}[t]\caption{\label{numericstable3}Data for Proposition~\ref{BK2} with~$ 740 \le u \le 1092 $.}
\resizebox{320pt}{!}{
\begin{tabular}{|c|c|c|c|c|c|c|c|c|}
\hline \vph
 $u$ &  $u/4$ & $u-1$ & $u+1$ & $ - N_u q_u $ & $ \hat s_u - \ord_2(\iota_{u,2})$ & $ s_u' $ & $ m_u - 1 $
 \\ \hline \vphs
740 &  $ 5 \cdot 37 $ &  $ 739 $ &  $ 3 \cdot 13 \cdot 19 $ &  $ 2^{6} \cdot 5 \cdot 11 \cdot 61 $ & 0 & 0 & 0\rlap{*}
 \\ \hline \vphs 
756\rlap{${}^\natural$} &  $ 3^{3} \cdot 7 $ &  $ 5 \cdot 151 $ &  $ 757 $ &  $ 2^{8} \cdot 3 \cdot 43 $ & 5 & 1 & 1
 \\ \hline \vphs 
772 &  $ 193 $ &  $ 3 \cdot 257 $ &  $ 773 $ &  $ 2^{9} \cdot 3^{2} \cdot 7^{2} $ & 6 & 2 & 1
 \\ \hline \vphs 
780 &  $ 3 \cdot 5 \cdot 13 $ &  $ 19 \cdot 41 $ &  $ 11 \cdot 71 $ &  $ 2^{13} \cdot 3 \cdot 13 $ & 6 & 1 & 0
 \\ \hline \vphs 
788 &  $ 197 $ &  $ 787 $ &  $ 3 \cdot 263 $ &  $ 2^{3} \cdot 3 \cdot 11 \cdot 881 $ & 0 & 0 & 0\rlap{*}
 \\ \hline \vphs 
796 &  $ 199 $ &  $ 3 \cdot 5 \cdot 53 $ &  $ 797 $ &  $ 2^{5} \cdot 3^{2} \cdot 7 \cdot 127 $ & 2 & 2 & 1\rlap{${}^+$}
 \\ \hline \vphs 
804 &  $ 3 \cdot 67 $ &  $ 11 \cdot 73 $ &  $ 5 \cdot 7 \cdot 23 $ &  $ 2^{9} \cdot 641 $ & 4 & 3 & 0
 \\ \hline \vphs 
812 &  $ 7 \cdot 29 $ &  $ 811 $ &  $ 3 \cdot 271 $ &  $ 2^{4} \cdot 3^{3} \cdot 631 $ & 0 & 0 & 0\rlap{*}
 \\ \hline \vphs 
836\rlap{${}^\natural$} &  $ 11 \cdot 19 $ &  $ 5 \cdot 167 $ &  $ 3^{3} \cdot 31 $ &  $ 2^{8} \cdot 3^{3} \cdot 5 $ & 4 & 2 & 0
 \\ \hline \vphs 
852 &  $ 3 \cdot 71 $ &  $ 23 \cdot 37 $ &  $ 853 $ &  $ 2^{5} \cdot 3^{3} \cdot 19 \cdot 23 $ & 2 & 1 & 1
 \\ \hline \vphs 
860 &  $ 5 \cdot 43 $ &  $ 859 $ &  $ 3 \cdot 7 \cdot 41 $ &  $ 2^{7} \cdot 3 \cdot 5^{3} \cdot 7 $ & 2 & 1 & 0
 \\ \hline \vphs 
876\rlap{${}^\natural$} &  $ 3 \cdot 73 $ &  $ 5^{3} \cdot 7 $ &  $ 877 $ &  $ 2^{4} \cdot 5^{2} \cdot 43 $ & 0 & 0 & 1\rlap{${}^+$}
 \\ \hline \vphs 
884 &  $ 13 \cdot 17 $ &  $ 883 $ &  $ 3 \cdot 5 \cdot 59 $ &  $ 2^{6} \cdot 23 \cdot 239 $ & 0 & 0 & 0\rlap{*}
 \\ \hline \vphs 
916 &  $ 229 $ &  $ 3 \cdot 5 \cdot 61 $ &  $ 7 \cdot 131 $ &  $ 2^{7} \cdot 3^{3} \cdot 109 $ & 2 & 2 & 0\rlap{*}
 \\ \hline \vphs 
940 &  $ 5 \cdot 47 $ &  $ 3 \cdot 313 $ &  $ 941 $ &  $ 2^{4} \cdot 3 \cdot 11 \cdot 809 $ & 0 & 0 & 1\rlap{${}^+$}
 \\ \hline \vphs 
948 &  $ 3 \cdot 79 $ &  $ 947 $ &  $ 13 \cdot 73 $ &  $ 2^{5} \cdot 113 \cdot 139 $ & 2 & 1 & 1
 \\ \hline \vphs 
956 &  $ 239 $ &  $ 5 \cdot 191 $ &  $ 3 \cdot 11 \cdot 29 $ &  $ 2^{10} \cdot 3^{2} \cdot 7^{2} $ & 6 & 3 & 0
 \\ \hline \vphs 
972\rlap{${}^\natural$} &  $ 3^{5} $ &  $ 971 $ &  $ 7 \cdot 139 $ &  $ 2^{5} \cdot 3 \cdot 5 \cdot 13 $ & 3 & 1 & 0
 \\ \hline \vphs 
988 &  $ 13 \cdot 19 $ &  $ 3 \cdot 7 \cdot 47 $ &  $ 23 \cdot 43 $ &  $ 2^{8} \cdot 13 \cdot 139 $ & 2 & 2 & 0\rlap{*}
 \\ \hline \vphs 
996 &  $ 3 \cdot 83 $ &  $ 5 \cdot 199 $ &  $ 997 $ &  $ 2^{5} \cdot 3^{2} \cdot 2333 $ & 2 & 1 & 1
 \\ \hline \vphs 
1004 &  $ 251 $ &  $ 17 \cdot 59 $ &  $ 3 \cdot 5 \cdot 67 $ &  $ 2^{8} \cdot 3^{2} \cdot 223 $ & 4 & 3 & 0
 \\ \hline \vphs 
1012 &  $ 11 \cdot 23 $ &  $ 3 \cdot 337 $ &  $ 1013 $ &  $ 2^{5} \cdot 3 \cdot 4993 $ & 2 & 1 & 1
 \\ \hline \vphs 
1020 &  $ 3 \cdot 5 \cdot 17 $ &  $ 1019 $ &  $ 1021 $ &  $ 2^{13} \cdot 3^{4} $ & 8 & 1 & 1
 \\ \hline \vphs 
1028\rlap{${}^\natural$} &  $ 257 $ &  $ 13 \cdot 79 $ &  $ 3 \cdot 7^{3} $ &  $ 2^{7} \cdot 3^{4} $ & 3 & 1 & 0
 \\ \hline \vphs 
1060 &  $ 5 \cdot 53 $ &  $ 3 \cdot 353 $ &  $ 1061 $ &  $ 2^{6} \cdot 47 \cdot 197 $ & 1 & 1 & 1\rlap{${}^+$}
 \\ \hline \vphs 
1068 &  $ 3 \cdot 89 $ &  $ 11 \cdot 97 $ &  $ 1069 $ &  $ 2^{9} \cdot 3^{3} \cdot 7^{2} $ & 5 & 2 & 1
 \\ \hline \vphs 
1092 &  $ 3 \cdot 7 \cdot 13 $ &  $ 1091 $ &  $ 1093 $ &  $ 2^{4} \cdot 45971 $ & 0 & 0 & 1\rlap{${}^+$}
 \\ \hline  
\end{tabular}%
}%
\end{table}

Each number~$\frac{L(E_u, 2)}{(2\pi i)^2 F(u)}$ was computed with~35 relevant
decimal places, then replaced by the obvious rational approximation~$ q_u $.
Determining~$ q_u $ was made easier by considering~$ N_u \frac{L(E_u, 2)}{(2\pi i)^2 F(u)} $, which by the functional equation of the~$ L $-function equals~$ \pm \frac{L'(E_u,0)}{F(u)} $. Here $ N_u$ is the conductor of $ E_u $, equal to the product of the (odd) prime numbers dividing $\frac14 u (u^2-1) $ by Proposition~\ref{minimal}.
This was close to a negative integer except for~$ u = 4 $, where
it was close to $ - \frac{1}{11} $.

\begin{remar} We should emphasise that Remark~\ref{remark:u=4} actually \emph{proved}
that~$ q_4 $ is rational and that~$ - N_4 q_4 = \frac{1}{11} $,
where~$N_4=15$. The Bloch-Kato conjecture implies an interpretation
for the factor~11 in the denominator. For this, consider its
statement as immediately before Assumption~\ref{assumptions},
for~$ \ell = 11 $, $ u = 4 $, and~$\beta_4=\alpha$ so that $f(4)=\pm(2\pi i)^2F(4)$. On the right-hand side, the $11$-parts of the Tamagawa factors are all integral, indeed trivial. For the triviality of $\mathrm{Tam}^0_{11,\omega_{11}}(T_{11}(2))$, see Remark~\ref{BK4.1iii}, replacing  $p=2$ by the sufficiently large $p=11$ (also of good reduction). For $p=3, 5$, see the proof of Proposition \ref{tams} (2) and (3), noting that~$11\nmid \ord_p(\Delta)$. The terms in the denominator are trivial since $E_4$ does not have a rational $11$-isogeny
(like all elliptic curves over $\Q$ with three exceptions of conductor~121;
see \cite[p.79]{bi-ku:mfo}). Hence the only way to account for the $11$ in the denominator of~$ - N_4q_4$, in
accordance with the conjecture, is for~$11$ to divide~$\iota_{4,11}$. In other words, for~$u=4$ we would have to have~$ \reg_{11}(\alpha) $ divisible by~$11$ in~$ H^1(\Q, \het^1(E_{u,\Qbar}, \Z_{11}(2)))_\tf $.
\end{remar}

We have given~$ q_u $ implicitly in the tables by listing the factorisation of~$ - N_u q_u $ (note the sign).
Because~$ N_u $ is odd,
the exponent of~2 in~$ - N_u q_u $ equals~$ \ord_2(q_u) $. We have also listed
the prime factorisations of~$ u/4 $, $ u-1 $ and~$ u+1 $, and the
resulting value for~$ m_u $ as in Corollary~\ref{new-index-cor},
with~$ \ord_2(\iota_{u,2}) \le m_u -1 $.
We recall from Remark~\ref{2torsrem} that~$ m_u = 1 $
or~2 for our~$ u $. We also listed
\begin{equation} \label{Sudiff}
\hat s_u-\ord_2(\iota_{u,2}) := \ord_2(q_u) - \omega_3(u) - 2 \omega_1(u) - \omega(u^2-1) + 2,
\end{equation}
where $\hat s_u$ is the value of $s_u$ predicted by Proposition~\ref{BK2}.

In our data, we always have~$ \hat s_u-\ord_2(\iota_{u,2}) $ greater
than or equal to~$ s_u' $, so having~$ \iota_{u,2} = 1 $ would
always be compatible with the prediction in Proposition~\ref{BK2}.
In fact, there are various cases%
\footnote{In the tables, those that have~$ s_u' = 0 $ together with a marking by~$ 1^+ $, as in the next paragraph.}
where~$ s_u' = 0 $, hence~$ s_u = 0 $,
and the prediction is that~$ \iota_{u,2} = 1 $, whereas~$ m_u = 2 $.
But although there is no evidence that $ \iota_{u,2} = 2 $ for any
of our examples, we cannot exclude the possibility.

In the tables, we included some information on special situations in the column for~$ m_u-1 $.
If~$ m_u-1=0$ (so that~$ \iota_{u,2} = 1 $) and additionally~$ \hat s_u = s_u' $, we write~``$ 0^* $''.
If~$ m_u-1=1$ (so that~$ \iota_{u,2} = 1 $ or~2) and assuming~$ \iota_{u,2} = 1 $ leads to~$ \hat s_u = s_u' $, we write ``$1^+$''.
In the first case the prediction is then that~$ H_f^1(\Q, E_u[2^\infty](-1) ) $ is 2-torsion, and
is computed exactly by Corollary~\ref{2torscor});
in the second case the same holds under the assumption~$ \iota_{u,2} = 1 $.

We observe that if~$ s_u' = 1 $ then~$ H_f^1(\Q, E_u[2^\infty](-1) ) $ 
is either isomorphic to~$ \Q_2/\Z_2 $ or cyclic of finite order~$ 2^{s_u} $.
It appears that in the latter case the predicted order can be large.
We have~$ s_u' = 1 $ and~$ \iota_{u,2} = 1 $
for example, for~$ u = 164 $, where the prediction is that~$ s_u = 6 $,
and for~$ u = 5612 $, where the prediction is that~$ s_u = 11 $.
If~$ s_u' = 1 $ and~$ m_u = 2 $ then we have to modify
this because the prediction in the tables is for~$ s_u - \ord_2(\iota_{u,2}) $,
and~$ \iota_{u,2} $ could equal~1 or~2. 
For example, for~$ u = 2140 $ the prediction is that~$ H_f^1(\Q, E_u[2^\infty](-1) ) $ 
is cyclic of order~$ 2^9 $ or~$ 2^{10} $.

The cumulative maximum of $ s_u' $ seems to grow
slowly with~$ u $. We have~$ s_u' = 5 $ for the first time
at~$ u = 5116 $, $ s_u' = 6 $ at~$ u = 47 356 $,
$ s_u' = 7 $ at~$ u = 443 444 $, 
$ s_u' = 8 $ at~$ u = 1 983 164 $, 
$ s_u' = 9 $ at~$ u = 13 816 804 $,
and~$ s_u' = 10 $ at~$ u = 105 514 564 $.
(These first times remain the same if we consider only~$ u $
for which~$ u(u^2-1)/4 $ is squarefree as in Proposition~\ref{Vu-prop}.)

We can rewrite~\eqref{Sudiff} into the prediction
\begin{equation} \label{prediction}
\ord_2(q_u) + \ord_2(\iota_{u,2}) + 2 = 2 \omega_1(u) + \omega_3(u) + \omega(u^2-1) + s_u
\end{equation}
and consider when the value~$ V_u $ of the right-hand side (which
does not depend on any conjectures but could be infinity) is small.
In order to avoid complications, we assume here that~$ (u/4)(u^2-1) $
is squarefree.

\begin{prop} \label{Vu-prop}
For~$ u \ge 4 $ an integer such that~$ 4 || u $ and~$ (u/4)(u^2-1) $
is squarefree, let~$ V_u $ be the value of the right-hand
side of~\eqref{prediction}. Then~$ V_u \ge 2 $, and
\begin{itemize}
\item
$ V_u = 2 $ if and only if~$ u = 4 $,

\item
$ V_u = 3 $ if and only if~$ u = 12 $,

\item
$ V_u = 4 $ if and only if $ u -1 $ and $ u+1 $ are primes, and $ u= 12 p $
for a prime number $ p $ congruent to 3 modulo 4.
\end{itemize}
\end{prop}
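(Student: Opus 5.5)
Since $s_u \ge 0$, and $s_u=0$ whenever it is needed in the characterisations below, the plan is to bound the purely arithmetic quantity $W_u := 2\omega_1(u)+\omega_3(u)+\omega(u^2-1)$ from below. Here $\omega_1(u),\omega_3(u)$ count odd primes only, so they are the same as for $u/4$. I would first show $W_u\ge 2$ and pin down the cases $W_u=2,3,4$. For the three characterisations I would then check separately that $s_u=0$ in exactly those cases, using Corollary~\ref{2torscor}, or else that $V_u\le 4$ already forces $W_u\le 4$ with $s_u$ absorbing the rest.

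\textbf{Lower bound.} Since $4\parallel u$, both $u-1$ and $u+1$ are odd. They are coprime, and since $u\ge 4$ each is at least $3$, so each has an odd prime factor and $\omega(u^2-1)\ge 2$. Hence $W_u\ge 2$, with equality iff $u/4$ has no odd prime factor and $u\pm1$ are prime powers. Squarefreeness of $(u/4)(u^2-1)$ gives $u/4=1$, i.e.\ $u=4$, and indeed $3$ and $5$ are prime.

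\textbf{Small values.} I would make repeated use of one congruence: $u\equiv 4 \pmod 8$ gives $u-1\equiv 3$ and $u+1\equiv 5 \pmod 8$. In particular, if $u/4=1$ then $u=4$, so for $u>4$ the factor $u/4$ contributes at least $1$ to $W_u$.
\begin{itemize}
\item $W_u=3$ forces $\omega_1(u)=0$ and $\omega_3(u)=1$, together with $u\pm 1$ prime. With $u/4$ a single prime $p\equiv 3 \pmod 4$, the prime $u-1=4p-1$ must avoid divisibility by $3$, which forces $p=3$ (since $u-1=4p-1\equiv p-1 \pmod 3$ and $u+1=4p+1\equiv p+1 \pmod 3$, one of $u\pm1$ is divisible by $3$ unless $p=3$). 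Hence $u=12$, where $11$ and $13$ are prime.
\item The alternative $W_u=3$ with $\omega(u/4)=0$ would need $\omega(u^2-1)=3$ and $u=4$, which is impossible.
\item $W_u=4$ has the following cases: (a) $u/4=p$ with $p\equiv 3 \pmod 4$ and $\omega(u^2-1)=3$; (b) $u/4=p\equiv 1 \pmod 4$ with $u\pm1$ prime; (c) $u/4$ a product of two primes $\equiv 3 \pmod 4$ with $u\pm1$ prime; (d) $u/4=1$, which is excluded.
\item In case (b), $p\ne 3$ gives $3\mid u^2-1$, so $u\pm1$ cannot both be prime unless the one divisible by $3$ equals $3$, i.e.\ $u=4$. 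This contradicts $p\equiv 1 \pmod 4$, so (b) is excluded.
\item In case (c), $3\nmid u$ is impossible by the same mod-$3$ argument, so $u/4=3q$ with $q\equiv 3 \pmod 4$ and $u\pm1$ prime. This is exactly the stated form $u=12p$.
\item In case (a) with $p\ne 3$, again $3\mid u^2-1$, and one has to rule this out or find that $s_u>0$. This is the awkward branch.
\end{itemize}

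\textbf{Main obstacle.} The real difficulty is the interaction with $s_u$, which is only controlled through Corollary~\ref{2torscor}: in case (a) and in the branches where $W_u<V_u$, we need to know that the $2$-torsion of the Selmer group is nontrivial. I expect the honest route to go as follows. When $u=12p$ with $u\pm1$ prime, the corollary gives $S=\{u-1,u+1\}$ and $S'=\emptyset$ (since $p\equiv 3 \pmod 4$ and $3\equiv 3 \pmod 4$). Then $D\in\{1,u-1,u+1,u^2-1\}$ with $D\equiv 1 \pmod 8$, and since $u-1\equiv 3$ and $u+1\equiv 5 \pmod 8$ this forces $D=1$. So $s_u'=0$, hence $s_u=0$ and $V_u=4$; the same computation handles $u=4$ and $u=12$.

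For the converse, I would show that $V_u\le 4$ forces $W_u\le 4$, and that the case (a) configuration with $p\ne 3$ would need $W_u=4$ and $s_u=0$. I would try to exhibit a nontrivial pair $(D,1)$ there using quadratic reciprocity among the three primes dividing $u^2-1$, though I am unsure this always succeeds. If it fails, the statement may implicitly rely on $3\mid u$, which I would try to derive from the condition $D\equiv 1 \pmod 8$ combined with the residue conditions.
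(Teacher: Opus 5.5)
You organise the argument the same way the paper does. You use $V_u\ge W_u$, show $W_u=2$ only for $u=4$, and show $W_u=3$ only for $u=12$ by the mod~$3$ argument. For $u\ne 4,12$ you then need $W_u=4$ and $s_u=0$. Your cases (b) and (c) are handled correctly, as is the Selmer computation via Corollary~\ref{2torscor} for $u=4$, $12$ and $12p$.

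The gap is case (a): $u/4=p\equiv 3 \pmod 4$ prime, with $\omega(u^2-1)=3$. You leave it open. It is not vacuous: for $u=268$ we have $u^2-1=3\cdot 89\cdot 269$ and $W_u=4$. Until it is excluded, the ``only if'' direction of the $V_u=4$ characterisation is unproved. Your fallback, deriving $3\mid u$, cannot work. In case (a) we have $p\neq 3$, since $p=3$ gives $u=12$ and $\omega(143)=2$, so in fact $3\nmid u$ there.

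The missing step is a simple count, and no quadratic reciprocity is needed. In case (a) we have $S'=\emptyset$, so $D'=1$. The first bullet of Corollary~\ref{2torscor} then holds automatically and the second is vacuous. The only remaining condition is $D\equiv 1 \pmod 8$. The eight squarefree divisors of $u^2-1$ form a group $\F_2^3$, and $D\mapsto D \bmod 8$ is a homomorphism to $(\Z/8\Z)^\times$ because odd squares are $1$ modulo~$8$. Since $(\Z/8\Z)^\times$ has order $4$, the kernel contains some $D\neq 1$; for $u=268$ this is $D=89$. Hence $s_u'\ge 1$, so $s_u\ge 1$ and $V_u\ge 5$. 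This is exactly how the paper disposes of this case, which is its case~(2).
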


\begin{proof}
Because~$ u^2 - 1 = (u-1) (u+1) $ we have~$ \omega(u) \ge 2 $,
and because~$ 4 ||u $ we can have~$ V_u = 2 $ only if~$ u = 4 $ and~$ s_u = 0 $.
Corollary~\ref{2torscor} and Remark~\ref{selmertrivial} show
that for~$ u = 4 $ we indeed have~$ s_u = 0 $, so this does occur.
For~$ u \ne 4 $ we have~$ \omega_1(u) + \omega_3(u) \ge 1 $,
so~$ V_u = 3 $  is equivalent to~$ \omega(u^2-1) = 2 $, $ \omega_1(u) = 0 $, $ \omega_3(u) = 1 $ and~$ s_u = 0 $.
Then~$ u = 12 $ because~$ u/4 $, $ u-1 $ and~$ u+1 $ must be prime numbers
that represent the residue classes of integers modulo~3, and~$ u \ne 4 $.
For~$ u = 12 $ we have~$ s_u = 0 $ by Corollary~\ref{2torscor} and Remark~\ref{selmertrivial},
hence~$ V_u = 3 $.

Because~$ u \ne 4 $ or~12 implies~$ \omega_1(u) + \omega_3(u) + \omega(u^2-1) \ne 2 $
or~3, for~$ V_u = 4 $ either
\begin{enumerate}
\item
$ \omega_1(u) = 0 $, $ \omega_3(u) = 2 $, $ \omega(u^2-1) = 2 $, and $ s_u = 0 $,
or

\item
$ \omega_1(u) = 0 $, $ \omega_3(u) = 1 $, $ \omega(u^2-1) = 3 $, and $ s_u = 0 $.
\end{enumerate}
For~(2) we would have, in the notation of Corollary~\ref{2torscor},
that~$ S' = \emptyset $, $ D'=1 $, and the sole condition that~$ D \equiv 1 $
modulo~8 is fulfilled for at least two of the eight positive
divisors of~$ u^2-1 $; hence~$ s_u = 0 $ cannot hold simultaneously.

Thus only~(1) can occur. Here~$ u-1 $ and~$ u+1 $ are prime
numbers larger than~3, hence represent the classes of~1
and~2 modulo~3, leaving~$ u/4 $ to represent the class of~0.
So~3 divides~$ u $, and~$ u = 12 p $ with~$ p $ a prime number congruent
to~3 modulo~4. For such~$ u $ we have, in the notation of Corollary~\ref{2torscor},
that~$ S' = \emptyset $, $ D' = 1 $, and with~$ u - 1 \equiv 3 $ and~$ u + 1 \equiv 5 $
modulo~8, out of the four possible~$ D $ only~$ D = 1 $
satisfies~$ D \equiv 1 $ modulo~8, so that~$ s_u = 0 $ by Remark~\ref{selmertrivial},
hence~$ V_u = 4 $.
\end{proof}

\begin{exo} \label{Vu=4-ex}
For~$ u $ in~$ \{20, \dots, 25000 \} $ such that~$ (u/4)(u^2-1) $
is squarefree, we have~$ V_u = 4 $ 
for~$ u = 228 $, 1668, 3252, 4548, 8292, 8628, 9012,
10068, 12612, 17988, 18132 and~19428.
\end{exo}

\begin{remar} \label{minimalremark}
The proof of Proposition~\ref{Vu-prop} does not use any conjectures; in
all cases, we can decide whether or not~$ s_u = 0 $ using Corollary~\ref{2torscor} and Remark~\ref{selmertrivial}. But if we assume
the prediction~\eqref{prediction}, then we expect for~$ V_u = 4 $ that
either~$ \ord_2(q_u) = 2 $ and~$ \ord_2(\iota_{u,2}) = 0 $,
or~$ \ord_2(q_u) = 1 $ and~$ \ord_2(\iota_{u,2}) = 1 $.
Under the assumptions of the proposition, for~$ V_u = 4 $ we have~$ m_u = 2 $ in Corollary~\ref{new-index-cor},
so both options are open.
For the~$ u $ in Example~\ref{Vu=4-ex} we find numerically that~$ \ord_2(q_u) = 2 $,
which would then require~$ \ord_2(\iota_{u,2}) = 0 $.

Conversely, the prediction~\eqref{prediction} implies that~$ \ord_2(q_u) = 2 $ 
only occurs if~$ V_u = 4 $ and~$ \ord_2(\iota_{u,2}) = 0 $,
or~$ V_u = 5 $ and~$ \ord_2(\iota_{u,2}) = 1 $.
But for~$ u $ as in the proposition, amongst our data~$ \ord_2(q_u) = 2 $ only happens for the~$ u $
in Example~\ref{Vu=4-ex}, with~$ V_u = 4 $.

Thus, in our data, for such~$ u $ we have~$ \ord_2(q_u) = 2 $ \emph{precisely}
when~$ V_u = 4 $, which by Proposition~\ref{Vu-prop} is equivalent
to~$ u-1 $ and~$ u+1 $ being twin primes, and~$ u = 12 p $
with~$ p $ a prime congruent to~3 modulo~4. In fact, as explained
in the introduction, it was the philosophy that limiting the power of~2 in~$ q_u $
should limit the number of primes of bad reduction (which are the odd prime numbers dividing~$ u (u^2-1) $)
that gave rise to this paper.
\end{remar}

\begin{remar} \label{notsquarefree}
If we consider when~$ V_u = 3 $ or~4 for~$ u $ as in Proposition~\ref{BK2} then other possibilities for~$ u $
occur (but~$ V_u = 2 $ still only holds for~$ u = 4 $). Among our data~$ V_u = 3 $ also occurs for~$ u = 28 $ and~108, with~$ \ord_2(q_u) = 1 $,
and~$ V_u = 4 $ for~$ u = 15972 $ and~21492, with~$ \ord_2(q_u) = 2 $.
These are also the only additional values of~$ u $ with~$ \ord_2(q_u) = 1 $
or~2.
\end{remar}

We conclude this section exploring the variety in our data.

\begin{table}[t]\caption{\label{extratable}Special situations in the numerical data for Proposition~\ref{BK2} with~$ u $ in $ \{ 20,\dots,25000 \} $.}
\resizebox{!}{175pt}{
\begin{tabular}{|c|c|c|c|c|c|c|c|c|}
\hline
\vph $u$ &  $u/4$ & $u-1$ & $u+1$ & $ - N_u q_u $ & $ \hat s_u - \ord_2(\iota_{u,2})$ & $ s_u' $ & $ m_u - 1 $
 \\ \hline \vphs
 228 &  $ 3 \cdot 19 $ &  $ 227 $ &  $ 229 $ &  $ 2^{2} \cdot 3 \cdot 5^{4} $ & 0 & 0 & 1\rlap{${}^+$}
 \\ \hline \vphs
1668 &  $ 3 \cdot 139 $ &  $ 1667 $ &  $ 1669 $ &  $ 2^{2} \cdot 3^{2} \cdot 68023 $ & 0 & 0 & 1\rlap{${}^+$}
 \\ \hline \vphs
3252 &  $ 3 \cdot 271 $ &  $ 3251 $ &  $ 3253 $ &  $ 2^{2} \cdot 3 \cdot 5 \cdot 29 \cdot 9067 $ & 0 & 0 & 1\rlap{${}^+$}
 \\ \hline \vphs
4548 &  $ 3 \cdot 379 $ &  $ 4547 $ &  $ 4549 $ &  $ 2^{2} \cdot 3^{2} \cdot 1268759 $ & 0 & 0 & 1\rlap{${}^+$}
 \\ \hline \vphs
8292 &  $ 3 \cdot 691 $ &  $ 8291 $ &  $ 8293 $ &  $ 2^{2} \cdot 3 \cdot 61 \cdot 71 \cdot 5099 $ & 0 & 0 & 1\rlap{${}^+$}
 \\ \hline \vphs
8628 &  $ 3 \cdot 719 $ &  $ 8627 $ &  $ 8629 $ &  $ 2^{2} \cdot 3^{6} \cdot 98257 $ & 0 & 0 & 1\rlap{${}^+$}
 \\ \hline \vphs
9012 &  $ 3 \cdot 751 $ &  $ 9011 $ &  $ 9013 $ &  $ 2^{2} \cdot 3 \cdot 5^{2} \cdot 7 \cdot 11 \cdot 13903 $ & 0 & 0 & 1\rlap{${}^+$}
 \\ \hline \vphs
10068 &  $ 3 \cdot 839 $ &  $ 10067 $ &  $ 10069 $ &  $ 2^{2} \cdot 107381389 $ & 0 & 0 & 1\rlap{${}^+$}
 \\ \hline \vphs
12612 &  $ 3 \cdot 1051 $ &  $ 12611 $ &  $ 12613 $ &  $ 2^{2} \cdot 3 \cdot 59 \cdot 409 \cdot 3271 $ & 0 & 0 & 1\rlap{${}^+$}
 \\ \hline \vphs
15972\rlap{${}^\natural$} &  $ 3 \cdot 11^{3} $ &  $ 15971 $ &  $ 15973 $ &  $ 2^{2} \cdot 859 \cdot 4357 $ & 0 & 0 & 1\rlap{${}^+$}
 \\ \hline \vphs
17988 &  $ 3 \cdot 1499 $ &  $ 17987 $ &  $ 17989 $ &  $ 2^{2} \cdot 1487 \cdot 396953 $ & 0 & 0 & 1\rlap{${}^+$}
 \\ \hline \vphs
18132 &  $ 3 \cdot 1511 $ &  $ 18131 $ &  $ 18133 $ &  $ 2^{2} \cdot 3^{3} \cdot 17 \cdot 59 \cdot 79 \cdot 283 $ & 0 & 0 & 1\rlap{${}^+$}
 \\ \hline \vphs
19428 &  $ 3 \cdot 1619 $ &  $ 19427 $ &  $ 19429 $ &  $ 2^{2} \cdot 3^{3} \cdot 11^{2} \cdot 283 \cdot 859 $ & 0 & 0 & 1\rlap{${}^+$}
 \\ \hline \vphs
21492\rlap{${}^\natural$} &  $ 3^{3} \cdot 199 $ &  $ 21491 $ &  $ 21493 $ &  $ 2^{2} \cdot 3^{2} \cdot 773 \cdot 17449 $ & 0 & 0 & 1\rlap{${}^+$}
 \\ \hline \hline \vphs
22660 &  $ 5 \cdot 11 \cdot 103 $ &  $ 3 \cdot 7 \cdot 13 \cdot 83 $ &  $ 17 \cdot 31 \cdot 43 $ &  $ 2^{25} \cdot 3 \cdot 43 $ & 16 & 4 & 0
 \\ \hline \vphs
2716 &  $ 7 \cdot 97 $ &  $ 3 \cdot 5 \cdot 181 $ &  $ 11 \cdot 13 \cdot 19 $ &  $ 2^{20} \cdot 3^{2} $ & 13 & 3 & 0
 \\ \hline \vphs
11452 &  $ 7 \cdot 409 $ &  $ 3 \cdot 11 \cdot 347 $ &  $ 13 \cdot 881 $ &  $ 2^{20} \cdot 3 \cdot 173 $ & 14 & 2 & 1
 \\ \hline \vphs
20460 &  $ 3 \cdot 5 \cdot 11 \cdot 31 $ &  $ 41 \cdot 499 $ &  $ 7 \cdot 37 \cdot 79 $ &  $ 2^{20} \cdot 3^{3} \cdot 5 \cdot 29 $ & 12 & 2 & 0
 \\ \hline \vphs
20596 &  $ 19 \cdot 271 $ &  $ 3 \cdot 5 \cdot 1373 $ &  $ 43 \cdot 479 $ &  $ 2^{20} \cdot 3^{2} \cdot 7 \cdot 47 $ & 15 & 3 & 0
 \\ \hline \hline \vphs
52 &  $ 13 $ &  $ 3 \cdot 17 $ &  $ 53 $ &  $ 2^{5} \cdot 3 $ & 2 & 1 & 1
 \\ \hline \vphs 
772 &  $ 193 $ &  $ 3 \cdot 257 $ &  $ 773 $ &  $ 2^{9} \cdot 3^{2} \cdot 7^{2} $ & 6 & 2 & 1
 \\ \hline \vphs 
1732 &  $ 433 $ &  $ 3 \cdot 577 $ &  $ 1733 $ &  $ 2^{9} \cdot 3^{2} \cdot 509 $ & 6 & 2 & 1
 \\ \hline \vphs 
2308 &  $ 577 $ &  $ 3 \cdot 769 $ &  $ 2309 $ &  $ 2^{10} \cdot 3^{3} \cdot 5 \cdot 37 $ & 7 & 2 & 1
 \\ \hline \vphs
19212 &  $ 3 \cdot 1601 $ &  $ 19211 $ &  $ 19213 $ &  $ 2^{7} \cdot 3^{3} \cdot 7^{2} \cdot 19 \cdot 977 $ & 4 & 1 & 1
 \\ \hline \vphs
19252\rlap{${}^\natural$} &  $ 4813 $ &  $ 3^{3} \cdot 23 \cdot 31 $ &  $ 13 \cdot 1481 $ &  $ 2^{13} \cdot 7^{2} \cdot 19 \cdot 37 $ & 8 & 3 & 1
 \\ \hline \hline \vphs
340 &  $ 5 \cdot 17 $ &  $ 3 \cdot 113 $ &  $ 11 \cdot 31 $ &  $ 2^{6} \cdot 3 \cdot 7 \cdot 17 $ & 0 & 0 & 0\rlap{*}
 \\ \hline \vphs
1508 &  $ 13 \cdot 29 $ &  $ 11 \cdot 137 $ &  $ 3 \cdot 503 $ &  $ 2^{6} \cdot 3 \cdot 7517 $ & 0 & 0 & 0\rlap{*}
 \\ \hline \vphs
7540 &  $ 5 \cdot 13 \cdot 29 $ &  $ 3 \cdot 7 \cdot 359 $ &  $ 7541 $ &  $ 2^{8} \cdot 31 \cdot 23663 $ & 0 & 0 & 1\rlap{${}^+$}
 \\ \hline \vphs
12180 &  $ 3 \cdot 5 \cdot 7 \cdot 29 $ &  $ 19 \cdot 641 $ &  $ 13 \cdot 937 $ &  $ 2^{8} \cdot 3 \cdot 5 \cdot 17 \cdot 71 \cdot 193 $ & 0 & 0 & 1\rlap{${}^+$}
 \\ \hline \vphs
12932\rlap{${}^\natural$} &  $ 53 \cdot 61 $ &  $ 67 \cdot 193 $ &  $ 3^{3} \cdot 479 $ &  $ 2^{6} \cdot 5 \cdot 250673 $ & 0 & 0 & 0\rlap{*}
\\ \hline \hline \vphs
5612 &  $ 23 \cdot 61 $ &  $ 31 \cdot 181 $ &  $ 3 \cdot 1871 $ &  $ 2^{16} \cdot 3 \cdot 349 $ & 11 & 1 & 0
 \\ \hline
\end{tabular}%
}%
\end{table}%

\begin{exo} \label{exo-special}
In the five parts of Table~\ref{extratable} we give some examples
of the following special situations in our data for~$ u = 20, \dots 25000 $.
\begin{enumerate}
\item
$ \ord_2(q_u) $ takes the minimal value $2$ (subject to the exclusion of $u=4$,
for which it is~2, as well as~$u=12$, 28, and~108, for which it is~1). This is attained for the~$ u $ in Example~\ref{Vu=4-ex}
and Remark~\ref{notsquarefree}.

\item
$ \ord_2(q_u) $ is maximal.
The largest value is~25, for~$ u=22660 $,
and the next largest value is~20, which occurs for~$ u = 2716 $,
11452, 20460 and~20596.

\item
$ (u/4) (u^2-1) $ has few prime factors but $ \ord_2(q_u) $ is
relatively large. Here the Tamagawa factors contribute little to it
and $ s_u' $ is small, but $ \hat s_u $ is large.

\item
$ s_u' = 0 $, hence~$ s_u = 0 $, but there are relatively many primes of bad reduction.
These primes make~$\ord_2(q_u)$ relatively large, via Tamagawa factors, but
do not force the 2-Selmer group to be large too. (Alternatively,
it is predicted to be cyclic of order~4 for some~$ u $,
including~6060 and 10660, where there are~8 or~9 primes of bad reduction.)
This is in contrast to the situation for the real quadratic fields $\Q(\sqrt{u^2-4})$ discussed in the introduction, where the $2$-parts of the class number and $\frac{L'(\chi_D,0)}{F_1(u)}$ are equal, bounded below by $2^{r-2}$, where $r$ is the number of ramified primes. There, there was no contribution from Tamagawa factors.

\item
$ s_u' = 1 $ but~$ \hat s_u $ is large, so the Selmer group 
should be cyclic of large order.
\end{enumerate}
\end{exo}

\end{document}